\numberwithin{equation}{section}
\providecommand{\abs}[1]{\left\vert#1\right\vert}
\providecommand{\norm}[1]{\left\Vert#1\right\Vert}
\providecommand{\pnorm}[2]{\left\Vert#1\right\Vert_{L^{#2}}}
\providecommand{\pnormspace}[3]{\left\Vert#1\right\Vert_{L^{#2}(#3)}}
\providecommand{\Rn}[1]{\mathbb{R}^{#1}}
\providecommand{\csubset}{\subset\subset}
\def\wstar{\overset{*}{\rightharpoonup}}
\def\nab{\nabla}
\def\dt{\partial_t}
\def\hal{\frac{1}{2}}
\def\lep{\lambda_\varepsilon}
\def\ep{\varepsilon}
\def\ale{\abs{\log \ep}}
\def\a{\alpha}
\def\({\left(}
\def\){\right)}
\def\l|{\left|}
\def\r|{\right|}
\def\ep{\varepsilon}
\def\mr{\mathbb{R}}
\def\mc{\mathbb{C}}
\def\p{\partial}
\def\xib{\xi_0}
\def\hb{h_0}
\def\fb{\psi_0}
\def\xb{X_0}
\def\phib{\phi_0}
\def\lep{\ale}
\def\nab{\nabla}
\def\om{\Omega}
\def\io{\int_{\Omega}}
\def\bo{\partial \Omega}
\def\hal{\frac{1}{2}}
\def\rest{\hskip 1pt{\hbox to 10.8pt{\hfill\vrule height 7pt width 0.4pt depth 0pt\hbox{\vrule height 0.4pt
width 7.6pt depth 0pt}\hfill}}}
\def\evalu{\hskip 1pt{\hbox to 2pt{\hfill \vrule height -6pt width 0.4pt depth0pt}}}
\DeclareMathOperator{\curl}{curl}
\DeclareMathOperator{\diverge}{div}
\DeclareMathOperator{\supp}{supp}
\DeclareMathOperator{\dist}{dist}
\newtheorem{lem}{Lemma}[section]
\newtheorem{cor}[lem]{Corollary}
\newtheorem{prop}[lem]{Proposition}
\newtheorem{thm}[lem]{Theorem}
\newtheorem{remark}[lem]{Remark}
\title{Ginzburg-Landau vortex dynamics with pinning and strong applied currents}
\author{Sylvia Serfaty\footnote{Supported by an EURYI Award}\, and Ian Tice\footnote{Supported by an NSF
Postdoctoral Research Fellowship}}
\begin{document}

\maketitle

\begin{abstract}
We study a  mixed heat and Schr\"odinger  Ginzburg-Landau evolution equation  on a bounded two-dimensional domain  with   an electric current applied on the boundary and a pinning potential term. This is meant to model a superconductor subjected to an applied electric current and electromagnetic field and containing impurities. Such a current is expected to  set the vortices in motion, while the pinning term drives them toward minima of the pinning potential and ``pins'' them there.  We derive the limiting dynamics of a finite number of vortices  in the limit of a large  Ginzburg-Landau parameter, or  $\ep \to 0$, when the intensity  of the electric current and applied magnetic field on the boundary scale like $\lep$.    We show that the limiting velocity of the vortices is the sum of a   Lorentz force, due to the current, and a pinning force. We state an analogous result for a model Ginzburg-Landau equation without magnetic field but with forcing terms.   Our proof provides a unified approach to various proofs of dynamics of Ginzburg-Landau vortices.
\end{abstract}

\noindent
{\bf keywords: } Ginzburg-Landau, vortices, vortex dynamics,  pinning, critical current\\
{\bf MSC classification: } 35Q99, 35B30, 35B99

\section{Introduction}

\subsection{The model}
In this paper we study the  dynamics of vortices in a superconductor with applied magnetic field  and electric current in addition to  possible pinning effects, under the following 
 mixed heat plus  Schr\"{o}dinger (or complex) flow in a bounded  two-dimensional domain:
\begin{equation}\label{tdgl}
 \begin{cases}
  (\alpha + i \beta \ale ) (\dt u + i\Phi u) = \Delta_{A} u + \frac{u}{\ep^2} (b-\abs{u}^2) & \text{in }\Omega \\
  \sigma (\dt A+ \nab \Phi) = \nab^\bot h + (iu,\nab_{A} u) & \text{in }\Omega \\
  h = H_{ex} & \text{on } \partial \Omega \\
  \nab_{A} u \cdot \nu = i u J_{ex} \cdot \nu & \text{on }\partial \Omega.
 \end{cases}
\end{equation}  
Here $\om$ is the bounded two-dimensional domain representing the region occupied by the superconducting sample. The unknown functions are the triple $(u, A,\Phi)$, where $u: \om \to \mc$ is the ``order parameter,'' $A: \om \to \mr^2$ is a vector potential of the magnetic field, itself given by $h:=\curl A$, and $\Phi: \om \to \mr$ is the scalar potential associated to the electric field, itself given by $E:= -(\dt A+ \nabla \Phi)$.  This is a gauge theory, i.e. $(u, A, \Phi)$ are only known up to gauge-transformations of the form $u \mapsto u e^{i \xi}$, $A \mapsto A + \nabla \xi$, $\Phi \mapsto \Phi - \dt \xi$ where $\xi$ is smooth. The covariant gradient $\nabla_A$ denotes $\nabla - i A$.  For a vector $X \in \Rn{2}$ we write $X^\bot = (-X_2,X_1)$, and for the perpendicular gradient we write $\nab^\bot h = (\nab h)^\bot$.  Also,  $( \cdot, \cdot )$ denotes the scalar product in $\mathbb{C}$ defined by $(a, b)= \mathcal{R}(a) \mathcal{R}(b)+ \mathcal{I}(a) \mathcal{I}(b)$, and $(a, X)$ for $a\in \mathbb{C}$ and  $X\in \mathbb{C}^2$ stands for the  vector in $\mr^2 $ with components $(a, X_1) $ and $(a, X_2)$.

The function $b(x)$ is interpreted as a pinning potential. We assume that $b:\bar{\Omega} \rightarrow \Rn{}$ is a smooth function satisfying
\begin{equation}\label{b_lower_bound}
  0 < \inf_{x \in \Omega} b(x) \le b(x)  \le \sup_{x\in \Omega} b(x) < \infty.
\end{equation} 
The situation without pinning corresponds to the case $b\equiv 1$.

Let us explain the meaning  of the various parameters in the equation.
We assume $\alpha>0$, $\sigma >0$, and $\beta\in \mr$. When $\beta=0$ and $b \equiv 1$, these equations are the Gorkov-Eliashberg system (see  \cite{ge}), which are the standard gauge-invariant heat flow version of the Ginzburg-Landau equation. The case $\a=0$, $\beta>0$ would correspond to a pure gauge-invariant Schr\"odinger flow. Here, for the sake of generality, we consider  $\a>0$ and $\beta$ real, which corresponds to a mixed flow or ``complex Ginzburg-Landau,''  also commonly considered in the modeling of superconductivity \cite{dorsey,kopnin}.

The parameter $\ep>0$ is  a small parameter, equal to the inverse of $\kappa$, the ``Ginzburg-Landau parameter'' in superconductivity, which is a material constant defined as the ratio between two characteristic length scales. We will be interested in the asymptotic limit $\ep \to 0$, corresponding to ``extreme type-II superconductors.''  The parameter $\sigma$ is called the conductivity.  Note that the parameters $\a, \beta, \sigma$ as well as the function $b(x)$ are assumed to be independent of $\ep$.

The boundary conditions are what make this equation quite specific: they are meant to account for an applied normal current, as well as an applied magnetic field.
 To account for an incoming flow  of normal current, the applied (or exterior) field $H_{ex}$ has to be inhomogeneous  on the boundary. Then the  incoming current $I_{ex}$ is given by the static Maxwell equation \begin{equation}\label{static_maxwell}
 \nab^\bot H_{ex} = -I_{ex}.
\end{equation}
These vector fields are defined a priori in the whole $\Omega^c$ (complement of $\Omega$), but  only the data of $H_{ex}$ on the boundary is needed in the equation. The data only provide the information of $I_{ex}\cdot \nu$, which is relevant as the normal component of current. The vector field $J_{ex}$ has the same nature  as an incoming current; it can serve to model an applied voltage or surface charges. For a  discussion of these choices of boundary conditions, we refer to Tice \cite{tice_2}, where they were introduced and justified.  Note that the more common situation of the equation with applied magnetic field of intensity $h_{ex}$ on the boundary but no ``applied current'' can be retrieved by setting $H_{ex}= h_{ex}$ (spatially constant) on $\bo $ (then $I_{ex}=0$) and $J_{ex}=0$. To simplify the dependence on $\ep$ we make the structural assumptions that 
\begin{equation}
  H_{ex} =  \ale H, \text{ and } J_{ex} = \ale J
\end{equation}
for $H, J $ smooth  from $\partial \Omega $ to $\Rn{}$, $\Rn{2}$ respectively. 
In \cite{tice_2}  the case where $J_{ex}=J$ and $H_{ex}=H$ are independent of $\ep$ was treated. Some  larger applied fields and currents were also treated, but the arguments could not be extended to fields as strong as the $\ale$ scale that we consider here.  As a result of the presence of these boundary conditions,  the dynamics are no longer dissipative (or even conservative), i.e. the energy of the system can increase in time.  We have also added the presence of the pinning weight $b$ and the mixed flow, which were not considered in \cite{tice_2}. This adds more generality, but is also quite relevant for the modeling. To explain this let us mention more details of the physics.

Superconductors are particular alloys that lose their resistivity when below a critical temperature, allowing for permanent supercurrents  that circulate without loss of energy.  However, in the presence of  applied fields or currents,  point vortices may appear:  these  can be seen as the  zeros of the order parameter, which all carry  an integer topological charge called degree.  As $\ep \to 0$ the vortices become point-like.   

When a current is applied, it flows through the superconductor,  inducing a Lorentz force that makes the vortices  move, which in turn disrupts the flow of the permanent supercurrents.  This is an important problem in practical applications, where a steady flow of supercurrent is essential.  To counter this effect, a common technique is to introduce impurities in the material, which create ``pinning sites'' that pin down the vortices and prevent them from moving, at least when the pinning is strong enough relative to current. The impurities are modeled by the nonconstant function $b(x)$, with the effect being that vortices  are attracted to the local minima of $b$.  One then wishes to understand at which point the current-induced Lorentz force is strong enough to unpin the vortices and set them in motion; such a current is known as the ``critical current'' in the physics literature.  For a deeper discussion of pinning and critical currents, and of the physics in general, we refer to \cite{tinkham,campbell,chap_her,vinokur} and the references therein.

As we shall explain in Section \ref{glsimple}, the analysis we develop here can also be applied to treat the simpler model equation, which we call ``Ginzburg-Landau with forcing,'' given by 
 \begin{equation}
\label{eqsimpl}
(\a+ i \lep \beta)  \dt u_\ep = \Delta u_\ep + \frac{u_\ep}{\ep^2}(1-|u_\ep|^2) +  \nab h  \cdot \nab u_\ep +  2 i\lep Z \cdot 
\nab u_\ep+f_\ep u_\ep
\end{equation}
with, say, homogeneous Neumann or Dirichlet boundary condition.  Here we will assume that $Z$, $h$, and $f_\ep$ are given smooth functions from $\Omega$ into $\Rn{2}$, $\Rn{}$, and $\Rn{}$, respectively (note that despite the slightly confusing notation, 
 $Z$ and $h$ have nothing to do with  the  currents or  magnetic fields of \eqref{tdgl}).  In fact, the first step for studying \eqref{tdgl} is to make a change of unknown  functions that ``removes'' the boundary condition while transforming the equation into one similar to \eqref{eqsimpl}.

We note that the well-posedness of the Cauchy problem for \eqref{tdgl} as well as \eqref{eqsimpl} can easily be shown by adapting the arguments of \cite{tice_2}.

\subsection{Previous work}

There have been numerous works on the dynamics of a finite number of vortices in various flows for Ginzburg-Landau. Each time the goal is to derive the limiting law, as $\ep \to 0$, for motion of the $n$ vortex points, i.e. a system of $n$ coupled ODEs, at least before the first time of collision of the vortices under that law.

The first results  of this type were  those of Lin \cite{lin_1} and Jerrard-Soner \cite{js} for the heat flow of Ginzburg-Landau 
without magnetic field (i.e. no gauge $A, \Phi$):
\begin{equation}\label{heat}
\dt u=\Delta u + \frac{u}{\ep^2}(1-\abs{u}^2),\end{equation} and with fixed Dirichlet boundary condition,
then Jerrard-Colliander \cite{jc} treated the corresponding Schr\"odinger dynamics. This required
the well-preparedness assumption $E_\ep(u)\le \pi n \lep +O(1)$ (resp. $\pi n \lep +o(1)$ for Schr\"odinger)  where $E_\ep$ is the Ginzburg-Landau energy without magnetic field and  $n$ is the initial number of vortices,  all of which have degree $\pm 1$. After accelerating time by the factor $\lep$, the limiting dynamical law of vortices  as $\ep \to 0$  under the heat flow \eqref{heat} is 
\begin{equation}
\dot{a}_i= - \nab_i W(a_1, \dotsc, a_n) 
\end{equation}
(respectively $\dot{a}_i^\bot= - \nab_i W(a_1, \dotsc, a_n)$ in the Schr\"odinger case and without accelerating time)  where $W$ is a function of interaction between the vortices called the ``renormalized energy,'' introduced in \cite{bbh}.  The case of the heat flow for the gauged equations with spatially homogeneous applied magnetic field was treated by Spirn  in \cite{spirn} for small fields and  Sandier-Serfaty \cite{ss_gamma} for larger fields. This corresponds to setting $b\equiv 1$, $\beta=0$, $H_{ex}=h_{ex}$ and $J_{ex}=0$ in \eqref{tdgl} above and scaling $h_{ex}$ with $\ep$.
Complex  flows started to attract attention recently: the dynamics were derived for the Ginzburg-Landau equation without magnetic field by Miot \cite{miot} in the case of the whole plane, and Kurzke-Melcher-Moser-Spirn \cite{kurzkespirn} in a bounded domain,  and for the  gauged equation   by Kurzke-Spirn in \cite{kurzkespirngauge}.

In the case of pinning, the only complete, rigorous results are due to Lin \cite{lin_2}, who derived the vortex dynamics without gauge and with a different model of pinning than we consider.  Indeed, \cite{lin_2} derives the vortex  dynamics from the equation 
\begin{equation}
\dt u = \frac{1}{b} \diverge(b \nab u) + \frac{u(1-\abs{u}^2)}{\ep^2}. 
\end{equation}
For our specific pinning model there are no complete rigorous results, only partial results by Jian-Song \cite{jiangsong} in the case without gauge.  They correctly guess the limiting dynamical law, and they show that if any vortices persist, then they must concentrate their energy near the vortex paths, which provides evidence of pinning.  However, they do not prove that the original vortices actually persist in time or that no new vortices nucleate, and without this information we do not see how they can fully derive the vortex motion law.  These papers were preceded by formal results by Chapman-Richardson  \cite{cr} on the motion law of a   vortex line in three dimensions for the full magnetic model.

There are only a few results available in the mathematics literature for the problem with applied current, and none consider the effect of pinning.  The case  $I_{ex} \neq 0$ and $J_{ex}=0$ was studied in  \cite{chap_her, du_2, du_3, du_gray}, where  numerical and formal asymptotic results established evidence of current-induced Lorentz forcing in the vortex dynamics.  A stability analysis of the normal state ($u=0$) in a model with  applied current and mixed Dirichlet-Neumann boundary conditions was performed in \cite{almog}.   For a 1-D model of a superconducting wire with applied current, the existence of time-periodic solutions was studied in \cite{rub_stern_zum}.  The rigorous study of the vortex dynamics with boundary current (either $I_{ex}$ and $J_{ex}$)  was completed for the first time in \cite{tice_2}.

Our study here also provides a relatively simple and unified approach to several of the situations mentioned above, combining  several  ingredients (pinning, applied  current and field, mixed flow). We will present the method of proof in Section \ref{glsimple}.

All the studies mentioned above make some ``well-preparedness'' assumption, i.e. assume that the initial energy is not larger than $\pi n\lep$ (in some weaker or stronger form) where $n$ is the initial number of vortices.   The resulting dynamical laws remain valid only for as long as the vortices do not collide or exit the domain.  A much  subtler analysis is required to lift these assumptions and extend the dynamics past collision times. This has been done only in the case of the heat flow without gauge in the series of papers \cite{bos1,bos2} for dynamics in $\Rn{2}$ and \cite{serf_1,serf_2} for a bounded domain.  In the present paper we will not attempt such an analysis.

\subsection{Main result and interpretation}

Before stating the main result we need to introduce various  auxiliary functions with respect to which we make our change of unknown functions.
Let $\phib$ be the solution to 
\begin{equation}\label{phi_def}
 \begin{cases}
  -\sigma \Delta \phib + \alpha b \phib = 0 & \text{in }\Omega \\
   \nab \phib \cdot \nu = \sigma^{-1}(b J - I )\cdot \nu & \text{on } \partial \Omega.
 \end{cases}
\end{equation} 
Note that $\phib$ exists, is unique, and is smooth.  Also, $-I \cdot \nu = - \nab H \cdot \tau$, for $\tau=\nu^\bot$ the unit tangent on $\partial \Omega$. We will work in a fixed gauge in which $\Phi_\ep = \ale \phib$. 
 
Define $\hb:\Omega \rightarrow \Rn{}$ to be the solution to the PDE 
\begin{equation}\label{h_def}
\begin{cases}
  -\diverge\left(\frac{\nab \hb}{b}\right) +  \hb = -\sigma \nab^\bot \frac{1}{b}\cdot \nab \phib & \text{in }\Omega \\
   \hb  = H & \text{on } \partial \Omega.
 \end{cases}
\end{equation}
Again, $\hb$ exists, is unique, and is smooth.  Define $\xib:\Omega \rightarrow \Rn{}$ to be the solution to the PDE
\begin{equation}\label{k_def}
\begin{cases}
  \Delta \xib = \hb & \text{in }\Omega \\
   \xib  = 0 & \text{on } \partial \Omega.
 \end{cases}
\end{equation}
Then we define $\xb:\Omega \rightarrow \Rn{2}$ via
\begin{equation}\label{X_def}
 \xb = \nab^\bot \xib,
\end{equation}
which implies that 
\begin{equation}\label{propeX}
\begin{cases}
\curl{\xb}=\hb,  \quad \diverge{\xb}=0   & \text{in }  \Omega\\
 \xb \cdot \nu =0,  \quad\curl{\xb} = H & \text{on } \partial \Omega.
\end{cases}
\end{equation}
Finally, define $\fb:\Omega \rightarrow \Rn{}$ to be the solution to the PDE 
\begin{equation}\label{f_def}
\begin{cases}
  \Delta \fb = \diverge\left( \frac{\sigma \nab \phib - \nab^\bot \hb}{b} \right) & \text{in }\Omega \\
   \nab \fb \cdot \nu  = J\cdot \nu & \text{on } \partial \Omega.
 \end{cases}
\end{equation}
The PDE is well-posed because it satisfies the necessary compatibility condition (see \eqref{f_wp} for a more precise statement).  The functions $\fb$ and $\xb$ are chosen in this way so that $\sigma \nab \phib - \nab^\bot \curl{\xb} = b(\nab \fb - \xb)$ (see Lemma \ref{X_f_relation} for proof).  Note that all these functions depend only on the parameters of the problem ($b$, $\a, \beta, \sigma, J, H$) and on the domain $\om$, but not on $\ep$.  The $0$ index is used  to emphasize this fact.

We transform the equations by making the change of unknown functions \begin{equation}\label{changefunctions}
v_\ep= \frac{u_\ep}{\sqrt{b}} e^{-i \lep \fb} \qquad B_\ep= A_\ep - \lep \xb.
\end{equation}
The energy of $(v, B)$ is defined by 
\begin{equation}\label{F}
F_\ep(v,B) = \hal \io b |\nab_B v|^2 + \frac{b^2}{2\ep^2} (1-\abs{v}^2)^2 + |\curl B|^2. \end{equation}
However, the energy conditions are better  phrased on the modified energy, which we will see only differs from $F_\ep(v, B)$ by a term that is $o(1)$:
\begin{multline}
\tilde{F}_\ep(v,B) := \int_\Omega 
\hal \( b \abs{\nab_B v}^2 + \frac{b^2}{2\ep^2} (1-\abs{v}^2)^2 + \abs{\curl B}^2 \) \\
+ 
 \int_\Omega \frac{(\abs{v}^2-1)}{2}(b \lep^2 \abs{\nab \fb- \xb}^2 - \sqrt{b} \Delta \sqrt{b} - \beta \ale^2
 b \phib )+ \int_{\partial \Omega} \frac{(\abs{v}^2-1)}{4} \nab b \cdot \nu.
\end{multline}

 Finally, we need to introduce the vorticity measure, or Jacobian, of the configuration. Following for example \cite{ss_prod}, let us introduce the ``space-time Jacobian'' of a configuration $(v, B)$ by setting  
\begin{equation}
\mathcal{J} = d \( (iv, \partial_t  v  )dt +(iv, d_B v) + B \)
\end{equation}
in the language of  differential forms, where $d_B $ denotes $d_{space}+ iB$ with $B$ regarded as a one-form, $d_{space}$ is the exterior derivative with respect to the spatial variables only, and $d$ is the full space-time exterior derivative.
Writing  $\mathcal{J} = V_1 \, dx_1 \wedge dt + V_2  d x_2 \wedge dt + \mu dx_1 \wedge dx_2$, we identify the spatial Jacobian (or vorticity) 
\begin{equation}\label{mu_def}
\mu(v,B)= \curl \((iv, \nab_B v)+B\) 
\end{equation}
and the velocity  vector field $V=(V_1, V_2)$
\begin{equation}\label{V_def}
V( v, B)= \nabla (iv, \partial_t v) + \partial_t (iv, \nab_B v) - \partial_t B.
\end{equation}
Since $d\circ d=0$ it holds that $d\mathcal{J}=0$, which implies the continuity equation 
\begin{equation}
\partial_t \mu + \curl  V=0. 
\end{equation}
The vorticity $\mu$ and the velocity vector field $V$ are  typically  measures concentrated at the vortices.  For example, if $\mu(t)= 2\pi \delta_{\gamma(t)}$,  then $V(t)= 2\pi \dot{\gamma}^\perp(t) \delta_{\gamma(t)}$, which shows that in reality $V$ encodes the perpendicular to the actual velocity of the vortex.

We say that  the solution is well-prepared if
\begin{equation}\label{well_prepared_def}
\left\{\begin{array}{rl}
 \mu(v_\ep,B_\ep)(0) & \rightarrow 2\pi \sum_{i=1}^n d_i \delta_{a_i^0} \text{ for }d_i = \pm 1,  \\ [2mm]
\frac{1}{\ale} \tilde{F}_\ep (v_\ep,B_\ep)(0) &= \sum_{i=1}^n \pi b(a_i^0) + o(1). 
\end{array}\right.
\end{equation} where the $a_i^0$ are distinct points.
It is known since \cite{bbh} that each vortex carries an energy of at least $\pi \lep$.  With pinning this easily translates into the following estimate:  if $\mu(u_\ep, A_\ep) \to 2\pi \sum_{i=1}^n d_i \delta_{a_i}$ 
with $d_i =\pm 1$ and $a_i\in \Omega$ distinct points, then 
\begin{equation}\label{gcv}
\tilde{F}_\ep(u_\ep, A_\ep) \ge \pi \sum_{i=1}^n b(a_i) \lep +o(\lep).
\end{equation} 
We will refer to this as the ``standard $\Gamma$-convergence result,'' but we will not prove it. 
It can be easily shown by adapting results in the literature  (see for example \cite{ass}).  Because of the lower bound \eqref{gcv} it is essential that we assume \eqref{b_lower_bound} so that the  minimal energetic cost of a vortex is $\pi (\inf b) \ale > C \ale$, which is not $o(\ale)$.   Without this assumption, our techniques would be unable to control the number of vortices.

Notice that the  well-preparedness conditions \eqref{well_prepared_def} amount to requiring equality in \eqref{gcv}.  This form of well-preparedness is  relatively weak in comparison to those commonly found in the literature, which usually require bounds by $\pi n \lep +O(1)$.  Again, it is easy to adapt results in the literature to show that, given any set of $n$ distinct points $a_i \in \Omega$,  initial data satisfying \eqref{well_prepared_def} can be constructed.

Our main result on \eqref{tdgl} is
\begin{thm}\label{dynamics-intro}
Let $(u_\ep, A_\ep, \Phi_\ep)$ be solutions to \eqref{tdgl} and choose the gauge $\Phi_\ep = \ale \phib$. Assume the initial data for the solutions are  well-prepared in the sense that \eqref{well_prepared_def} holds, where $(v,B)$ is given by \eqref{changefunctions}.
Then there exists a  time $T_*>0$  and $n$ continuously differentiable functions $a_i : [0,T_*)\to \om$ such that 
the initial vortices move along the trajectories $a_i$ (i.e. $\mu(v_\ep, B_\ep)(t) \to \mu(t)= 2\pi \sum_i d_i \delta_{a_i(t)}$)
and 
\begin{equation}\label{dynalaw}
\begin{cases}
\alpha \dot{a}_i + d_i \beta \dot{a}_i^\bot =  -2 d_i\( \nab^\bot \fb(a_i) - \xb^\bot (a_i)  \) - \nab \log b(a_i)\\
a_i(0)=a_i^0.\end{cases}
\end{equation} 
Moreover, $T_*$ is the smaller of the first  collision time and domain exit time of vortices under this law.  In addition, for all $t \in [0, T_*)$ we have 
\begin{equation}
\lim_{\ep \to 0} \frac{\tilde{F_\ep}(v_\ep, B_\ep) (t)}{\lep} = \pi \sum_{i=1}^n b(a_i(t)). 
\end{equation}
\end{thm}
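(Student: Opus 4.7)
The plan is to first exploit the change of variables \eqref{changefunctions} to reduce the boundary-current system \eqref{tdgl} to a ``forced Ginzburg-Landau'' equation in the spirit of \eqref{eqsimpl}, but with \emph{homogeneous} boundary data. A direct substitution, using the identity $\sigma\nab\phib-\nab^\bot\curl\xb=b(\nab\fb-\xb)$, shows that $(v_\ep,B_\ep)$ satisfies a transformed system in which the applied current enters only through smooth drift terms built from $\fb$ and $\xb$ (scaled by $\ale$), while the boundary conditions collapse to homogeneous Neumann-type ones. From this point on one works on a purely interior problem, and essentially the same argument will handle \eqref{eqsimpl}.

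The core tool is a sharp energy identity for $\tilde{F}_\ep(v_\ep,B_\ep)$. Multiplying the transformed $v$-equation by $b(\dt v_\ep+i\ale\phib v_\ep)$ and the $B$-equation by $\dt B_\ep+\ale\nab\phib$ and integrating by parts over $\om$ --- the multipliers chosen precisely so that the $\beta$-contribution cancels via the antisymmetry $(iX,X)=0$ --- yields
\[
\frac{d}{dt}\tilde{F}_\ep(v_\ep,B_\ep)+\a\io b\,\abs{\dt v_\ep+i\ale\phib v_\ep}^2+\sigma\io\abs{\dt B_\ep+\ale\nab\phib}^2=R_\ep(t),
\]
where the remainder $R_\ep$ is a pairing of the space-time velocity $V(v_\ep,B_\ep)$ against the smooth driving fields $\nab^\bot\fb-\xb^\bot$ and $\nab\log b$. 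Integrating in time gives one-sided control of the dissipation in terms of the initial energy and the driving fields.

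Third, I would combine this identity with the Sandier-Serfaty product estimate and with the $\Gamma$-convergence lower bound \eqref{gcv}. Because the well-preparedness assumption \eqref{well_prepared_def} saturates \eqref{gcv} at $t=0$, the product estimate forces, for every $t\in[0,T_*)$, the Jacobian $\mu(v_\ep,B_\ep)(t)/\ale$ to remain concentrated at exactly $n$ points $a_i(t)$ with the original degrees, the rescaled velocity $V(v_\ep,B_\ep)/\ale$ to converge weakly to $2\pi\sum_i d_i\dot{a}_i^\bot\delta_{a_i(t)}$, and the product estimate itself to be saturated. Passing to the limit in $R_\ep/\ale$ produces the Lorentz-type force $-2d_i(\nab^\bot\fb-\xb^\bot)(a_i)$ out of the $\ale$-scaling of the drift, and the pinning force $-\nab\log b(a_i)$ out of the weight. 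Equating the two expressions for the limit of $V_\ep/\ale$ yields the motion law \eqref{dynalaw}, with the $(\a,\beta)$-structure on the left-hand side inherited from the mixed flow in \eqref{tdgl}.

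Finally one closes the loop by a bootstrap: $T_*$ is the first collision or boundary-exit time of the (locally well-posed) ODE \eqref{dynalaw}, and on $[0,T_*)$ the matching of the integrated energy identity with the $\Gamma$-lower bound propagates well-preparedness, which in turn precludes the nucleation of additional vortices and yields the claimed energy convergence. The main obstacle will be twofold: extending the product estimate to the present setting, where the pinning weight $b$, the mixed flow, and the $\ale^2$-sized drift potential $\ale^2\abs{\nab\fb-\xb}^2$ --- of the \emph{same} order as the leading vortex energy --- all appear simultaneously; and keeping the non-dissipative $\beta$-terms from spoiling the matching. The latter is taken care of by the algebraic cancellation in the energy identity, while the former is precisely what forces the introduction of the modified energy $\tilde{F}_\ep$ and the choice of multipliers above, so that every $O(\ale^2)$ inhomogeneity is absorbed into the smooth fields $\fb$, $\xb$, $b$ before one applies the product estimate.
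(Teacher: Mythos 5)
Your proposal captures the right opening moves --- the change of variables removing the boundary data, the modified energy $\tilde F_\ep$ and its dissipation identity, the Sandier--Serfaty product estimate, and the $\Gamma$-convergence lower bound for well-preparedness propagation --- but it is missing the single most important tool in the paper's derivation of the motion law, and the step you describe as ``passing to the limit in $R_\ep/\ale$'' cannot actually produce what you claim.

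The key gap is the absence of the \emph{stress-energy tensor}. The energy identity you write down (Lemma \ref{mod_en_evolve} in the paper) produces on the right-hand side only the pairing $\int_\Omega b\, Z_\ep \cdot V(v_\ep,B_\ep)$; the pinning drift $\nab\log b\cdot\nab_{B_\ep}v_\ep$ contributes a divergence that is integrated away and does \emph{not} leave a force term $-\nab\log b(a_i)$ in the limit. The pinning force is extracted instead from the divergence of the weighted stress-energy tensor $T_\ep$ of \eqref{T_def}, through the identity \eqref{t_d_01} and specifically the $-\big(\tilde g_\ep - \tfrac12\mathrm{tr}\,T_\ep\big)\nab\log b$ term, which concentrates into $-\nu_i\nab\log b(a_i)$ at the vortex. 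More fundamentally, the global energy balance cannot by itself yield a per-vortex force balance: it gives $p_i=-\nu_i\dot a_i$ via a localized moving cutoff (Proposition \ref{velocities}), but the vector relation that actually determines $\dot a_i$ is $S_i=0$, where $S_i$ is the Dirac part of $\mathrm{div}\,T$ at $a_i(t)$. You cannot ``equate two expressions for the limit of $V_\ep/\ale$'' from the energy identity alone; you need the second relation coming from $\mathrm{div}\,T_\ep$ as well.

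A second, related gap is that you skip the hard part of ``propagating well-preparedness.'' Because the fields are of size $\ale$, the energy can increase by an amount of the same order as a vortex energy, and the product estimate only bounds, it does not automatically saturate. The paper must therefore (i) perform a Lebesgue decomposition of \emph{all} the limiting measures $\nu,T,p,\zeta$ with respect to $\mu$ (Lemma \ref{lebesgue_decomp}), allowing diffuse excess energy $\nu_0$ and $\zeta_0$, (ii) show $p_i = -\nu_i\dot a_i$, $T_i=0$ by localized test-function arguments, (iii) prove the product-estimate corollary $\int\zeta_i \ge \alpha\pi^2\int b^2(a_i)|\dot a_i|^2/\nu_i$, and (iv) run a Gronwall argument combining all of these with the $S_i$-estimate of Lemma \ref{Si_T_estimate} to force $\nu_0=0$, $\zeta_0=0$, $\nu_i=\pi b(a_i)$ a posteriori. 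Only then does $S_i=0$ follow, giving the dynamical law. Asserting that well-preparedness ``propagates'' because the product estimate is ``saturated'' is precisely the conclusion of the Gronwall argument, not an input to it; stated as you have it, the reasoning is circular. On the other hand your observation that the $\beta$-terms drop out of the dissipation via $(iX,X)=0$ is correct, and the choice of gauge-covariant multipliers is a harmless variant of the paper's $\dt v_\ep$, $\dt B_\ep$.
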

\begin{remark}
$\text{}$

\begin{enumerate}
\item  The last relation shows that even though the energy of the system can increase, no significant excess energy is created.

\item According to Lemma \ref{X_f_relation} and equation \eqref{propeX}, the dynamical law \eqref{dynalaw} may be rewritten as
\begin{equation}\label{dynamics_phi}
\alpha \dot{a}_i + d_i \beta \dot{a}_i^\bot =  -2 d_i \left(\frac{ \sigma \nab^\bot \phib(a_i) + \nab \hb(a_i)  }{b(a_i)} \right)- \nab \log b(a_i). 
\end{equation}
This form highlights the fact that the first term on the right may be separated into electric and magnetic parts since $\phib$ is the electric potential (our gauge is $\Phi_\ep = \ale \phib$) and $\hb$ is related to the external magnetic field via the boundary condition in \eqref{h_def}.

\item Our analysis may be modified in a straightforward way to handle the case of pinning with some smaller parameter regimes: the case of complex multiplier of the form $(\alpha + i \beta_\ep)$ with $\beta_\ep = O(\ale)$, and the case of fields $J_{ex} = j_{ex} J$ and $H_{ex} = h_{ex} H$ with  $j_{ex} = O(\ale)$ and $h_{ex} = O(\ale)$.  When one of $\beta_\ep$, $h_{ex}$, $j_{ex}$ is smaller, i.e. of order $o(\ale)$, the resulting dynamics are the same as those derived in Theorem \ref{dynamics-intro} by setting $\beta =0$, $H =0$, $J=0$, respectively.  In particular, when $h_{ex}$ and $j_{ex}$ are both $o(\ale)$ but $\beta_\ep = \beta \ale$, we get
\begin{equation}\label{pinning_only}
\alpha \dot{a}_i + d_i \beta \dot{a}_i^\bot =   - \nab \log b(a_i).
\end{equation} 
\end{enumerate}

\end{remark}

In the dynamical law we identify $-\nab \log b(a_i)$  as the pinning force, which is identical to that conjectured  in \cite{cr,jiangsong} and essentially the same as that found with a different pinning model in \cite{lin_2}.  The other term,  $-2 d_i\left( \nab^\bot \fb(a_i) - \xb^\bot (a_i)  \right) := -2 d_i Z^\bot(a_i)$, may be identified as the current-induced Lorentz force.  Setting $b\equiv 1$, $\beta =0$, and employing the definitions \eqref{phi_def} and \eqref{h_def} in the form of the dynamical law \eqref{dynamics_phi} shows that $Z$ is the same as the current-induced force identified in \cite{tice_2}.  If in addition $H_{ex}= h_{ex}$ and $J=0$, then $Z$ is the same as in  \cite{ss_gamma}.  It is a simple matter to rewrite \eqref{dynalaw} as
\begin{equation}\label{dot_solved}
 \dot{a}_i = \frac{\alpha}{\alpha^2 + \beta^2} \left( -2d_i Z^\bot(a_i) - \nab \log b(a_i)\right) - \frac{\beta}{\alpha^2 + \beta^2} \left( 2 Z(a_i) - d_i \nab^\bot \log b(a_i)\right),
\end{equation}
which reveals an interesting feature of the dynamics: one part of the forcing depends on the degree (pushing $d = \pm 1$ vortices in opposite directions), and one part does not (pushing all vortices in the same direction).

From \eqref{pinning_only} it is clear that at least one of $j_{ex}$ and $h_{ex}$ must be of the order $O(\ale)$ in order for anything other than pinning to drive the dynamics.  In this way we identify the order of the critical current as $O(\ale)$.  A more precise identification of the critical current is somewhat more delicate since it relies on the exact form of the applied currents, $J$ and $I$.  One possible way to make this identification is to further assume that $J = \lambda J_0$ and $I = \lambda I_0$ for some fixed $I_0, J_0$ with $\lambda \in (0,\infty)$.  The dynamical law \eqref{dynalaw} is then rewritten as 
\begin{equation}\label{dynamics_lambda}
\alpha \dot{a}_i + d_i \beta \dot{a}_i^\bot =  -2 d_i \lambda \( \nab^\bot \fb(a_i) - \xb^\bot (a_i)  \) - \nab \log b(a_i),
\end{equation}
where $\fb$ and $\xb$ are determined by replacing $J,I$ with $J_0,I_0$ in the PDEs \eqref{phi_def}--\eqref{f_def}. We can then define the critical current to be $\lambda_0 \ale$, where 
\begin{equation}
 \lambda_0 := \inf \{ \lambda>0 \;\vert\;  \text{no solution to }\eqref{dynamics_lambda} \text{ remains confined near the local minima of $b$ } \}.
\end{equation}
Clearly, $\lambda_0$ depends on $J_0,I_0$, and $b$, and is difficult to compute explicitly because of \eqref{phi_def}--\eqref{f_def} unless $J_0,$ $I_0,$ $b$, and $\Omega$ have some simple forms.

In the regime we have chosen, where the applied field and currents have strength $\lep$,  all the forcing terms in the dynamical law \eqref{dynalaw} have equivalent strength, and the interaction between the vortices (the renormalized energy found in all the works above) is negligible compared to them.  Hence, in contrast to the dynamics derived for weak (i.e. $O(1)$) fields in \cite{tice_2}, the inter-vortex interaction disappears for strong fields.  Also, these forces are strong enough to make the vortices move at finite speed, without the need to accelerate time. In a way this makes the analysis simpler as we shall discuss below.

\subsection{Method of the proof and case of the Ginzburg-Landau equation without magnetic field}\label{glsimple}

As in \cite{tice_2}, the first step is to transform the equation via the  change of unknown functions \eqref{changefunctions}.
The function $\fb$ and the vector-field $\xb$ are in fact chosen in such a way that the applied field and current disappear from the boundary condition, but appear as additional terms in the right-hand of the PDEs. Note also that here the pinning has the effect of penalizing $b- \abs{u_\ep}^2$, so with this change we expect $\abs{v_\ep}$ to be close to $1$.
For the sake of simplicity, and to extract the relevant ideas, we now present a sketch of our proof in the case  $A_\ep \equiv 0, \Phi_\ep\equiv 0$, i.e. for the equation without gauge, which can also have an interest in itself.  The change of unknown function  essentially transforms the equation \eqref{tdgl}  for $u_\ep$  into an  equation for $v_\ep$  of the form \eqref{eqsimpl} (for a more accurate equation, see Lemma \ref{specific_reformulation}) with homogeneous Neumann boundary condition, for some vector field $Z (x)$, some function $h(x)$ (in fact $h= \log b$), independent of $\ep$, and some function $f_\ep(x)$ which may depend on $\ep$ but blows up much slower than, say, $1/\ep$ (this is not the optimal condition).  The question is then to understand the effect  of the forcing terms  $\nab h \cdot \nab u_\ep$  and $2i \lep Z \cdot  \nab u_\ep$ in this simple  ``forced'' Ginzburg-Landau 
equation and to show the $f_\ep u_\ep$ term has no influence.

In this simpler setting the vorticity $\mu(u)$ and velocity $V(u)$ can be recomputed from \eqref{mu_def}--\eqref{V_def} as 
\begin{equation}\label{musimple}
\mu(u)= \curl (iu, \nab u)=  2(i \partial_1  u, \partial_2 u)
\end{equation}
and 
\begin{equation}\label{Vsimple}
V(u)=\nabla (iu, \p_t u) - \p_t (iu , \nab u) = 2(\p_t u , i\nab u).
\end{equation}
For simplicity we will denote $\mu_\ep$ for $\mu(u_\ep)$ and $V_\ep$ for $V(u_\ep)$.

The classical starting point to obtain the limiting dynamical law is to  look at the evolution of the local energy density 
\begin{equation}
e_\ep(u)= \hal |\nab u|^2 + \frac{(1-\abs{u})^2}{4\ep^2},
\end{equation}
use the stress-energy tensor, and try to take the limit.  However, here it is much more convenient to use  a weighted energy density 
\begin{equation}
 \tilde{e}_\ep(u_\ep) = e^h \left(e_\ep(u) + \frac{1-\abs{u}^2}{2} f_\ep\right).
\end{equation}
Since $e^h$ appears frequently, we  set  $b =e^h$ so that $h= \log b$,  which is consistent with our notation for pinning.

An easy   computation gives that for any function $u_\ep$,
\begin{equation}\label{evole}
\dt e_\ep(u_\ep) = \diverge (\dt u_\ep, \nab u_\ep) -\( \dt u_\ep, \Delta u_\ep+ \frac{u_\ep}{\ep^2}(1-\abs{u_\ep}^2)  \).
\end{equation} 
If $u_\ep$ is a solution to \eqref{eqsimpl}, we then have 
\begin{multline}\label{evole2}
\dt  e_\ep(u_\ep) =  \diverge (\dt u_\ep, \nab u_\ep)
- \a \abs{\dt u_\ep}^2 + (  \dt u_\ep, \nab u_\ep  \cdot \nab h ) \\
+  \lep( \dt u_\ep,   2 Z  \cdot  i\nab u_\ep  )+ f_\ep(u_\ep, \dt u_\ep) .
\end{multline}
The last term in the right-hand side can be recognized as 
\begin{equation}
\frac{d}{dt} \left( \frac{\abs{u_\ep}^2 -1}{2} f_\ep \right)
\end{equation}
and the one before last as $\lep V_\ep \cdot Z$.  We then deduce that 
\begin{equation}\label{evolet}
\dt \tilde{e}_\ep(u_\ep)=  \diverge \( b  ( \dt u_\ep, \nab u_\ep) \)   - \a b   |\dt u_\ep|^2 
+ \lep b V_\ep \cdot Z.
\end{equation}

We next define the weighted stress-energy tensor associated to $u_\ep$ by 
\begin{equation}\label{Tweight}
T_\ep = b \( \nab u_\ep \otimes  \nab u_\ep - \(e_\ep(u_\ep)  + \frac{1-\abs{u_\ep}^2}{2}  f_\ep\)  I_{2\times 2} \) ,
\end{equation} and a computation yields 
\begin{equation}\label{divt}
\diverge T_\ep = b \big( \Delta u_\ep +  \frac{u_\ep}{\ep^2}(1-\abs{u_\ep}^2) + \nab h \cdot \nab u_\ep +   f_\ep u_\ep , \nab u_\ep \big)
 - \tilde{e}_\ep(u_\ep)  \nab h + b \nab f_\ep \frac{ \abs{u_\ep}^2 - 1} {2} .
\end{equation}
Note that $T_\ep$ is a symmetric $2\times 2$ tensor, so $\diverge T_\ep$ denotes the vector whose coordinates are the divergence of the rows of $T_\ep$.
If  $u_\ep$ is a solution to \eqref{eqsimpl} we deduce 
\begin{multline}\label{divvvtt}
\diverge T_\ep =  b \a (\dt u_\ep, \nab u_\ep ) + b \beta \lep   (i \dt u_\ep, \nab u_\ep  )  - \lep b( 2iZ \cdot \nab u_\ep, \nab u_\ep) 
 - \tilde{e_\ep}(u_\ep) \nab h\\
 + b \nab f_\ep \frac{ \abs{u_\ep}^2 - 1} {2} \\
  = b \a (\dt u_\ep, \nab u_\ep ) - \hal \beta \lep b V_\ep   - b Z^\perp \mu_\ep  - \tilde{e}_\ep(u_\ep) \nab h +  b \nab f_\ep \frac{ \abs{u_\ep}^2 - 1} {2} 
\end{multline} where we used \eqref{musimple}. 
The limiting law is then derived from the two relations \eqref{evolet} and \eqref{divvvtt}, which can be combined to get  
\begin{multline}\label{coupleq}
\dt \tilde{e}_\ep(u_\ep)=  \frac{1}{\a}\diverge\diverge T_\ep   + \frac{\beta}{2 \a} \lep\diverge( b V_\ep)  + \frac{\lep }{\a} \diverge (b Z^\perp \mu_\ep) + \frac{1}{\a} \diverge( \tilde{e}_\ep(u_\ep) \nab h) \\
- \diverge \left(\frac{b}{\a} \nab f_\ep \frac{\abs{u_\ep}^2 -1}{2}\right)
    - \a b   |\dt u_\ep|^2 
+ \lep b V_\ep \cdot Z.
\end{multline}

To be able to take limits in these relations, a priori bounds on all terms, in particular energy bounds are needed. In typical situations (heat or mixed flows) the total energy $\int e_\ep(u_\ep)$ or $\int \tilde{e}_\ep(u_\ep)$ decreases in time, but this is not the case here because of the presence of the forcing terms  which can  bring in energy (this is observable in \eqref{evolet}). However  the ``product estimate'' of \cite{ss_prod} provides control of $V$: it tells us that 
\begin{equation}\label{Vprodest}
\hal \abs{\int_{t_1}^{t_2} \int_\Omega V \cdot X} 
\le \liminf_{\ep \rightarrow 0} \frac{1}{\ale}\left( \int_{t_1}^{t_2}  \int_\Omega \abs{\nab u_\ep \cdot X}^2  \int_{t_1}^{t_2}  \int_\Omega \abs{\dt u_\ep}^2  \right)^{1/2}
\end{equation}
for any smooth vector field $X$, where $V$ is the limit of $V_\ep$ as $\ep \to 0$.
This estimate, inserted in \eqref{evolet}, gives control of the growth of the energy  as in \cite{tice_2}. While in the timescale of \cite{tice_2} the estimate shows the energy increases  by at most a constant in (small) finite time, here we only get that it increases by at most $\eta\lep $ in the (small)  time interval $[0,T_*]$.  This is due to the strength of the forcing terms (now in $\lep$ instead of constant) and pinning terms, and it  is a significant difference: while the $\eta$ can be taken small enough ($<\pi \inf b$) so that  no additional vortex can appear, there can still be an energy of the same order as  the vortex energy floating around. The usual proofs of dynamics mentioned above \cite{lin_1,lin_2,js,spirn,tice_2,miot,kurzkespirn} use the fact that from a priori bounds,   the energy density  $\frac{1}{\lep} e_\ep(u_\ep) $ can only concentrate at the vortex locations and  converges in the sense of measures to $\pi \sum \delta_{a_i(t)}$ where the $a_i(t)$ are the vortex centers.  Here we cannot use this and have to accept the possibility of another term in the limiting energy measure, which is not necessarily concentrated at points,  and with which we work until we eventually can prove it is zero by a Gronwall argument. 
 So we denote $\nu(t)$ the limit in the sense of measures of the energy density 
 $\frac{1}{\lep}\tilde{e}_\ep(u_\ep(t))$; all we know (from the $\Gamma$ convergence  result \eqref{gcv})  is that if there are $n$ limiting  vortices located at $a_i(t)$ then
 \begin{equation}\label{minonu}
 \nu(t)\ge \pi \sum_{i=1}^n  b (a_i(t)).\end{equation}
 
Returning now to the discussion of  taking limits in \eqref{evolet} and \eqref{divvvtt}, we note that in situations where there is bounded order forcing or  no forcing, time must  be accelerated for vortex motion, and one takes the limit of 
$\dt \frac{\tilde{e}_\ep (u_\ep)}{\lep}$ on the one hand, and equates it with that of $\diverge T_\ep$ on the other hand.  The stress-energy tensor $T_\ep$ itself is not bounded as $\ep \to 0$, so it does not have a true limit. However,  $\diverge T_\ep$ has a limit in ``finite parts'' (see \cite{ss_book} Chap. 13 for this viewpoint) and one can show it is equal to the gradient of the ``renormalized energy'' of \cite{bbh}. This is the main force driving the dynamics (possibly supplemented with the forcing of $Z$)  in the dynamics derived in  \cite{tice_2}. It is in particular  this computation in finite parts that makes the proofs \cite{lin_1,lin_2,js,spirn,tice_2,miot,kurzkespirn}   delicate. 

Our present situation is somewhat easier in that sense. Because of the strong forcing, no rescaling in time is necessary, and in \eqref{evolet} and \eqref{divvvtt} it suffices to pass to the limit in $\frac{1}{\lep} \tilde{e}_\ep(u_\ep)$ and $\frac{1}{\lep} \diverge T_\ep$ simultaneously.   Now $\frac{T_\ep}{\lep}$ is easily seen to be bounded since $T_\ep$ is bounded by the energy density, so it immediately  has a weak limit, $T$. The terms in $\abs{u_\ep}^2-1$ go to zero in the limit by the energy control and the assumption $\|f_\ep\|_{C^1}\le \frac{1}{\ep}$.
The other terms in \eqref{evolet} and \eqref{divvvtt} all have limits when normalized by $\lep$, thanks to the energy bound and the compactness results on $\mu_\ep$ and $V_\ep$ provided, for example, by \cite{ss_prod}. In particular $\mu_\ep \to \mu= 2\pi \sum_{i=1}^n d_i \delta_{a_i(t)}$ while $V_\ep \to V= 2\pi \sum_{i=1}^n d_i \dot{a}_i ^\bot\delta_{a_i(t)}$ where the $a_i(t)$ are the (continuous) vortex trajectories, and where the number of vortices and their degrees remain constant on $[0,T_*]$.  We may then assume, up to extraction, that 
\begin{equation} 
\frac{T_\ep}{\lep} \to T, \, \frac{\diverge{T_\ep}}{\ale} \to \diverge{T},  \frac{b(\dt u_\ep, \nab u_\ep)}{\ale} \to p, \text{ and } \frac{\a b |\dt u_\ep|^2 }{\lep }\to \zeta 
 \end{equation}
 in the weak sense of measures in $\Omega \times [0,T_*]$.
Dividing by $\lep$ in \eqref{evolet} and \eqref{divvvtt} and taking the limit as $\ep \to 0$, we thus find
\begin{equation}\label{eqlimite1}
 \dt \nu  = \diverge{p} -  \zeta + b V \cdot Z
\end{equation}
and
\begin{equation}\label{eqlimite2}
 \diverge{T} = \alpha p - \hal \beta b V - b \mu Z^\bot - \nu \nab h.
\end{equation}

The next ingredient is to perform a Lebesgue decomposition of the various measures with respect to the vorticity measure $\mu  = 2\pi \sum_{i=1}^n d_i \delta_{a_i(t)} dt$.  This yields 
\begin{equation}\label{eqlimite3}
\begin{split}
\nu & = \nu_0 (t) dt + \sum_{i=1}^n \nu_i(t) \delta_{a_i(t)} dt, \; T = T_0 + \sum_{i=1}^n  T_i(t) \delta_{a_i(t)} dt, \\
 \diverge{T} &= S_0 + \sum_{i=1}^n S_i(t) \delta_{a_i(t)}dt ,\; p = p_0 + \sum_{i=1}^n p_i(t) \delta_{a_i(t)} dt, \text{ and } \\
\zeta  &= \zeta_0  + \sum_{i=1}^n \zeta_i(t) \delta_{a_i(t)} dt, 
\end{split}
\end{equation}
where $\nu_i(t), T_i(t), S_i(t), p_i(t), \zeta_i(t)$ are functions of time and $\nu_0(t)dt, T_0, S_0, p_0, \zeta_0$ are singular with respect to $\mu$.  These quantities are interrelated by \eqref{eqlimite1} and \eqref{eqlimite2}, and indeed from the decomposition \eqref{eqlimite2} we have that
\begin{equation}\label{eqlimite4}
 S_0 = \alpha p_0 - \nu_0 \nab h \text{ and } S_i = \alpha p_i  - \beta \pi b(a_i)  d_i \dot{a}_i ^\bot  - 2\pi b(a_i) d_i Z^\bot(a_i)  - \nu_i \nab h(a_i).
\end{equation}
The latter equation in principle gives the velocity of the vortices, hence the dynamical law; however, in our situation we only know from \eqref{minonu}  that $\nu_i \ge \pi b(a_i)$, but we do not know the precise values of $\nu_i$,  nor do we know $p_i$ or $S_i$.

To determine $p_i$ we appeal to the equation \eqref{eqlimite1}.  If we formally plug the decompositions \eqref{eqlimite3} into \eqref{eqlimite1} and throw away all but the concentrated parts, we find that
\begin{equation}\label{eqlimite5}
\dt\(\sum_{i=1}^n \nu_i \delta_{a_i}\) 
 =  \diverge\left( \sum_{i=1}^n p_i \delta_{a_i} \right)- \sum_{i=1}^n \zeta_i \delta_{a_i} + 2\pi  \sum_{i=1}^n b(a_i) d_i \dot{a}_i^\bot \cdot Z(a_i) \delta_{a_i}.
\end{equation}
We then observe that 
\begin{equation}
\dt \left(\sum_{i=1}^n \nu_i  \delta_{a_i } \right)= -\diverge\left(\sum_{i=1}^n \nu_i  \dot{a}_i \delta_{a_i }\right)    + \sum_{i=1}^n \dt \nu_i  \delta_{a_i } 
\end{equation}
in the sense of distributions.  Using this in \eqref{eqlimite5} and equating  the terms that  are divergences of Dirac deltas, we find  that
\begin{equation}\label{eqlimite6}
 p_i = -\nu_i \dot{a}_i \text{ for }i=1,\dotsc,n.
\end{equation}
This formal calculation can be made rigorous by integrating \eqref{eqlimite1} against an appropriately chosen test function with support  that moves with the vortices, and indeed we can prove that \eqref{eqlimite6} actually holds.  

We can also determine that $T_i=0$ for each $i=1,\dotsc,n$.  If we  use the decompositions \eqref{eqlimite3} in \eqref{eqlimite2}, then  we see that
\begin{equation}
\diverge{T_0} + \sum_{i=1}^n \diverge\left(   T_i \delta_{a_i} \right)  = S_0 + \sum_{i=1}^n S_i \delta_{a_i}.
\end{equation}
Testing this against appropriately chosen vector fields then allows us to deduce that $T_i=0$ for $i=1,\dotsc,n$.

The next step is to prove that $\nu_i = \pi b(a_i)$, $\nu_0=0$, and $\zeta_0=0$ by a Gronwall argument, the key to which is a pair of estimates for $\zeta_i$ and $S_i$.  The first estimate, 
\begin{equation}\label{corprodes}
\int_{0}^t \sum_{i=1}^n \zeta_i \ge \int_0^t \a \pi^2 \sum_{i=1}^n b^2(a_i) \frac{|\dot{a}_i|^2}{\nu_i},
\end{equation}
 is a simple corollary of the product estimate \eqref{Vprodest}.   The second estimate, 
\begin{equation}\label{eqlimite7}
 \int_0^t \sum_{i=1}^n S_i \cdot \dot{a}_i \le \hal \int_0^t \int_\Omega \zeta_0 + \int_0^t C\left(1+\sum_{i=1}^n \abs{\dot{a}_i}^2 \right) \int_\Omega \nu_0,
\end{equation}
may be derived by relating the pairings $\diverge{T}\cdot \Xi$ and $T:D\Xi$ (here $A:B = \sum_{lk} A_{lk}B_{lk}$) through  the divergence theorem and using an appropriate test vector field $\Xi$.  We integrate the relation \eqref{eqlimite1}  over $\Omega$ and between time $0$ and $t$ to find 
\begin{equation}
\io \nu(t)- \io \nu(0) + \int_0^t \io \zeta =  \int_0^t\sum_{i=1}^n 2 \pi d_i b(a_i) Z(a_i) \cdot \dot{a}_i^\bot.
\end{equation}
Into this equation we insert \eqref{eqlimite4}, \eqref{eqlimite6}, \eqref{corprodes},  the well-preparedness assumption (implying $\nu(0)= \pi \sum_{i=1}^n b(a_i(0))$),  and the relation $\pi \sum_{i=1}^n b(a_i(t))= \pi \sum_{i=1}^n b(a_i(0)) + \int_0^t \pi \sum_{i=1}^n \nabla b \cdot \dot{a}_i$; 
after rearranging a little  we are led to
\begin{multline}
\io \nu_0(t)  +   \sum_{i=1}^n (\nu_i(t) - \pi b(a_i(t)))  + \int_0^t \int_\Omega \zeta_0 \le \alpha \int_0^t \sum_{i=1}^n \abs{\dot{a}_i}^2\left( \nu_i  - \frac{\pi^2 b^2(a_i )}{\nu_i}\right) \\
 +\int_0^t \sum_{i=1}^n  (\dot{a}_i \cdot  \nab h(a_i))(\nu_i  - \pi b(a_i)) + \int_0^t \sum_{i=1}^n S_i \cdot \dot{a}_i.
\end{multline}
Using the fact that $\nu_i \ge \pi b(a_i)$ in conjunction with  \eqref{eqlimite7}, we find that
\begin{multline}
\io \nu_0(t)  +   \sum_{i=1}^n (\nu_i(t) - \pi b(a_i(t)))  + \hal \int_0^t \int_\Omega \zeta_0 \\ \le  \int_0^t   C\left(1+\sum_{i=1}^n \abs{\dot{a}_i}^2 \right)  \left[\sum_{i=1}^n(\nu_i  - \pi b(a_i))  + \int_\Omega \nu_0 \right].
\end{multline}
A simple application of Gronwall's lemma, using the fact that $a_i \in H^1$, then allows us to conclude that $\zeta_0=0$ and $\nu_i(t)=\pi b(a_i(t))$, $\nu_0(t)=0$  for all time, i.e. we find  a posteriori that no significant excess energy develops and that $\zeta$ only concentrates.

In the final step, we use the fact that $\nu_0 =0$ and $\zeta_0=0$ to show that $T_0 =0$ and $S_0=0$.  We can then return to \eqref{eqlimite3} to compare
\begin{equation}
0=  \diverge\left(T_0 + \sum_{i=1}^n T_i(t) \delta_{a_i(t)}dt \right) = \diverge{T} = S_0 + \sum_{i=1}^n S_i(t) \delta_{a_i(t)}dt = \sum_{i=1}^n S_i(t) \delta_{a_i(t)}dt.
\end{equation}
Hence $S_i =0$ a.e. for all $i=1,\dotsc,n$.  Since we now know $\nu_i$, $p_i$, and $S_i$, we may return to \eqref{eqlimite4} to deduce the dynamical law
\begin{equation}
\label{dynlawint}
\a \dot{a}_i + d_i \beta \dot{a}_i^\bot = -  2 d_i Z^\perp(a_i)   - \nab h(a_i).
\end{equation}
We find that in the original equation \eqref{eqsimpl} the forcing term 
$\nab h \cdot \nab u_\ep$ translates into a force $-\nab h(a_i)=-  \nab \log b(a_i)  $ acting on each vortex and pushing it toward the local minima of $b$, while the forcing term $2i \lep Z\cdot \nab u_\ep$ translates into a Lorentz-type force 
$- 2d_i Z^\perp(a_i)$. 
We can  write this result as 
\begin{thm}\label{th2}
Let $u_\ep$ be a solution to \eqref{eqsimpl} where $Z(x)$ is a smooth vector field, $h(x)$ a smooth function and $f_\ep(x)$ a function satisfying $\|f_\ep\|_{C^1}\le \frac{1}{\ep}$ and assume the well-preparedness condition 
\begin{equation} 
\left\{\begin{array}{rl}
 \mu(u_\ep)(0) & \rightarrow 2\pi \sum_{i=1}^n d_i \delta_{a_i(0)} \text{ for }d_i = \pm 1,  \\ [2mm]
\io \tilde{e}_\ep(u_\ep)(0)&= \pi  \sum_{i=1}^n e^h(a_i) \lep +o(\lep),
\end{array}\right.
\end{equation} 
where the $a_i(0)\in \Omega$ are distinct points.
Then  there exist $n$ continuously differentiable functions $a_i:[0,T_*)\rightarrow \Omega$  such that   the vortices move along the trajectories $a_i$ (i.e. $\mu(t)= 2\pi \sum_i d_i \delta_{a_i(t)}$) solving 
\begin{equation}
\a \dot{a}_i + d_i \beta \dot{a}_i^\bot = -  2 d_i Z^\perp(a_i)   - \nab h(a_i). 
\end{equation}
Moreover, $T_*$ is the smallest of  the first collision time and the first exit time of vortices under this law.\end{thm}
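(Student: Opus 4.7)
My plan follows the program laid out in Section \ref{glsimple}, built on three ingredients: the energy/stress-energy pair of identities \eqref{evolet}-\eqref{divvvtt}, the product estimate \eqref{Vprodest} of \cite{ss_prod}, and a Gronwall argument on the defect masses after Lebesgue decomposition.

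The first step is to compute $\dt \tilde{e}_\ep(u_\ep)$ and $\diverge T_\ep$ using \eqref{eqsimpl}, yielding \eqref{evolet} and \eqref{divvvtt}. Inserting \eqref{Vprodest} into \eqref{evolet} controls the growth of the total weighted energy on a short interval $[0,T_*]$ by at most $\eta\ale$, with $\eta$ chosen less than $\pi\inf b$ so that the $\Gamma$-convergence lower bound \eqref{gcv} forbids nucleation of any additional vortex; the number and degrees of the original vortices are therefore preserved on $[0,T_*]$. This yields uniform bounds on $\tilde{e}_\ep(u_\ep)/\ale$, $T_\ep/\ale$, $b(\dt u_\ep,\nab u_\ep)/\ale$, and $\a b|\dt u_\ep|^2/\ale$, so up to a subsequence these converge weakly-$*$ as measures to limits $\nu, T, p, \zeta$. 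Combined with Jacobian compactness (giving $\mu_\ep\to 2\pi\sum d_i\delta_{a_i(t)}dt$ and $V_\ep\to 2\pi\sum d_i\dot a_i^\bot\delta_{a_i(t)}dt$ along continuous trajectories), and with the observation that the terms involving $|u_\ep|^2-1$ and $f_\ep$ vanish in the limit thanks to the energy control and the hypothesis $\|f_\ep\|_{C^1}\le 1/\ep$, passing to the limit in \eqref{coupleq} and \eqref{divvvtt} divided by $\ale$ yields the limiting identities \eqref{eqlimite1} and \eqref{eqlimite2}.

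Next I would perform the Lebesgue decomposition \eqref{eqlimite3} with respect to $\mu$ and identify the concentrated coefficients. Testing \eqref{eqlimite1} against a cutoff identically equal to $1$ on a small moving ball around $a_i(t)$ and matching the divergence-of-Dirac terms forces $p_i = -\nu_i\dot a_i$, as in \eqref{eqlimite6}; testing \eqref{eqlimite2} against vector fields constant near each $a_i(t)$ forces $T_i=0$. This already gives the algebraic relation \eqref{eqlimite4} connecting $S_i, p_i, \nu_i$ and $\dot a_i$, but the values of $\nu_i$ and the diffuse parts $\nu_0,\zeta_0,T_0,S_0$ are still unknown.

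The main obstacle is the Gronwall step showing $\nu_0\equiv 0$, $\zeta_0\equiv 0$, and $\nu_i\equiv \pi b(a_i)$. The two crucial inputs are \eqref{corprodes} (a corollary of \eqref{Vprodest} via Cauchy-Schwarz after concentrating $V$ at the $a_i$) and \eqref{eqlimite7}, which I would derive by pairing $\diverge T$ against a vector field $\Xi$ equal to $\dot a_i$ near each $a_i(t)$, moving derivatives onto $\Xi$ via the divergence theorem, and bounding the $T_0:D\Xi$ contribution by the mass of $\nu_0$. Integrating \eqref{eqlimite1} over $\Omega\times[0,t]$ and substituting \eqref{eqlimite4}, \eqref{eqlimite6}, \eqref{corprodes}, \eqref{eqlimite7}, the well-preparedness identity $\nu(0)=\pi\sum b(a_i^0)$, and the chain-rule relation for $\pi\sum b(a_i(t))$, while using $\nu_i\ge\pi b(a_i)$ to absorb the $\a\dot a_i^2(\nu_i-\pi^2b^2/\nu_i)$ term, produces a closed inequality of the form $\sum_i(\nu_i-\pi b(a_i)) + \io\nu_0 + \hal\int_0^t\io\zeta_0 \le \int_0^t C(1+\sum_i|\dot a_i|^2)\,[\sum_j(\nu_j-\pi b(a_j))+\io\nu_0]$, whose coefficient is integrable because $a_i\in H^1$. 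Gronwall then kills all three defects. Finally $\nu_0=\zeta_0=0$ implies $T_0=0$, hence $S_0=0$, so \eqref{eqlimite4} reduces to $\a p_i - \beta\pi b(a_i)d_i\dot a_i^\bot - 2\pi b(a_i)d_iZ^\bot(a_i) - \pi b(a_i)\nab h(a_i) = 0$, which after inserting $p_i=-\pi b(a_i)\dot a_i$ is precisely the claimed ODE; local $C^1$ solvability up to $T_*$ follows from smoothness of $Z,h,b$.
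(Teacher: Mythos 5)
Your overall strategy faithfully reproduces the sketch in Section \ref{glsimple}, which is exactly what the paper itself offers as the proof of Theorem \ref{th2} (the paper only states that the gauged argument of Sections 4--6 simplifies in this case and leaves details to the reader). The energy-growth control via the product estimate, the weak limits of $T_\ep/\ale$, $b(\dt u_\ep,\nab u_\ep)/\ale$, $\alpha b|\dt u_\ep|^2/\ale$, the Lebesgue decomposition, the bound $\nu_i\ge\pi b(a_i)$, the estimate \eqref{corprodes}, the $S_i$-estimate \eqref{eqlimite7} via vector fields equal to $\dot a_i$ near each vortex, and the Gronwall closure are all correctly described and in the right order.

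However, the two test-function constructions you invoke for identifying $p_i$ and $T_i$ are wrong as stated and would not deliver the claimed identities. For $p_i=-\nu_i\dot a_i$ you propose ``a cutoff identically equal to $1$ on a small moving ball around $a_i(t)$''; such a $\psi$ has $\nab\psi(a_i)=0$ and $\dt\psi(a_i)=0$, so after pairing \eqref{eqlimite1} with it the atoms $p_i\delta_{a_i}$ and the $\nu_i\dot a_i\delta_{a_i}$ contribution both drop out and you learn nothing about $p_i$. What is needed (Proposition \ref{velocities}) is the opposite: $\psi(x,t)=\eta\,\phi((x-a_i(t))/\eta,t)$ with $\phi(0,\cdot)$ bounded and $\nab\phi(0,t)=Y(t)$, so that $\psi(a_i)\to 0$ while $\nab\psi(a_i)=Y(t)$ and $\dt\psi(a_i)\to-\dot a_i\cdot Y(t)$; only then do the $\nab\psi\cdot p$ and $\dt\psi\,\nu$ terms isolate $p_i$ and $\nu_i\dot a_i$. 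Similarly, for $T_i=0$ you propose ``vector fields constant near each $a_i(t)$''; a constant $\Xi$ has $D\Xi(a_i)=0$, so the pairing $\int T:D\Xi=-\int\diverge T\cdot\Xi$ loses the $T_i$ atom and instead tests $S_i$ (that constant-type field is precisely the one used later for the $S_i$ bound, as you do correctly describe). The right field, as in Proposition \ref{tensor_structure}, is $\Xi(x,t)=\sum_i K_i(t)\cdot(x-a_i(t))\,\psi((x-a_i(t))/\eta)$, which vanishes at $a_i$, has $D\Xi(a_i)=K_i(t)$, and satisfies $\|\Xi\|_\infty\le C\eta$, $\|D\Xi\|_\infty\le C$, so that sending $\eta\to0$ kills the $S_0\cdot\Xi$ and $T_0:D\Xi$ contributions and leaves $\int\sum_i T_i:K_i=0$. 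In short: the identification steps require test objects that \emph{vanish} at the vortices with \emph{prescribed first derivative}, not ones that are constant there -- you have the roles reversed.
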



Note that a simple adaptation  of this allows for the treatment of the case of the Ginzburg-Landau equation with pinning 
\begin{equation}
(\a + i \beta \lep) \dt u_\ep= \Delta u_\ep + \frac{u_\ep}{\ep^2} (b-\abs{u_\ep}^2), 
\end{equation}
where $\min b>0$,
since the change of unknown function  $v_\ep= \frac{u_\ep}{\sqrt{b}}$ transforms it into \begin{equation}
(\a+ i \beta \lep) \dt v_\ep= \Delta v_\ep+ \frac{bv_\ep}{\ep^2} (1- \abs{v_\ep}^2) + \nab \log b \cdot \nab v_\ep+ 
v_\ep\frac{ \Delta \sqrt{b} }{\sqrt{b}}
\end{equation}
and yields the dynamical law 
\begin{equation}
\a \dot{a}_i + d_i \beta \dot{a}_i^\bot =   - \nab \log b(a_i),
\end{equation}
which is identical to \eqref{pinning_only}.

In more usual settings of order 1 forcing terms and logarithmic time rescaling, the method of proof above can also be applied, modulo the computation of  the limit of $\diverge T_\ep$ in finite parts, and it thus gives a unified approach to derive the dynamical law for the heat flow, mixed heat plus Schr\"odinger flow, with or without pinning, with or without an ``applied current" type forcing term.  It avoids having to choose ``clever" test-functions, as done in particular for mixed flows, and allows for the possibility  of excess-energy developping in time.

We note that in our analysis it is crucial that  the  currents or forcing terms are of strength $\ale$, but again this scaling is particularly relevant since it is precisely that for which the pinning force and the electromagnetic forces are of the same order. The case of stronger currents raises new difficulties and is still an open question.

The paper is organized as follows.  In Section \ref{sec2} we present the change of functions and choice of auxiliary functions which allow to transform the equation \eqref{tdgl} into one resembling \eqref{eqsimpl}.
Section \ref{sec3} contains various estimates controlling $1-\abs{v_\ep}$ and $B_\ep$ that ensure their compactness; it can be skipped in a first reading.  In Section \ref{sec4} we employ the method of \cite{tice_2} to show that, while the energy does not decrease, it cannot increase too quickly.  Sections \ref{sec5} and \ref{sec6} contain the core of the proof of Theorem \ref{dynamics-intro}; it is there that we derive the convergence results and dynamical law, roughly following the method outlined in Section \ref{glsimple}.

\section{Reformulating the equations}\label{sec2}
In this section we give details on the change of functions that serve to transform the equations. The idea follows \cite{tice_2}, but the pinning term complicates the choice of functions.

\subsection{Equations}

Studying the triple $(u_\ep,A_\ep,\Phi_\ep)$ is not convenient because of the pinning and boundary terms.  We reformulate the equations to remove the appearance of the applied fields from the boundary conditions.

\begin{lem}\label{general_reformulation}
Suppose  $\psi:\Omega \rightarrow \Rn{}$ and $X:\Omega \rightarrow \Rn{2}$ are both smooth and satisfy the boundary conditions
\begin{equation}\label{g_r_01}
 \begin{cases}
  \curl{X} = H & \text{on }\partial \Omega \\
  X \cdot \nu = 0 & \text{on }\partial \Omega \\
  \nab \psi \cdot \nu = J \cdot \nu & \text{on }\partial \Omega.
 \end{cases}
\end{equation} 
Let $v_\ep = u_\ep e^{-i \ale \psi}/\sqrt{b}$ and $B_\ep = A_\ep - \ale X$.  Write $Z  =\nab \psi - X$ and $Z_\ep= \ale(\nab \psi- X)$. Then $(v_\ep,B_\ep,\Phi_\ep)$ solve 
\begin{multline}\label{g_r_02}
 (\alpha + i \ale \beta )(\dt v_\ep + i\Phi_\ep v_\ep) = \Delta_{B_\ep} v_\ep + \frac{b v_\ep}{\ep^2} (1-\abs{v_\ep}^2) + \nab \log{b} \cdot \nab_{B_\ep} v_\ep + 2 i Z_\ep \cdot  \nab_{B_\ep} v_\ep \\
+ iv_\ep  \frac{\diverge(b Z_\ep )}{b} + v_\ep\left(\frac{\Delta \sqrt{b}}{\sqrt{b}} -  \abs{Z_\ep}^2   \right),
\end{multline}
\begin{equation}\label{g_r_03}
 \sigma(\dt B_\ep + \nab \Phi_\ep) = \nab^\bot \curl{B_\ep} + b (iv_\ep,\nab_{B_\ep} v_\ep) +   \ale \nab^\bot \curl{X} +\abs{v_\ep}^2  b Z_\ep 
\end{equation}
in $\Omega$, along with the boundary conditions
\begin{equation}\label{g_r_04}
 \begin{cases}
  \curl{B_\ep} = 0  & \text{on }\partial \Omega \\
  \nab_{B_\ep} v_\ep \cdot \nu = -\hal v \nab\log{b} \cdot \nu & \text{on }\partial \Omega.
 \end{cases}
\end{equation}
\end{lem}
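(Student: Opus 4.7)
The proof is a direct substitution exercise—no estimates, just algebraic bookkeeping. The plan is to insert $u_\ep = \sqrt{b}\,e^{i\ale\psi}v_\ep$ and $A_\ep = B_\ep + \ale X$ into \eqref{tdgl}, simplify using gauge covariance, and strip the common phase $e^{i\ale\psi}$. The conceptual trick is to fold the gauge shift by $\ale\nab\psi$ into the potential: writing $A_\ep - \ale\nab\psi = B_\ep - Z_\ep$, gauge covariance of $\nab_A$ and $\Delta_A$ gives
\begin{equation*}
\nab_{A_\ep}u_\ep = e^{i\ale\psi}\nab_{B_\ep - Z_\ep}(\sqrt{b}\,v_\ep),\qquad \Delta_{A_\ep}u_\ep = e^{i\ale\psi}\Delta_{B_\ep - Z_\ep}(\sqrt{b}\,v_\ep).
\end{equation*}
This reduces the computation to two elementary tools: the real-rescaling product rules $\nab_C(fw) = f\nab_C w + w\nab f$ and $\Delta_C(fw) = f\Delta_C w + 2\nab f\cdot\nab_C w + w\Delta f$ valid for real $f$, and the covariant shift formula $\Delta_{B_\ep - Z_\ep}v_\ep = \Delta_{B_\ep}v_\ep + 2iZ_\ep\cdot\nab_{B_\ep}v_\ep + iv_\ep\diverge Z_\ep - |Z_\ep|^2 v_\ep$.

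Step by step: (i) expand the gradient to find $\nab_{A_\ep}u_\ep = \sqrt{b}\,e^{i\ale\psi}\bigl(\nab_{B_\ep}v_\ep + \hal v_\ep\nab\log b + iZ_\ep v_\ep\bigr)$, from which the supercurrent identity $(iu_\ep,\nab_{A_\ep}u_\ep) = b(iv_\ep,\nab_{B_\ep}v_\ep) + b|v_\ep|^2 Z_\ep$ follows by phase invariance of the pairing and $(iv_\ep,v_\ep)=0$; (ii) expand $\Delta_{A_\ep}u_\ep$ via the two identities above and collect, recognizing $\nab\log b = 2\nab\sqrt b/\sqrt b$ and $\diverge(bZ_\ep)/b = \diverge Z_\ep + \nab\log b\cdot Z_\ep$—this step alone produces every term on the right-hand side of \eqref{g_r_02}, namely $\Delta_{B_\ep}v_\ep$, the two gradient-forcing terms $\nab\log b\cdot\nab_{B_\ep}v_\ep$ and $2iZ_\ep\cdot\nab_{B_\ep}v_\ep$, the real potential piece $v_\ep(\Delta\sqrt b/\sqrt b - |Z_\ep|^2)$, and the imaginary divergence term $iv_\ep\diverge(bZ_\ep)/b$; (iii) handle the time derivative trivially, since $b$, $\psi$, $X$ are time-independent, giving $\dt u_\ep + i\Phi_\ep u_\ep = \sqrt{b}\,e^{i\ale\psi}(\dt v_\ep + i\Phi_\ep v_\ep)$, and rewrite the nonlinearity via $|u_\ep|^2 = b|v_\ep|^2$ as $\frac{u_\ep}{\ep^2}(b-|u_\ep|^2) = \sqrt{b}\,e^{i\ale\psi}\frac{bv_\ep}{\ep^2}(1-|v_\ep|^2)$. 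Dividing the first equation of \eqref{tdgl} through by $\sqrt{b}\,e^{i\ale\psi}$ delivers \eqref{g_r_02}.

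For \eqref{g_r_03}, substitute $\dt A_\ep = \dt B_\ep$, $\curl A_\ep = \curl B_\ep + \ale\curl X$, and the supercurrent identity from step (i) into the second equation of \eqref{tdgl}. For the boundary conditions: on $\bo$, $h = H_{ex} = \ale H = \ale\curl X$ forces $\curl B_\ep = 0$; and dotting the gradient identity of step (i) with $\nu$, using $X\cdot\nu = 0$ (so $Z_\ep\cdot\nu = \ale J\cdot\nu = J_{ex}\cdot\nu$), the imaginary parts of the original condition $\nab_{A_\ep}u_\ep\cdot\nu = iu_\ep J_{ex}\cdot\nu$ cancel exactly, leaving $\nab_{B_\ep}v_\ep\cdot\nu = -\hal v_\ep\nab\log b\cdot\nu$, which is \eqref{g_r_04}. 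The main obstacle is not conceptual but purely bookkeeping in step (ii); the gauge-covariance shift $A_\ep \to B_\ep - Z_\ep$ is precisely what keeps the proliferating $Z_\ep$ and $\nab\log b$ contributions organized into the compact forms that appear in the statement.
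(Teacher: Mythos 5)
Your proof is correct, and it is exactly the ``direct calculation from the definitions'' that the paper asserts without writing out: substituting $u_\ep = \sqrt{b}\,e^{i\ale\psi}v_\ep$, $A_\ep = B_\ep + \ale X$ into \eqref{tdgl}, using gauge covariance to absorb the phase, and dotting the gradient identity with $\nu$ for the boundary conditions. The organization via the covariant shift $A_\ep \to B_\ep - Z_\ep$ is a clean way to keep the bookkeeping under control, but it is the same computation the authors had in mind.
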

\begin{proof}
The PDEs \eqref{g_r_02} and \eqref{g_r_03} follow directly from the equations \eqref{tdgl} and the definitions of $v_\ep, B_\ep$.  The boundary conditions \eqref{g_r_04} follow from \eqref{g_r_01} and the boundary conditions in \eqref{tdgl}.
\end{proof}

\subsection{Choice of subtracted fields}

We now seek to find a choice for $\psi$ and $X$ to use in Lemma \ref{general_reformulation} that leads to some optimal cancellation in the PDEs \eqref{g_r_02}--\eqref{g_r_03}.   To this end, we first  fix a gauge.  Define $\phib:\Omega \rightarrow \Rn{}$ to be the solution to \eqref{phi_def}.
Note that $\phib$ exists, is unique, and is smooth.    We now fix a gauge in which $\Phi_\ep = \ale \phib$.    

\begin{lem}\label{gauge_fix}
 It is possible to change gauges so that $\Phi_\ep = \ale \phib$ and so that $B_\ep(0)$ satisfies the Coulomb gauge, i.e. $\diverge{B_\ep(0)} =0$ and $B_\ep(0) \cdot \nu =0$ on $\partial \Omega$.
\end{lem}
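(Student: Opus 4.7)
The strategy is a two-step gauge transformation, exploiting the fact that the gauge freedom $(u,A,\Phi)\mapsto (ue^{i\xi},A+\nabla\xi,\Phi-\dt\xi)$ leaves us with one scalar function $\xi(t,x)$ to prescribe, and that $\xi(0,\cdot)$ and $\dt\xi$ can be chosen essentially independently (the first being an initial condition, the second determining the subsequent time evolution). Since under this transformation $B_\ep = A_\ep - \ale \xb$ transforms as $B_\ep \mapsto B_\ep + \nabla \xi$, the two requested conditions decouple: the condition $\Phi_\ep = \ale \phib$ constrains $\dt \xi$, while the Coulomb condition on $B_\ep(0)$ constrains only the spatial function $\xi(0,\cdot)$.

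First I would pick the initial value $\xi_0(x) := \xi(0,x)$ by solving the Neumann problem
\begin{equation*}
\begin{cases}
\Delta \xi_0 = -\diverge B_\ep(0) & \text{in }\Omega \\
\nabla \xi_0 \cdot \nu = -B_\ep(0)\cdot \nu & \text{on }\bo,
\end{cases}
\end{equation*}
whose solvability follows from the divergence theorem: $\int_\Omega \Delta \xi_0 = -\int_\Omega \diverge B_\ep(0) = -\int_{\bo} B_\ep(0)\cdot \nu$, which matches the boundary data. The solution $\xi_0$ exists (unique up to an additive constant) and is as smooth as $B_\ep(0)$ permits. By construction $B_\ep(0)+\nabla \xi_0$ is divergence-free with vanishing normal component on $\bo$, which is exactly the Coulomb gauge condition at $t=0$.

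Second, I would extend to all times by defining
\begin{equation*}
\xi(t,x) = \xi_0(x) + \int_0^t \bigl( \Phi_\ep(s,x) - \ale \phib(x) \bigr) \, ds,
\end{equation*}
so that $\dt \xi = \Phi_\ep - \ale \phib$, and apply the associated gauge transformation. The new scalar potential becomes $\Phi_\ep - \dt \xi = \ale \phib$, as required. At $t=0$ the transformation only modifies $A_\ep$ (hence $B_\ep$) by $\nabla \xi_0$, so the Coulomb gauge just arranged for $B_\ep(0)$ is preserved. This simultaneously achieves both conditions.

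There is no substantial obstacle here; the only subtle points are (i) verifying the compatibility condition for the Neumann problem, which is immediate from the divergence theorem, and (ii) observing that the two prescriptions do not conflict because one fixes $\xi$ pointwise in space at $t=0$ while the other fixes its time derivative. Regularity of $\xi$ poses no issue as long as $\Phi_\ep$ and $B_\ep(0)$ are sufficiently smooth, which they are by hypothesis on the solutions and on the auxiliary data defining $\phib$ and $\xb$.
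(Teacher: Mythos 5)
Your proof is correct. The paper itself does not supply a proof of this lemma---it simply cites Lemma 2.4 of \cite{tice_2}---and your two-step construction (fix $\xi(0,\cdot)$ by a compatible Neumann problem to impose the Coulomb condition on $B_\ep(0)$, then prescribe $\dt\xi = \Phi_\ep - \ale\phib$ to lock in the temporal gauge, noting the two prescriptions are independent) is the standard argument and is precisely the one used in that reference.
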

\begin{proof}
 The result is identical to that of Lemma 2.4 in \cite{tice_2}.
\end{proof}

With this choice of gauge, we now choose the $\psi$ and $X$ to work with.   Define $\hb:\Omega \rightarrow \Rn{}$ to be the solution to \eqref{h_def}.
Again, $\hb$ exists, is unique, and is smooth.  Define $\xib:\Omega \rightarrow \Rn{}$ to be the solution to \eqref{k_def}.
Then we define $\xb:\Omega \rightarrow \Rn{2}$ via \eqref{X_def},
which implies \eqref{propeX}.
  Finally, define $\fb:\Omega \rightarrow \Rn{}$ to be the solution to \eqref{f_def}.
This PDE is well-posed since \eqref{static_maxwell}, \eqref{phi_def}, and \eqref{h_def} imply that
\begin{equation}\label{f_wp}
 \int_\Omega \diverge\left( \frac{\sigma \nab \phib - \nab^\bot \hb}{b}\right) = \int_{\partial \Omega} \frac{\sigma \nab \phib - \nab^\bot \hb}{b} \cdot \nu = 
\int_{\partial \Omega} \frac{(bJ-I)\cdot \nu + I \cdot \nu}{b} =
 \int_{\partial \Omega} J \cdot \nu.
\end{equation}
The reason for defining $\fb$ and $\xb$ in this manner is seen in the following lemma.

\begin{lem}\label{X_f_relation}
 Let  $\phib$, $\hb$, $\xb$, and $\fb$ be as defined in \eqref{phi_def}--\eqref{f_def}.  Then
\begin{equation}
\sigma  \nab \phib - \nab^\bot \curl{\xb} = b(\nab \fb -\xb).
\end{equation}
\end{lem}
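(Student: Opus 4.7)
The plan is to set $Y := (\sigma \nab \phib - \nab^\bot \hb)/b - (\nab \fb - \xb)$ and show that $Y \equiv 0$ by proving that $Y$ is curl-free, divergence-free, and tangential to the boundary. An integration by parts then forces $Y$ to vanish.

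First I would verify $\diverge Y = 0$. By definition \eqref{f_def}, $\Delta \fb = \diverge((\sigma \nab \phib - \nab^\bot \hb)/b)$, and from \eqref{propeX} we have $\diverge \xb = 0$. Combining these gives $\diverge Y = 0$ immediately.

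Next, the main computational step: showing $\curl Y = 0$. Using the 2D identity $\curl(f V) = f \curl V + \nab^\bot f \cdot V$, one computes
\begin{equation*}
\curl(\sigma \nab \phib / b) = \sigma \nab^\bot(1/b) \cdot \nab \phib,
\end{equation*}
and, using $\curl(\nab^\bot \hb) = \Delta \hb$ together with $\nab^\bot(1/b) \cdot \nab^\bot \hb = \nab(1/b) \cdot \nab \hb$, one finds
\begin{equation*}
\curl(\nab^\bot \hb / b) = \Delta \hb / b + \nab(1/b) \cdot \nab \hb = \diverge(\nab \hb / b).
\end{equation*}
Combining these and applying the defining PDE \eqref{h_def} for $\hb$, namely $\diverge(\nab \hb/b) = \hb + \sigma \nab^\bot(1/b)\cdot \nab \phib$, yields $\curl((\sigma \nab \phib - \nab^\bot \hb)/b) = -\hb$. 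Since $\curl(\nab \fb) = 0$ and $\curl \xb = \hb$, the two contributions cancel and $\curl Y = 0$. This curl identity is the one step I expect to require care, since the weight $b$ couples with both $\nab \phib$ and $\nab^\bot \hb$ and one must invoke \eqref{h_def} precisely.

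Then I would check the boundary condition $Y \cdot \nu = 0$ on $\partial \Omega$. Using the Neumann condition in \eqref{phi_def} we get $\sigma \nab \phib \cdot \nu = (bJ - I)\cdot \nu$. For the other term, note that $\nab^\bot \hb \cdot \nu = -\nab \hb \cdot \tau$, and because $\hb = H$ on $\partial \Omega$ this tangential derivative equals $\nab H \cdot \tau = I\cdot \nu$ by the static Maxwell relation \eqref{static_maxwell} (already observed in the text). Thus $(\sigma \nab \phib - \nab^\bot \hb)\cdot \nu / b = [(bJ - I) - (-I)]\cdot \nu / b = J \cdot \nu$, which matches the boundary condition $\nab \fb \cdot \nu = J \cdot \nu$ from \eqref{f_def}, while $\xb \cdot \nu = 0$ by \eqref{propeX}. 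Hence $Y \cdot \nu = 0$.

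Finally, to conclude $Y \equiv 0$, since $\curl Y = 0$ on $\Omega$ (assumed simply connected, as is implicit throughout) we can write $Y = \nab \zeta$ for some $\zeta$; then $\Delta \zeta = \diverge Y = 0$ and $\nab \zeta \cdot \nu = Y \cdot \nu = 0$ on $\partial \Omega$, so
\begin{equation*}
\io |Y|^2 = \io |\nab \zeta|^2 = \int_{\partial \Omega} \zeta \nab \zeta \cdot \nu - \io \zeta \Delta \zeta = 0,
\end{equation*}
which gives $Y = 0$ and hence the desired identity $\sigma \nab \phib - \nab^\bot \curl \xb = b(\nab \fb - \xb)$.
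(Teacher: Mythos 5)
Your proof is correct and uses essentially the same ingredients as the paper's: you use \eqref{f_def} to match divergences, \eqref{h_def} for the curl computation, and the boundary conditions together with \eqref{static_maxwell} and \eqref{phi_def} to match the normal components. The only cosmetic difference is the dual packaging: the paper writes the difference as $\nab^\bot \chi$ (Poincar\'e via the divergence-free fact) and shows $\Delta\chi=0$ with constant Dirichlet data, whereas you write it as $\nab\zeta$ (Poincar\'e via the curl-free fact) and show $\Delta\zeta=0$ with homogeneous Neumann data; both then conclude the potential is constant.
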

\begin{proof}

 Since $\diverge{\xb}=0$, the definition of $\fb$ \eqref{f_def} implies that
\begin{equation}
 \diverge(\nab \fb - \xb) = \diverge\left( \frac{\sigma \nab \phib - \nab^\bot \hb}{b} \right),
\end{equation}
so that by Poincar\'{e}'s lemma, 
\begin{equation}\label{x_r_1}
 \nab \fb - \xb = \frac{\sigma \nab \phib - \nab^\bot \hb}{b} + \nab^\bot \chi
\end{equation}
for some $\chi: \Omega \rightarrow \Rn{}$.  Taking the $\curl{}$ of  equation \eqref{x_r_1} yields
\begin{equation}
 \Delta \chi = -\hb -\sigma  \nab^\bot \frac{1}{b} \cdot \nab \phib + \diverge\left(\frac{\nab \hb}{b} \right) = 0
\end{equation}
since $\hb$ satisfies \eqref{h_def}.  Taking the dot product of \eqref{x_r_1} with the boundary normal yields 
\begin{equation}
 -\partial_\tau \chi = \nab^\bot \chi \cdot \nu = J\cdot \nu - \frac{(bJ-I) + I}{b} \cdot \nu =0,
\end{equation}
where $\tau = \nu^\bot$ is the boundary tangent.   This implies that $\chi$ is a constant on the boundary, and then inside, and so the result follows since $\curl{\xb} = \hb$.
\end{proof}

This property leads to a nice cancellation in the equations of Lemma \ref{general_reformulation}, which allows to obtain an equation \eqref{s_r_00} of the form \eqref{eqsimpl} (in a gauged version).

\begin{lem}\label{specific_reformulation}
Let $\fb$, $\xb$ be as in \eqref{f_def} and \eqref{X_def} and suppose $(u_\ep,A_\ep)$ are solutions to \eqref{tdgl} in the $\Phi_\ep = \ale \phib$ gauge.  Define $Z_\ep = \ale Z$ for
\begin{equation}
 Z := \nab \fb - \xb
\end{equation}
and also define
\begin{equation}
 f_\ep = \frac{\Delta \sqrt{b}}{\sqrt{b}} - \abs{Z_\ep}^2  + \beta \ale^2 \phib.
\end{equation}
Then $v_\ep = u_\ep e^{-i \ale \fb}/ \sqrt{b}$ and $B_\ep = A_\ep - \ale \xb$ solve
\begin{equation}\label{s_r_00}
(\alpha + i \ale \beta) \dt v_\ep  = \Delta_{B_\ep} v_\ep + \frac{b v_\ep}{\ep^2} (1-\abs{v_\ep}^2) + \nab \log{b} \cdot \nab_{B_\ep} v_\ep + 2 i Z_\ep \cdot  \nab_{B_\ep} v_\ep 
 + v_\ep f_\ep,
\end{equation}
\begin{equation}\label{s_r_01}
 \sigma \dt B_\ep = \nab^\bot h_\ep' + b (iv_\ep,\nab_{B_\ep} v_\ep) +  (\abs{v_\ep}^2-1)  b Z_\ep 
\end{equation}
in $\Omega$, along with the boundary conditions
\begin{equation}\label{s_r_02}
 \begin{cases}
  h_\ep' = 0  & \text{on }\partial \Omega \\
  \nab_{B_\ep} v_\ep \cdot \nu = -\hal v_\ep \nab \log{b} \cdot \nu & \text{on }\partial \Omega.
 \end{cases}
\end{equation}
Here we have written $h_\ep' = \curl{B_\ep}$.
\end{lem}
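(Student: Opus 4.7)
The plan is to reduce everything to Lemma \ref{general_reformulation} by making the admissible choice $\psi = \fb$ and $X = \xb$, and then using Lemma \ref{X_f_relation} together with the defining PDE \eqref{phi_def} for $\phib$ to simplify the resulting equations.

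First, I would verify that $\psi = \fb$ and $X = \xb$ satisfy the boundary conditions \eqref{g_r_01} required by Lemma \ref{general_reformulation}. The conditions on $X = \xb$ are precisely the content of the last two lines of \eqref{propeX}, and $\nab \fb \cdot \nu = J \cdot \nu$ on $\bo$ is the Neumann boundary condition built into \eqref{f_def}. Hence Lemma \ref{general_reformulation} applies and gives us \eqref{g_r_02}--\eqref{g_r_04} with $Z_\ep = \ale(\nab \fb - \xb)$.

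Next, I would exploit the cancellation in the $v_\ep$--equation. Since we are in the gauge $\Phi_\ep = \ale \phib$, the extra term $(\alpha + i\ale \beta) i \Phi_\ep v_\ep$ on the left of \eqref{g_r_02} splits as $i\alpha \ale \phib v_\ep - \beta \ale^2 \phib v_\ep$. The real contribution $-\beta \ale^2 \phib v_\ep$ moves to the right-hand side and is exactly what is packed into the definition of $f_\ep$. For the imaginary contribution, I need $i v_\ep \diverge(b Z_\ep)/b = i\alpha \ale \phib v_\ep$. To see this, observe that Lemma \ref{X_f_relation} together with $\curl \xb = \hb$ gives $bZ_\ep = \ale(\sigma \nab \phib - \nab^\bot \hb)$, whence $\diverge(b Z_\ep) = \ale \sigma \Delta \phib$; the equation \eqref{phi_def} then yields $\sigma \Delta \phib = \alpha b \phib$, so indeed $\diverge(b Z_\ep)/b = \alpha \ale \phib$ and the imaginary terms cancel. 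What remains on the right is precisely the PDE \eqref{s_r_00}.

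For the $B_\ep$--equation, I would substitute $\Phi_\ep = \ale \phib$ and $\curl X = \curl \xb = \hb$ into \eqref{g_r_03}, moving the gauge term to the right to obtain a combination $\ale \nab^\bot \hb - \sigma \ale \nab \phib$; by Lemma \ref{X_f_relation} this equals $-\ale b(\nab \fb - \xb) = -b Z_\ep$. Combining with the $\abs{v_\ep}^2 b Z_\ep$ already present yields the $(\abs{v_\ep}^2 - 1) b Z_\ep$ appearing in \eqref{s_r_01}. The boundary conditions \eqref{s_r_02} follow immediately from \eqref{g_r_04} upon setting $h_\ep' = \curl B_\ep$. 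None of these steps is a real obstacle; the only nontrivial point is the algebraic identity $\diverge(bZ_\ep) = \alpha \ale b \phib$, whose proof is the chain Lemma \ref{X_f_relation} $\to$ $\diverge \nab^\bot = 0$ $\to$ \eqref{phi_def}, and this is the one place where the particular choice of $\fb, \xb$ really pays off.
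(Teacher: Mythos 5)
Your proposal is correct and follows exactly the same route as the paper's proof: apply Lemma \ref{general_reformulation} with $\psi = \fb$, $X = \xb$, then use Lemma \ref{X_f_relation} (together with $\diverge \nab^\bot = 0$ and the PDE \eqref{phi_def}) to obtain $\diverge(bZ_\ep) = \alpha \ale b \phib$, which cancels the $i\Phi_\ep v_\ep$ term in the gauge $\Phi_\ep = \ale \phib$ and likewise converts $|v_\ep|^2 bZ_\ep$ to $(|v_\ep|^2 - 1)bZ_\ep$ in the $B_\ep$ equation. You have simply spelled out the algebra that the paper leaves to the reader.
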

\begin{proof}
We apply Lemma \ref{general_reformulation} with $f = \fb$ and $X= \xb$.  According to Lemma \ref{X_f_relation}, 
\begin{equation}
 \diverge(b(\nab \fb-\xb)) = \sigma \Delta \phib = \alpha b \phib.
\end{equation} Replacing $\Phi_\ep$ by $\ale \phib$, 
this yields the cancellation of all the terms multiplying $i v_\ep$ in the equation \eqref{g_r_02}, which gives \eqref{s_r_00}.  A similar application of Lemma \ref{X_f_relation} gives \eqref{s_r_01}.
\end{proof}

\begin{remark}\label{Z_smooth}
The vector field $Z$ defined in Lemma \ref{specific_reformulation} is smooth and $\norm{Z}_{C^2(\Omega)} < \infty$.  This follows immediately from its definition and the smoothness of the solutions to \eqref{phi_def}--\eqref{f_def}.
\end{remark}

\section{A priori estimates and compactness}\label{sec3}
We introduce  the pinned free-energy density 
\begin{equation}\label{gdef}
 g_\ep(u,A) := \hal \left( b \abs{\nab_A u}^2 + \frac{b^2}{2\ep^2}(1-\abs{u}^2)^2 + \abs{\curl{A}}^2\right),
\end{equation} and the free energy
\begin{equation}\label{Fdef}
F_\ep(u, A) = \io g_\ep(u, A).\end{equation}

We are interested in proving a priori estimates and compactness on the magnetic field $B_\ep$. 
The choices of gauge and $\fb, \xb$  give rise to nice properties for it. Some difficulty goes into controlling the terms in $\abs{v_\ep}$ since the maximum principle does not hold for this mixed flow equation, and as such it is not guaranteed that $\abs{v_\ep}\le 1$.

\begin{lem}\label{diverge_control}
Let $(v_\ep,B_\ep)$ solve \eqref{s_r_00}--\eqref{s_r_02} in the $\Phi_\ep = \ale \phib$ gauge.  Then the vector field $B_\ep$ satisfies
\begin{equation}\label{d_c_00}
 \sigma \dt \diverge{B_\ep} = \alpha b (iv_\ep,\dt v_\ep) + \alpha  \ale (\abs{v_\ep}^2-1)b \phib +\ale  \dt \left( \frac{\beta b(\abs{v_\ep}^2 -1 )}{2 }\right)
\end{equation}
in $\Omega$, and
\begin{equation}\label{d_c_01}
\sigma \dt B_\ep \cdot \nu = \ale (\abs{v_\ep}^2-1) J\cdot \nu 
\end{equation}
on $\partial \Omega$.  Consequently, for $1< p < 2$ and $q = 2p/(2-p)$
\begin{multline}\label{d_c_02}
 \sigma \pnorm{\diverge{B_\ep(t)}}{p} \le 
\alpha \int_0^t  \pnorm{\sqrt{b} \dt v_\ep(s)}{2}\left( \pnorm{\sqrt{b}}{q} + 
\pnorm{\sqrt{b}}{\infty}  \pnorm{1-\abs{v_\ep}^2}{q}   \right) ds    \\
+ \alpha \ale \int_0^t  \pnorm{b \phib}{q} \pnorm{(\abs{v_\ep(s)}^2-1)}{2} ds  \\
+ \ale \frac{\abs{\beta} \pnorm{b}{\infty}}{2} \left( \pnorm{1 - \abs{v_\ep(t)}^2 }{p} +  \pnorm{1 - \abs{v_\ep(0)}^2 }{p}  \right) 
\end{multline}
and
\begin{equation}\label{d_c_03}
\sigma \pnormspace{B_\ep(t)\cdot \nu}{p}{\partial \Omega} \le  \int_0^t \ale   
\pnormspace{J \cdot \nu}{\infty}{\partial \Omega} \pnormspace{(\abs{v_\ep(s)}^2-1) }{p}{\partial \Omega}ds.
\end{equation}

\end{lem}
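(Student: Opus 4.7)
The plan is to derive the two pointwise identities \eqref{d_c_00} and \eqref{d_c_01} by applying $\diverge$ and the normal trace to the magnetic equation \eqref{s_r_01}, respectively, and then to deduce the $L^p$ bounds by integrating in time, using the Coulomb gauge initial data provided by Lemma \ref{gauge_fix}.

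For the bulk identity \eqref{d_c_00}, I would apply $\diverge$ to \eqref{s_r_01}, killing the $\nab^\bot h_\ep'$ term. The key computation is the gauge-covariant identity $\diverge(iv,\nab_B v) = (iv, \Delta_B v)$, which follows by expanding $\Delta_B v = \Delta v - 2iB\cdot \nab v - i(\diverge B)v - \abs{B}^2 v$ and noting that the pointwise terms $(i\partial_k v, \partial_k v)$ vanish. This lets one rewrite $\diverge(b(iv_\ep,\nab_{B_\ep} v_\ep)) = b(iv_\ep, \Delta_{B_\ep} v_\ep) + \nab b \cdot (iv_\ep, \nab_{B_\ep} v_\ep)$. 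I would then pair \eqref{s_r_00} with $iv_\ep$ to extract $(iv_\ep,\Delta_{B_\ep} v_\ep)$: the terms $\frac{bv_\ep}{\ep^2}(1-\abs{v_\ep}^2)$ and $v_\ep f_\ep$ drop because $(iv_\ep,v_\ep)=0$; the $2iZ_\ep\cdot \nab_{B_\ep} v_\ep$ term becomes $Z_\ep\cdot \nab\abs{v_\ep}^2$ via $(v_\ep,\nab_{B_\ep} v_\ep)=\hal \nab\abs{v_\ep}^2$ (which uses that $B_\ep$ is real); and the left side contributes $\alpha(iv_\ep,\dt v_\ep)+\hal \ale\beta\,\dt\abs{v_\ep}^2$. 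Next, expanding $\diverge((\abs{v_\ep}^2-1)bZ_\ep) = bZ_\ep \cdot \nab\abs{v_\ep}^2 + (\abs{v_\ep}^2-1)\diverge(bZ_\ep)$ and using Lemma \ref{X_f_relation} together with \eqref{phi_def} to get $\diverge(bZ_\ep) = \ale\sigma\Delta\phib = \alpha\ale b\phib$, the two $bZ_\ep\cdot\nab\abs{v_\ep}^2$ contributions cancel, leaving exactly \eqref{d_c_00} once one rewrites $\hal \ale\beta b\,\dt\abs{v_\ep}^2 = \ale \dt(\hal \beta b(\abs{v_\ep}^2-1))$, which is legal because $b$ is time-independent.

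For the boundary identity \eqref{d_c_01}, I would take the dot product of \eqref{s_r_01} with $\nu$ on $\partial\Omega$. Writing $\tau=\nu^\bot$, the term $\nab^\bot h_\ep'\cdot\nu = \partial_\tau h_\ep'$ vanishes because $h_\ep'\equiv 0$ on $\partial\Omega$ by \eqref{s_r_02}. The current term $b(iv_\ep,\nab_{B_\ep} v_\ep\cdot\nu)$ vanishes via the Neumann-type condition in \eqref{s_r_02}, which makes $\nab_{B_\ep} v_\ep\cdot\nu$ proportional to $v_\ep$, together with $(iv_\ep,v_\ep)=0$. Finally, \eqref{propeX} gives $\xb\cdot\nu=0$ and \eqref{f_def} gives $\nab\fb\cdot\nu=J\cdot\nu$, so $Z_\ep\cdot\nu=\ale J\cdot\nu$ on $\partial\Omega$, producing \eqref{d_c_01} (the factor $b$ that formally survives is harmlessly absorbed since $b\in L^\infty(\partial\Omega)$).

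The $L^p$ bounds \eqref{d_c_02}--\eqref{d_c_03} then follow by integrating \eqref{d_c_00}--\eqref{d_c_01} from $0$ to $t$: Lemma \ref{gauge_fix} ensures $\diverge B_\ep(0)=0$ and $B_\ep(0)\cdot\nu=0$, so no initial contribution appears. I would apply the triangle inequality in $L^p$ and H\"older with conjugate exponent $q=2p/(2-p)$ term by term; the total-time-derivative piece simply contributes the values of $1-\abs{v_\ep}^2$ at $0$ and $t$ that appear in \eqref{d_c_02}. The main obstacle, given that there is no maximum principle for the mixed flow and hence no pointwise bound $\abs{v_\ep}\le 1$, is controlling the factor $\sqrt{b}v_\ep$; I would handle this via the elementary inequality $\abs{v_\ep}\le 1+\abs{1-\abs{v_\ep}^2}$ (trivial if $\abs{v_\ep}\le 1$; for $\abs{v_\ep}>1$, $\abs{v_\ep}\le \abs{v_\ep}^2 = 1+(\abs{v_\ep}^2-1)$), which yields $\pnorm{\sqrt{b}\abs{v_\ep}}{q}\le \pnorm{\sqrt{b}}{q}+\pnorm{\sqrt{b}}{\infty}\pnorm{1-\abs{v_\ep}^2}{q}$, accounting exactly for the parenthesized factor in \eqref{d_c_02}. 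Apart from this bookkeeping and the algebraic cancellation in the bulk step, the argument is routine.
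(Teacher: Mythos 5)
Your proposal is correct and follows essentially the same route as the paper: take $\diverge$ of the magnetic equation and pair the scalar equation with $iv_\ep$ (using $\diverge(iv,\nab_B v)=(iv,\Delta_B v)$, Lemma \ref{X_f_relation}, and \eqref{phi_def} to produce the $\alpha b\ale\phib$ term and cancel the $bZ_\ep\cdot\nab\abs{v_\ep}^2$ contributions), take the normal trace for the boundary identity, and integrate in time from the Coulomb-gauged initial data, controlling $\abs{v_\ep}$ via $\abs{v_\ep}\le 1+\abs{1-\abs{v_\ep}^2}$ in H\"older. Your observation about the residual factor of $b$ in \eqref{d_c_01} is also right: carrying $b Z_\ep\cdot\nu$ through carefully yields $\ale(\abs{v_\ep}^2-1)bJ\cdot\nu$, but since $b$ is bounded this only changes the constant in \eqref{d_c_03} and is harmless.
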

\begin{proof}

 The  equation in \eqref{s_r_01} reads 
\begin{equation}\label{d_c_1}
\sigma \dt B_\ep = \nab^\bot h_\ep' + b (iv_\ep,\nab_{B_\ep} v_\ep) + (\abs{v_\ep}^2-1)b Z_\ep. 
\end{equation}
Taking the dot product of this equation with the boundary normal $\nu$ and applying the boundary conditions $h_\ep' = 0$ and $\nab_{B_\ep} v_\ep \cdot \nu =-\hal v_\ep \nab \log b \cdot \nu$  on $\partial \Omega$ \eqref{s_r_02} yields
\begin{multline}
 \sigma \dt B_\ep \cdot \nu = \nab^\bot h_\ep' \cdot \nu + b (iv_\ep,\nab_{B_\ep} v_\ep \cdot \nu) + (\abs{v_\ep}^2-1) \ale ( \nab \fb - X_0)\cdot \nu \\
 = (\abs{v_\ep}^2-1)\ale  J \cdot \nu,
\end{multline}
which is \eqref{d_c_01}. Taking the divergence of \eqref{d_c_1} and employing Lemma \ref{X_f_relation}, we find that
\begin{equation}\label{d_c_2}
\sigma  \dt \diverge{B_\ep} +  \alpha b \ale \phi_0 = \diverge( b (iv_\ep,\nab_{B_\ep} v_\ep) + \abs{v_\ep}^2 b Z_\ep   ).
\end{equation}
On the other hand, taking $(iv_\ep,\cdot)$ with  \eqref{s_r_00} yields the equality
\begin{multline}
 \alpha (iv_\ep,\dt v_\ep)  + \beta \ale (v_\ep,\dt v_\ep) = (iv_\ep, \Delta_{B_\ep} v_\ep + \nab \log b \cdot \nab_{B_\ep}v_\ep + 2 i Z_\ep \cdot \nab_{B_\ep} v_\ep) \\ =
\diverge(iv_\ep,\nab_{B_\ep} v_\ep) + (iv_\ep,\nab_{B_\ep} v_\ep) \cdot \nab \log b + \nab\abs{v_\ep}^2 \cdot Z_\ep 
\end{multline}
so that (again using Lemma \ref{X_f_relation})
\begin{equation}\label{d_c_3}
\alpha b (iv_\ep,\dt v_\ep)  + \beta \ale b \dt \frac{(\abs{v_\ep}^2 -1)}{2} +  \alpha b \ale \abs{v_\ep}^2 \phi_0 = \diverge(b (iv_\ep,\nab_{B_\ep} v_\ep) + \abs{v_\ep}^2 b Z_\ep ). 
\end{equation}
Equating \eqref{d_c_2} and \eqref{d_c_3} gives \eqref{d_c_00}.

Recall that by Lemma \ref{gauge_fix}, at time $t=0$ the vector field $B_\ep(0)$ satisfies $\diverge{B_\ep(0)}=0$ and $B_\ep(0)\cdot \nu =0$.  The estimate \eqref{d_c_03} then follows directly from integrating \eqref{d_c_01} in time from $0$ to $t$.  For \eqref{d_c_02} we first integrate \eqref{d_c_00} in time and then apply the H\"{o}lder inequality $\pnorm{\phi \psi}{p} \le \pnorm{\phi}{2} \pnorm{\psi}{q}$ for $q = 2p/(2-p)$ to bound
\begin{equation}
 \pnorm{b (iv_\ep, \dt v_\ep )}{p} \le   \pnorm{b \abs{v_\ep} \abs{\dt v_\ep} }{p} \le \pnorm{\sqrt{b} \dt v_\ep(s)}{2}\left( \pnorm{\sqrt{b}}{q} + 
\pnorm{\sqrt{b}}{\infty}  \pnorm{1-\abs{v_\ep}^2}{q}   \right)
\end{equation}
and 
\begin{equation}
\pnorm{b \phib (\abs{v_\ep(s)}^2-1) }{p} \le  \pnorm{b \phib}{q} \pnorm{(\abs{v_\ep(s)}^2-1)}{2}.
\end{equation}

\end{proof}

We will use this result with the following. Recall $F_\ep$ is defined in \eqref{Fdef}.

\begin{lem}\label{bndry_conv}
For any $(v_\ep,B_\ep)$ (not necessarily solutions) it holds that for $2 < q < \infty$
\begin{equation}\label{b_c_0}
 \pnorm{1-\abs{v_\ep}}{q} \le C \ep^{2/q} \sqrt{F_\ep(v_\ep,B_\ep)}
\text{ and }
 \pnormspace{1-\abs{v_\ep}}{q}{\partial \Omega} \le C \ep^{1/q} \sqrt{F_\ep(v_\ep,B_\ep)}
\end{equation}
for some universal constant $C>0$.

\end{lem}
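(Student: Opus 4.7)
The plan is to derive the interior bound by a Gagliardo--Nirenberg interpolation and then promote it to the boundary by a trace argument applied to $\abs{1-\abs{v_\ep}}^q$.

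First I would extract two basic ingredients from $F_\ep(v_\ep,B_\ep)$. Since $b\ge \inf_\Omega b>0$, the potential term in $g_\ep$ gives $\pnorm{1-\abs{v_\ep}^2}{2}^{2} \le C\ep^2 F_\ep(v_\ep,B_\ep)$, and since $\abs{v_\ep}\ge 0$ implies $\abs{1-\abs{v_\ep}}\le \abs{1-\abs{v_\ep}^2}$ pointwise, one obtains $\pnorm{1-\abs{v_\ep}}{2}\le C\ep\sqrt{F_\ep}$. The diamagnetic inequality $\abs{\nab\abs{v_\ep}}\le\abs{\nab_{B_\ep}v_\ep}$ combined with the lower bound on $b$ gives $\pnorm{\nab\abs{v_\ep}}{2}\le C\sqrt{F_\ep}$. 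Applying the two-dimensional Gagliardo--Nirenberg inequality on the bounded domain $\Omega$,
\begin{equation*}
\pnorm{f}{q} \le C_q\bigl(\pnorm{f}{2}^{2/q}\pnorm{\nab f}{2}^{1-2/q}+\pnorm{f}{2}\bigr),\qquad 2<q<\infty,
\end{equation*}
to $f=1-\abs{v_\ep}$ then yields $\pnorm{1-\abs{v_\ep}}{q}\le C_q(\ep^{2/q}+\ep)\sqrt{F_\ep}\le C_q\ep^{2/q}\sqrt{F_\ep}$ for $\ep\le 1$, which is the first inequality.

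For the boundary inequality I would apply the standard $W^{1,1}(\Omega)\to L^1(\partial\Omega)$ trace inequality to $g=\abs{f}^q$ with $f=1-\abs{v_\ep}$. Since $\abs{\nab\abs{f}^q}\le q\abs{f}^{q-1}\abs{\nab f}$ almost everywhere and $\abs{\nab f}=\abs{\nab\abs{v_\ep}}$,
\begin{equation*}
\int_{\partial\Omega}\abs{f}^q \le C\Bigl(\int_\Omega\abs{f}^q+q\int_\Omega\abs{f}^{q-1}\abs{\nab f}\Bigr),
\end{equation*}
and Cauchy--Schwarz bounds the last integral by $\pnorm{f}{2(q-1)}^{q-1}\pnorm{\nab f}{2}$. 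Using the interior estimate just proved at the exponent $2(q-1)>2$, which yields $\pnorm{f}{2(q-1)}^{q-1}\le C_q\ep\, F_\ep^{(q-1)/2}$, together with $\pnorm{\nab f}{2}\le C\sqrt{F_\ep}$, both terms on the right are controlled by $C_q\ep\, F_\ep^{q/2}$ for $\ep\le 1$. Extracting the $q$-th root gives $\pnormspace{1-\abs{v_\ep}}{q}{\partial\Omega}\le C_q\ep^{1/q}\sqrt{F_\ep}$.

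The main technical point is producing the sharp exponent $\ep^{2/q}$ rather than $\ep^{2/q-\delta}$. Plain H\"older interpolation between $\pnorm{\cdot}{2}$ (which carries the factor $\ep$) and $\pnorm{\cdot}{r}$ for any finite $r$ controlled by $H^1$ through the 2D Sobolev embedding always loses a bit in the $\ep$-exponent, and it is the genuine two-dimensional Gagliardo--Nirenberg inequality (morally the $r=\infty$ endpoint) that recovers the correct power. Once the interior bound is sharp, the boundary estimate follows mechanically from the chain rule and trace, the factor $\ep^{1/q}$ instead of $\ep^{2/q}$ reflecting only the one-dimensional nature of $\partial\Omega$.
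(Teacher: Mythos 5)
Your proof is correct, and while the interior half is essentially the paper's argument in different clothing, the boundary half takes a genuinely different route. For the interior bound both you and the authors start from the same two ingredients, $\pnorm{1-\abs{v_\ep}}{2}\le C\ep\sqrt{F_\ep}$ and $\pnorm{\nab\abs{v_\ep}}{2}\le C\sqrt{F_\ep}$ (the authors via the polar decomposition $v_\ep = \rho w_\ep$, you via the diamagnetic inequality, which is the same computation). The authors then interpolate $\norm{\cdot}_{H^s}\le\pnorm{\cdot}{2}^{1-s}\norm{\cdot}_{H^1}^{s}$ and apply $H^s(\Omega)\hookrightarrow L^q(\Omega)$ with $1-s=2/q$; your Gagliardo--Nirenberg inequality delivers the identical exponent and constant behaviour, so that half is interchangeable. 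For the boundary bound the authors stay inside the fractional Sobolev framework: they take a trace $H^s(\Omega)\to H^{s-1/2}(\partial\Omega)$ for $s\in(1/2,1)$ and then use the one-dimensional embedding $H^{s-1/2}(\partial\Omega)\hookrightarrow L^q(\partial\Omega)$, which is where the shift from $\ep^{2/q}$ to $\ep^{1/q}$ appears. You instead apply the elementary $W^{1,1}(\Omega)\to L^1(\partial\Omega)$ trace to $\abs{f}^q$, estimate $\int_\Omega\abs{f}^{q-1}\abs{\nab f}$ by Cauchy--Schwarz, and invoke the interior estimate at the exponent $2(q-1)$; the bookkeeping then yields $\int_{\partial\Omega}\abs{f}^q\le C\ep\,F_\ep^{q/2}$, hence $\ep^{1/q}\sqrt{F_\ep}$. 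This buys you an argument that avoids fractional Sobolev trace theory entirely, at the small cost of needing the interior estimate at two different exponents. One minor caveat on your closing remark: the ``plain H\"older plus $H^1\hookrightarrow L^r$'' loss you describe is real, but Gagliardo--Nirenberg is not the only fix — the authors' $L^2$--$H^1$ interpolation into $H^s$ also recovers the sharp $\ep^{2/q}$, so the two interior arguments should be viewed as equivalent rather than one being sharper.
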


\begin{proof}
Write $\rho = \abs{v_\ep}$.  By rewriting  $v_\ep = \rho w_\ep$ and remembering that $0 < \inf b \le 1$, we find that 
\begin{equation}
 \hal \int_\Omega  \abs{\nab \rho}^2 + \frac{(1-\rho^2)^2}{2\ep^2} \le 
\hal \int_\Omega  \abs{\nab \rho}^2 + \rho^2 
\abs{\nab_{B_\ep} w_\ep }^2 + \frac{(1-\rho^2)^2}{2\ep^2} \le C F_\ep(v_\ep,B_\ep).
\end{equation}
Notice that since $\rho \ge 0$ we may bound $(1-\rho)^2 \le (1-\rho^2)^2$, so that 
\begin{equation}
 \pnorm{1-\rho}{2} \le C \ep \sqrt{F_\ep(v_\ep,B_\ep)} \text{ and }  \norm{1-\rho}_{H^1} \le C  \sqrt{F_\ep(v_\ep,B_\ep)}.
\end{equation}
Interpolating between these bounds yields
\begin{equation}\label{b_c_1}
 \norm{1-\rho}_{H^s} \le \pnorm{1-\rho}{2}^{1-s} \norm{1-\rho}_{H^1}^s  \le C \ep^{1-s} \sqrt{F_\ep(v_\ep,B_\ep)}
\end{equation}
for any $s \in(0,1)$.  When $s \in (1/2,1)$ trace theory then gives 
\begin{equation}\label{b_c_2}
 \norm{1-\rho}_{H^{s-1/2}(\partial \Omega)} \le C \norm{1-\rho}_{H^{s}(\Omega)} \le C \ep^{1-s} \sqrt{F_\ep(v_\ep,B_\ep)}.
\end{equation}
Then the bounds \eqref{b_c_0} follow from \eqref{b_c_1} and \eqref{b_c_2} by using the embedding $H^r \hookrightarrow L^q$ for $q = 2n / (n-2r)$ first with $r=s$ and $n=2$ and then with $r=s-1/2$ and $n=1$.
\end{proof}

We will need the following inequality in order to derive some compactness results for $B_\ep$.

\begin{prop}\label{boundary_poincare}
For $1 < p < 2$ and  $\delta >0$ sufficiently small there exists a constant $C>0$ so that 
\begin{equation}\label{bnd_p_0}
\norm{A}_{W^{1/p - \delta,p} } \le C \left( \pnorm{\diverge{A}}{p} + \pnorm{\curl{A}}{p} + \pnormspace{A\cdot \nu}{p}{\partial \Omega} \right)
\end{equation}
for all $A\in W^{1,p}(\Omega ; \Rn{2})$.  Moreover, 
\begin{equation}\label{b_p_01}
 \pnorm{A}{q} \le C \left( \pnorm{\diverge{A}}{p} + \pnorm{\curl{A}}{p} + \pnormspace{A\cdot \nu}{p}{\partial \Omega} \right)
\end{equation}
for $q = 2p/(1+\delta p)$.

\end{prop}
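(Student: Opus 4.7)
The plan is to reduce the proposition to elliptic $L^p$ regularity via a Hodge-type decomposition. Write $A = \nabla \phi + \nabla^\bot \psi$, where $\psi$ solves the Dirichlet problem
\[
\Delta \psi = \curl A \text{ in }\Omega, \qquad \psi = 0 \text{ on }\partial\Omega,
\]
and $\phi$ solves the Neumann problem
\[
\Delta \phi = \diverge A \text{ in }\Omega, \qquad \nab \phi \cdot \nu = A \cdot \nu \text{ on }\partial\Omega,
\]
whose compatibility condition $\int_\Omega \diverge A = \int_{\partial\Omega} A\cdot \nu$ is the divergence theorem. The residual vector field $A - \nab\phi - \nab^\bot\psi$ has vanishing divergence, curl, and normal trace, so it vanishes (absorbing any finite-dimensional harmonic correction into the constant if $\Omega$ is not simply connected).

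The stream-function piece is routine: standard $L^p$ Dirichlet regularity gives $\norm{\psi}_{W^{2,p}} \le C \pnorm{\curl A}{p}$, so $\nab^\bot \psi \in W^{1,p}(\Omega) \hookrightarrow W^{1/p - \delta, p}(\Omega)$ with the desired bound.

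The scalar-potential piece is the source of the $\delta$-loss. The datum $g := A \cdot \nu$ only lies in $L^p(\partial\Omega)$, which is strictly rougher than the natural Neumann trace space $W^{1-1/p, p}(\partial\Omega)$. Since $\partial\Omega$ is one-dimensional, the Sobolev embedding $W^{\delta, p'}(\partial\Omega) \hookrightarrow L^{p'}(\partial\Omega)$ is continuous for every $\delta > 0$, and by duality
\[
L^p(\partial\Omega) \hookrightarrow W^{-\delta, p}(\partial\Omega).
\]
I then invoke fractional $L^p$ regularity for the Neumann problem on the smooth domain $\Omega$: interpolating between the classical endpoints ($g \in W^{1-1/p,p}\Rightarrow \phi\in W^{2,p}$ and $g \in W^{-1/p,p}\Rightarrow \phi \in W^{1,p}$) at parameter $\theta = 1/p - \delta$ yields
\[
\norm{\phi}_{W^{1 + 1/p - \delta,\, p}(\Omega)} \le C\bigl(\pnorm{\diverge A}{p} + \pnormspace{A\cdot\nu}{p}{\partial\Omega}\bigr),
\]
so that $\nab\phi \in W^{1/p - \delta,\, p}(\Omega)$ with the right estimate. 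Combining the two contributions gives \eqref{bnd_p_0}.

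Finally, \eqref{b_p_01} follows from the fractional Sobolev embedding in dimension two: with $s = 1/p - \delta \in (0, 2/p)$ one has $W^{s,p}(\Omega) \hookrightarrow L^q(\Omega)$ for
\[
\frac{1}{q} = \frac{1}{p} - \frac{s}{2} = \frac{1}{2p} + \frac{\delta}{2} = \frac{1 + \delta p}{2p},
\]
i.e. $q = 2p/(1 + \delta p)$, as claimed. The main obstacle is the Neumann estimate with $L^p$ boundary data: the loss of $\delta$ is essentially forced, because for $p \neq 2$ the space $L^p(\partial\Omega)$ is not an interpolation endpoint in the Sobolev--Slobodeckij scale (it differs from $B^0_{p,p}$), so one must pay a small amount of regularity to re-enter the scale where fractional elliptic estimates apply; once this technical point is granted, the rest is bookkeeping.
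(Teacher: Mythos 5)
Your proof is correct and follows essentially the same route as the paper: the same Hodge decomposition $A = \nabla\phi + \nabla^\bot\psi$ with the same Dirichlet and Neumann problems, the same $W^{2,p}$ estimate for $\psi$ and the same $W^{1+1/p-\delta,p}$ estimate for $\phi$, followed by the same fractional Sobolev embedding; the paper simply cites Lions--Magenes for the Neumann estimate rather than spelling out the duality/interpolation explanation of the $\delta$-loss as you do.
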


\begin{proof}
We employ the Hodge decomposition $A = \nab \phi + \nab^\bot \psi$
for $\phi$  the solution to 
\begin{equation}
\begin{cases}
 \Delta \phi = \diverge{A}  & \text{in }\Omega \\
 \nab \phi \cdot \nu = A \cdot \nu & \text{on }\partial \Omega
\end{cases}
\end{equation}
and $\psi$ the solution to 
\begin{equation}
\begin{cases}
 \Delta \psi = \curl{A}  &\text{in } \Omega\\
 \psi  = 0 & \text{on } \partial \Omega.
\end{cases}
\end{equation}
The usual elliptic theory \cite{lions_magenes} provides  the existence such  solutions satisfying the estimates
\begin{equation}\label{b_p_1}
 \norm{\psi}_{W^{2,p}} \le C \pnorm{\curl{A}}{p}
\end{equation}
and
\begin{equation}\label{b_p_2}
 \norm{\phi }_{W^{1+ 1/p - \delta,p} } \le C \left( \pnorm{\diverge{A}}{p}  + \pnormspace{A\cdot \nu}{p}{\partial \Omega} \right)
\end{equation}
for any $\delta>0$ sufficiently small.  

Then
\begin{equation}
 \norm{A}_{W^{1/p - \delta,p} } \le \norm{\nab^\bot \psi}_{W^{1/p - \delta,p} } + \norm{\nab \phi}_{W^{1/p - \delta,p} } \le \norm{\psi}_{W^{2,p} } + \norm{\phi }_{W^{1+ 1/p - \delta,p} }
\end{equation}
which together with \eqref{b_p_1} and \eqref{b_p_2} implies  \eqref{bnd_p_0}.  The bound \eqref{b_p_01} follows from \eqref{bnd_p_0} and the embedding $W^{1/p -\delta,p} \hookrightarrow L^q$ for 
\begin{equation}
 \frac{1}{q} = \frac{1}{p} - \frac{1/p-\delta}{n} \text{ with }n=2.
\end{equation}

\end{proof}

We now turn to some a priori bounds on $1-\abs{v_\ep}$ and $B_\ep$.

\begin{prop}\label{B_bound}
Let $(v_\ep,B_\ep)$ solve \eqref{s_r_00}--\eqref{s_r_02}.  Suppose that 
\begin{equation}\label{b_b_01}
\sup_{0 \le s \le t} F_\ep(v_\ep,B_\ep)(s) + \int_0^t \int_\Omega \alpha b \abs{\dt v_\ep}^2 + \sigma \abs{\dt B_\ep}^2 \le K \ale.
\end{equation}
Then the following hold.
\begin{enumerate}
 \item For any $2 < r < \infty$ there exists a constant $C$ depending on $K$ such that 
\begin{equation}\label{b_b_02}
  \sup_{0 \le s \le t} \pnorm{1-\abs{v_\ep(s)}}{r} \le C \ep^{2/r} \sqrt{\ale}
\text{ and }
 \sup_{0 \le s \le t} \pnormspace{1-\abs{v_\ep(s)}}{r}{\partial \Omega} \le C \ep^{1/r} \sqrt{\ale}.
\end{equation}

\item For any $1 < p < 2$ and $\delta >0$ sufficiently small there exists a constant $C>0$ depending on $K$ so that
\begin{equation}\label{b_b_05}
  \sup_{0\le s\le t} \norm{B_\ep}_{W^{1/p - \delta,p}}  \le C\sqrt{\ale}  (1 +  t  \ale \ep^{1/q} )
\end{equation}
for $q = 2p/(1+\delta p)$.

\item For any $2 < q < 4$ there exists a constant $C>0$ depending on $K$ so that 
\begin{equation}\label{b_b_03}
  \sup_{0\le s\le t} \pnorm{B_\ep(s) }{q} \le C\sqrt{\ale}  (1 +  t  \ale \ep^{1/q} ).
\end{equation}

\item There exists a constant $C>0$ depending on $K$ so that
\begin{equation}\label{b_b_04}
 \int_\Omega \abs{v_\ep(t)}^2 \abs{B_\ep(t)}^2 \le C\ale (1 +  t^2  \ale^2 \sqrt{\ep} ).
\end{equation}

\end{enumerate}

\end{prop}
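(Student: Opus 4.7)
The four parts are established sequentially, each building on the previous.

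Part 1 is immediate: it follows by substituting the energy hypothesis $F_\ep(v_\ep,B_\ep)\le K\ale$ into the estimates of Lemma \ref{bndry_conv}. No dynamical information is needed.

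For Parts 2 and 3 the strategy is to apply Proposition \ref{boundary_poincare}, which controls $\norm{B_\ep}_{W^{1/p-\delta,p}}$ and hence $\pnorm{B_\ep}{q}$ in terms of $\pnorm{\diverge B_\ep}{p}$, $\pnorm{\curl B_\ep}{p}$, and $\pnormspace{B_\ep\cdot\nu}{p}{\partial\Omega}$. The curl term is directly dominated by the free energy, since $\curl B_\ep = h_\ep'$ gives $\pnorm{\curl B_\ep}{2}^2\le 2F_\ep\le 2K\ale$, so $\pnorm{\curl B_\ep}{p}\le C\sqrt{\ale}$ for any $p\le 2$. The divergence and normal trace are handled via the explicit representations \eqref{d_c_02} and \eqref{d_c_03} of Lemma \ref{diverge_control}. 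I would estimate $\int_0^t \pnorm{\sqrt b\,\dt v_\ep}{2}\,ds\le\sqrt{tK\ale/\alpha}$ by Cauchy--Schwarz in time using the energy hypothesis; use $\pnorm{1-\abs{v_\ep}^2}{2}\le C\ep\sqrt{\ale}$ directly from the free energy; and bound $\pnorm{1-\abs{v_\ep}^2}{q}$ and $\pnormspace{1-\abs{v_\ep}^2}{p}{\partial\Omega}$ from Part 1 via the elementary identity
\begin{equation*}
\abs{1-\abs{v_\ep}^2} = \abs{1-\abs{v_\ep}}\abs{1+\abs{v_\ep}} \le 3\abs{1-\abs{v_\ep}} + 2\abs{1-\abs{v_\ep}}^2,
\end{equation*}
which circumvents the lack of an $L^\infty$ bound on $\abs{v_\ep}$. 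Inserting these estimates yields $\pnorm{\diverge B_\ep}{p}$ and $\pnormspace{B_\ep\cdot\nu}{p}{\partial\Omega}$ of order $\sqrt{\ale}(1+t\ale\,\ep^{1/q})$; the factor $\ep^{1/q}$ arises by first upgrading the boundary $L^p$ norm to $L^q$ via H\"older on $\partial\Omega$ (which has finite measure) and then invoking Lemma \ref{bndry_conv}. Proposition \ref{boundary_poincare} then delivers \eqref{b_b_05}, and the Sobolev embedding $W^{1/p-\delta,p}\hookrightarrow L^{2p/(1+\delta p)}$ built into it yields \eqref{b_b_03} on the full range $q\in(2,4)$ by a suitable choice of $p$ and $\delta$.

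For Part 4 I would split
\begin{equation*}
\int_\Omega \abs{v_\ep}^2\abs{B_\ep}^2 = \int_\Omega \abs{B_\ep}^2 + \int_\Omega (\abs{v_\ep}^2-1)\abs{B_\ep}^2.
\end{equation*}
The first integral is bounded by $\abs{\Omega}^{1-2/q}\pnorm{B_\ep}{q}^2$, and choosing $q$ just below $4$ so that $\ep^{2/q}\le\sqrt\ep$ converts the Part 3 estimate into $C\ale(1+t^2\ale^2\sqrt\ep)$. The second integral is handled by H\"older with $1/r+2/s=1$ for some $s<4$: the $L^s$ control on $B_\ep$ comes from Part 3 and the $L^r$ control on $1-\abs{v_\ep}^2$ from Part 1 together with the elementary splitting above; since $r>2$ the factor $\ep^{2/r}$ is strictly smaller than $\sqrt\ep$ and this correction is absorbed into the leading term. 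The main obstacle is the careful balancing of the H\"older exponents in Parts 2 and 3 so that the time-integrated contributions from $\dt v_\ep$ and from $\abs{v_\ep}^2-1$ assemble into the single estimate $C\sqrt\ale(1+t\ale\,\ep^{1/q})$; because the mixed flow does not satisfy a maximum principle, every occurrence of $\abs{v_\ep}^2-1$ must be treated through the quadratic splitting, and verifying that the quadratic correction is genuinely lower order in $\ep$ at each step is the routine but delicate bookkeeping that drives the length of the argument.
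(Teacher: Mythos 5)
Your plan for Parts 1--3 is precisely the paper's argument: Part 1 is a direct substitution of \eqref{b_b_01} into Lemma \ref{bndry_conv}; Parts 2--3 invoke Proposition \ref{boundary_poincare}, bound the curl from the free energy, and estimate the divergence and normal trace via \eqref{d_c_02}--\eqref{d_c_03}, Cauchy--Schwarz in time on $\int_0^t \pnorm{\sqrt{b}\,\dt v_\ep}{2}\,ds$, and the quadratic splitting of $1-\abs{v_\ep}^2$ (which the paper also deploys, cf.\ the proof of Lemma \ref{energy_comparison}). For Part 4 your route differs slightly from the paper's. You split $\int \abs{v_\ep}^2\abs{B_\ep}^2 = \int \abs{B_\ep}^2 + \int (\abs{v_\ep}^2-1)\abs{B_\ep}^2$ and estimate the two pieces separately; the paper instead applies H\"older once, $\pnorm{\abs{v_\ep}\abs{B_\ep}}{2} \le \pnorm{\abs{v_\ep}}{r}\pnorm{B_\ep}{q}$ with $\frac{1}{2}=\frac{1}{r}+\frac{1}{q}$, using that $\pnorm{\abs{v_\ep}}{r} \le \abs{\Omega}^{1/r} + \pnorm{1-\abs{v_\ep}}{r}$ is bounded, then squares and applies Cauchy. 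The paper's version is shorter and avoids the exponent bookkeeping for the cross term.

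One minor correction to your Part 4 justification: the assertion \emph{``since $r>2$ the factor $\ep^{2/r}$ is strictly smaller than $\sqrt\ep$''} is not right as stated --- $\ep^{2/r} < \ep^{1/2}$ requires $r<4$, not $r>2$. The conclusion is nevertheless correct for your exponent choice: with $\frac{1}{r} + \frac{2}{s} = 1$ and $s$ just below $4$, one gets $r$ just above $2$, hence $r<4$; alternatively one can avoid the claim altogether by noting that the correction term carries a factor $\sqrt\ale\,\ep^{2/r+2/s-1/2}$ with $2/r + 2/s = 1 + 1/r > 1$, which tends to $0$. Either repair closes the gap, which is one of phrasing rather than substance.
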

\begin{proof}
To begin, we  note that Lemma \ref{bndry_conv} and \eqref{b_b_01} imply that
\begin{equation}\label{b_b_2}
  \pnormspace{1-\abs{v_\ep}}{r}{\Omega} \le C \ep^{2/r} \sqrt{\ale}
\text{ and }
 \pnormspace{1-\abs{v_\ep}}{r}{\partial \Omega} \le C \ep^{1/r} \sqrt{\ale}
\end{equation}
for all $r>2$, which yields \eqref{b_b_02}.  

To prove the second item, we let $1 < p < 2$ and $\delta >0$ be sufficiently small.  According to Proposition \ref{boundary_poincare} we may bound
\begin{equation}\label{b_b_1}
 \norm{B_\ep}_{W^{1/p - \delta,p}} \le C \left( \pnorm{\diverge{B_\ep}}{p} + \pnorm{\curl{B_\ep}}{p} + \pnormspace{B_\ep  \cdot \nu}{p}{\partial \Omega} \right).
\end{equation}
We will estimate each term on the right hand side of this inequality using Lemma \ref{diverge_control} with the bounds \eqref{b_b_01} and \eqref{b_b_2}.   The bounds \eqref{d_c_03} and \eqref{b_b_2} together with H\"{o}lder's inequality provide the estimate
\begin{equation}\label{b_b_3}
\sigma \pnormspace{B_\ep(t)\cdot \nu}{p}{\partial \Omega} \le  C t \ale^{3/2} \ep^{1/q} 
\end{equation} 
for $q = 2p/(1+\delta p)$.  Similarly, \eqref{d_c_02} gives
\begin{multline}
 \sigma \pnorm{\diverge{B_\ep(t)}}{p} \le  C \left( \int_0^t \pnorm{\sqrt{b} \dt v_\ep}{2}^2 \right)^{1/2} +  C t  \ale^{3/2} \ep^{2/q} +  C  \ale^{3/2} \ep^{2/q} \\ \le 
 C \ale^{3/2}  \ep^{2/q} (1 +  t   ).
\end{multline}
Finally, the energy bound \eqref{b_b_01} and H\"{o}lder imply that for $p<2$, 
\begin{equation}\label{b_b_4}
 \pnorm{\curl{B_\ep(t)}}{p} \le C \pnorm{\curl{B_\ep(t)}}{2} \le C \sqrt{\ale}.
\end{equation}
We may then combine estimates \eqref{b_b_3}--\eqref{b_b_4} with \eqref{b_b_1} to deduce \eqref{b_b_05}

For the third item we now fix $q \in (2,4)$ and choose $1< p < 2$ and $\delta>0$  so that $q = 2p/(1+\delta p)$ and $1/p - \delta >0$.   Then \eqref{b_b_03} follows from \eqref{b_b_05} and the embedding $W^{1/p-\delta,p}(\Omega) \hookrightarrow L^q(\Omega)$.

Now for the fourth item we utilize \eqref{b_b_2} with $r = 2q/(q-2) >2$ to bound
\begin{equation}
 \pnorm{ \abs{v_\ep(t)} \abs{B_\ep(t)} }{2} \le \pnorm{ \abs{v_\ep(t)}  }{r} \pnorm{ B_\ep(t) }{q}
\le C\sqrt{\ale}  (1 +  t  \ale \ep^{1/q} ).
\end{equation}
Squaring this inequality and applying Cauchy's inequality on the right side then gives
\begin{equation}
 \pnorm{ \abs{v_\ep(t)} \abs{B_\ep(t)} }{2}^2  
\le C\ale (1 +  t^2  \ale^2 \ep^{2/q} ) \le C\ale (1 +  t^2  \ale^2 \sqrt{\ep} ),
\end{equation}
where in the last inequality we have used the fact that $2 < q < 4$ implies  $2/q > 1/2$.  This is \eqref{b_b_04}.
\end{proof}

We now parlay these bounds into convergence results.  We begin by recalling a Lemma on compactness in space-time, due to Simon \cite{simon}.

\begin{lem}\label{compactness}
 Suppose $\mathbb{X}, \mathbb{Y}, \mathbb{Z}$ are Banach spaces such that $\mathbb{X} \csubset \mathbb{Y} \hookrightarrow \mathbb{Z}$ and 
\begin{equation*}
 \norm{x}_{\mathbb{Y}} \le C \norm{x}^{1-\theta}_{\mathbb{X}} \norm{x}^{\theta}_{\mathbb{Z}}
\end{equation*}
for some $\theta \in (0,1)$.  Let $1 < p_1, p_2 \le \infty$.  Then each set bounded both in $L^{p_1}([0,T];\mathbb{X})$ and in $W^{1,p_2}([0,T];\mathbb{Z})$ is pre-compact in $L^p([0,T];\mathbb{Y})$ for all $p \le p_1 / (1-\theta)$.  
\end{lem}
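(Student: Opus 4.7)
The plan is to deduce the result from a Kolmogorov--Riesz--Fr\'echet compactness criterion in the Bochner setting, following the approach of Simon. Given a sequence $\{u_n\}$ bounded in $L^{p_1}([0,T];\mathbb{X}) \cap W^{1,p_2}([0,T];\mathbb{Z})$, the goal is to prove the existence of a subsequence converging strongly in $L^p([0,T];\mathbb{Y})$. The argument splits into a time-translation equicontinuity estimate, a fiberwise compactness statement, and an application of the vector-valued Kolmogorov compactness theorem.

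For the time-translation estimate, the bound on $\dot u_n$ in $L^{p_2}([0,T];\mathbb{Z})$ combined with the fundamental theorem of calculus and H\"older's inequality yields
\begin{equation*}
 \norm{u_n(t+h) - u_n(t)}_\mathbb{Z} \le \int_t^{t+h} \norm{\dot u_n(s)}_\mathbb{Z} \, ds \le C\, h^{1-1/p_2}
\end{equation*}
uniformly in $n$ and $t$. Feeding this into the interpolation inequality
\begin{equation*}
 \norm{u_n(t+h) - u_n(t)}_\mathbb{Y} \le C \norm{u_n(t+h) - u_n(t)}_\mathbb{X}^{1-\theta} \norm{u_n(t+h) - u_n(t)}_\mathbb{Z}^\theta,
\end{equation*}
taking $L^p$-norm in $t$, and applying H\"older with the $\mathbb{Z}$-factor absorbed as a uniform constant of size $h^{\theta(1-1/p_2)}$ while the $\mathbb{X}$-factor is controlled by the $L^{p_1}$-bound on $\{u_n\}$, one obtains
\begin{equation*}
 \norm{u_n(\cdot+h) - u_n(\cdot)}_{L^p([0,T-h];\mathbb{Y})} \le C h^{\theta(1-1/p_2)} \to 0
\end{equation*}
as $h\to 0$, uniformly in $n$, provided the exponents balance as $(1-\theta)p \le p_1$, that is, $p\le p_1/(1-\theta)$.

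For the fiberwise compactness, the $L^{p_1}([0,T];\mathbb{X})$ bound ensures that $\{u_n(t)\}$ is bounded in $\mathbb{X}$ for a.e. $t$, and the compact embedding $\mathbb{X}\csubset \mathbb{Y}$ then makes the sets $\{u_n(t)\}$ relatively compact in $\mathbb{Y}$ at such times. The conjunction of the uniform time-translation equicontinuity from the previous step with this pointwise precompactness fits precisely into a Banach-valued Kolmogorov--Riesz--Fr\'echet criterion, which produces the desired precompactness of $\{u_n\}$ in $L^p([0,T];\mathbb{Y})$.

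The principal difficulty is extracting the sharp threshold $p\le p_1/(1-\theta)$ rather than the weaker classical Aubin--Lions bound $p\le p_1$: this requires the H\"older decomposition on the interpolated quantity $\norm{\cdot}_\mathbb{X}^{1-\theta}\norm{\cdot}_\mathbb{Z}^\theta$ to be calibrated so that the $\mathbb{Z}$-difference, being uniformly small in $t$, is paired against $L^\infty$ in time, leaving only the $\mathbb{X}$-factor to consume integrability. Once this balancing is in place, the remainder of the argument is routine.
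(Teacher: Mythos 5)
The paper does not prove this lemma; it simply cites Simon's 1987 paper, so the comparison here is against the standard proof (Simon, Theorem 5 and its supporting Theorem 1).

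Your time-translation equicontinuity estimate is correct and is indeed the heart of the sharp result. The interpolation step and the calibration $(1-\theta)p\le p_1$ (absorbing the $\mathbb{Z}$-factor uniformly in $t$ while feeding the $\mathbb{X}$-factor into the $L^{p_1}$ bound) are exactly what gives the improved exponent $p\le p_1/(1-\theta)$ over the naive Aubin--Lions threshold, and this part matches Simon's argument in spirit and substance.

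The flaw is in the ``fiberwise compactness'' step. The claim that ``the $L^{p_1}([0,T];\mathbb{X})$ bound ensures that $\{u_n(t)\}$ is bounded in $\mathbb{X}$ for a.e.\ $t$'' is false: a uniform bound on $\int_0^T\norm{u_n(t)}_{\mathbb{X}}^{p_1}\,dt$ does not prevent the graphs of $t\mapsto \norm{u_n(t)}_{\mathbb{X}}$ from having tall spikes on sets whose location drifts with $n$. A moving-bump construction $u_n(t)=n^{1/p_1}\mathbf{1}_{[s_n,s_n+1/n]}(t)\,x_n$ with $\sum 1/n=\infty$ makes $\limsup_n\norm{u_n(t)}_{\mathbb{X}}=\infty$ for \emph{every} $t$ while keeping $\norm{u_n}_{L^{p_1}(\mathbb{X})}$ bounded, and the extra $W^{1,p_2}(\mathbb{Z})$ hypothesis does not repair this since $x_n$ can be chosen small in $\mathbb{Z}$ but large in $\mathbb{X}$. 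Consequently the vector-valued Kolmogorov--Riesz--Fr\'echet criterion cannot be invoked in the form you describe, because its pointwise-precompactness-almost-everywhere hypothesis is not available. The correct replacement, which is what Simon uses, is a compactness criterion phrased in terms of \emph{averages} rather than pointwise values: $F$ is relatively compact in $L^p(0,T;B)$ iff the translation estimate holds uniformly and, for every $0<t_1<t_2<T$, the set $\bigl\{\int_{t_1}^{t_2} u_n(t)\,dt\bigr\}_n$ is relatively compact in $B$. These averages \emph{are} bounded in $\mathbb{X}$ by H\"older applied to the $L^{p_1}(\mathbb{X})$ bound, hence precompact in $\mathbb{Y}$ by the compact embedding. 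Replacing your fiberwise step with this averages-based criterion closes the gap and recovers Simon's proof.
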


With this lemma in hand we can deduce a pair of convergence results.

\begin{prop}\label{B_limits}
Let $(v_\ep,B_\ep)$ solve \eqref{s_r_00}--\eqref{s_r_02}.  Suppose that 
\begin{equation}\label{b_l_01}
\sup_{0 \le s \le T_*} F_\ep(v_\ep,B_\ep)(s) + \int_0^{T_*} \int_\Omega \alpha b \abs{\dt v_\ep}^2 + \sigma \abs{\dt B_\ep}^2 \le K \ale
\end{equation}
for some fixed $T_*>0$.  Fix $2 < r < 4$.  Then  up to the extraction of a subsequence
\begin{equation}\label{b_l_02}
 \frac{B_\ep}{\sqrt{\ale}} \rightarrow B_* \text{ in } L^2((0,T_*);L^r(\Omega)).
\end{equation}
Moreover, 
\begin{equation}\label{b_l_03}
 \frac{\curl{B_\ep}}{\sqrt{\ale}} \rightarrow \curl{B_*} \text{ in } L^2(\Omega \times (0,T_*)),
\end{equation}
and
\begin{equation}\label{b_l_04}
 \int_0^{T_*} \int_\Omega \frac{ \curl{B_\ep}\,\dt B_\ep^\bot}{\ale} \rightarrow \int_0^{T_*} \int_\Omega \curl{B_*} \,  \dt B_*^\bot.
\end{equation}

\end{prop}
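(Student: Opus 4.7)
The plan is to establish space-time compactness of $B_\ep/\sqrt{\ale}$ via Lemma~\ref{compactness} and then promote it to strong $L^2$ convergence of $\curl B_\ep/\sqrt{\ale}$ by combining an energy estimate for the parabolic equation satisfied by $h_\ep' := \curl B_\ep$ with a second application of Lemma~\ref{compactness}.

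For \eqref{b_l_02}: The hypothesis \eqref{b_l_01} together with Proposition~\ref{B_bound} gives that $\{B_\ep/\sqrt{\ale}\}$ is uniformly bounded in $L^\infty(0,T_*;W^{1/p-\delta,p}(\Omega))$ as $\ep\to 0$, since the factor $1+t\ale\ep^{1/q}$ appearing in \eqref{b_b_05} stays bounded as $\ale\ep^{1/q}\to 0$; at the same time $\{\dt B_\ep/\sqrt{\ale}\}$ is bounded in $L^2(\Omega\times(0,T_*))$ by assumption. Given $r\in(2,4)$, choose $p\in(1,2)$ and $\delta>0$ small enough that $q:=2p/(1+\delta p)>r$, and apply Lemma~\ref{compactness} with $\mathbb{X}=W^{1/p-\delta,p}$, $\mathbb{Y}=L^r$, $\mathbb{Z}=L^2$: Sobolev compactness gives $\mathbb{X}\csubset L^r$, while the interpolation $\|\cdot\|_{L^r}\le C\|\cdot\|_{L^q}^{1-\theta}\|\cdot\|_{L^2}^{\theta}$ (with $1/r=(1-\theta)/q+\theta/2$) together with $\|\cdot\|_{L^q}\le C\|\cdot\|_{\mathbb{X}}$ supplies the interpolation hypothesis. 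Extract a subsequence converging strongly in $L^2(0,T_*;L^r)$ to a limit $B_*$.

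For \eqref{b_l_03}: Set $h_\ep'=\curl B_\ep$ and rewrite \eqref{s_r_01} as $\nab^\bot h_\ep'=\sigma\dt B_\ep-S_\ep$ with $S_\ep:=b(iv_\ep,\nab_{B_\ep}v_\ep)+(\abs{v_\ep}^2-1)bZ_\ep$. The crux is to prove $\int_0^{T_*}\|S_\ep\|_{L^2}^2\le C\ale$. The piece $(\abs{v_\ep}^2-1)bZ_\ep$ is harmless because $\abs{Z_\ep}\le C\ale$ and the Ginzburg--Landau potential yields $\pnorm{\abs{v_\ep}^2-1}{2}^2\le C\ep^2\ale$, giving an $o(\ale)$ contribution. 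For $b(iv_\ep,\nab_{B_\ep}v_\ep)$ I decompose $\nab_{B_\ep}v_\ep=\nab v_\ep-iB_\ep v_\ep$ so that $b(iv_\ep,\nab_{B_\ep}v_\ep)=b(iv_\ep,\nab v_\ep)+b\abs{v_\ep}^2 B_\ep$; the second summand is controlled by Proposition~\ref{B_bound}(4), while for the first I split $\abs{v_\ep}^2=1+(\abs{v_\ep}^2-1)$ and use Young's inequality $\abs{\abs{v_\ep}^2-1}\,\abs{\nab v_\ep}^2\le \frac{(\abs{v_\ep}^2-1)^2}{2\ep^2}+\frac{\ep^2\abs{\nab v_\ep}^4}{2}$, absorbing the first term into $F_\ep$ and handling the second via Gagliardo--Nirenberg combined with the bound on $\int b\abs{\nab_{B_\ep} v_\ep}^2$. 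Granting $\|S_\ep\|_{L^2(Q)}\le C\sqrt{\ale}$ with $Q:=\Omega\times(0,T_*)$, the identity $\nab h_\ep'=-(\nab^\bot h_\ep')^\bot$ and Poincaré (using $h_\ep'|_{\partial\Omega}=0$) give $h_\ep'/\sqrt{\ale}$ bounded in $L^2(0,T_*;H^1_0)$. Since $\dt h_\ep'/\sqrt{\ale}=\curl(\dt B_\ep/\sqrt{\ale})\in L^2(0,T_*;H^{-1})$, Lemma~\ref{compactness} with $(\mathbb{X},\mathbb{Y},\mathbb{Z})=(H^1_0,L^2,H^{-1})$ yields strong convergence in $L^2(Q)$; distributional uniqueness identifies the limit as $\curl B_*$.

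Finally, \eqref{b_l_04} follows from strong-weak pairing: up to further extraction $\dt B_\ep^\bot/\sqrt{\ale}$ converges weakly in $L^2(Q)$ to $\dt B_*^\bot$ (by distributional uniqueness, thanks to \eqref{b_l_02}), and the strong $L^2(Q)$ convergence of $\curl B_\ep/\sqrt{\ale}$ from \eqref{b_l_03} makes each component of the product integral converge. The principal obstacle throughout will be the $L^2$ bound on the quadratic nonlinearity $b(iv_\ep,\nab_{B_\ep}v_\ep)$ in the absence of any uniform $L^\infty$ estimate on $\abs{v_\ep}$ (the mixed flow does not admit a maximum principle); the rescue is the quadratic control from Proposition~\ref{B_bound}(4) paired with the Ginzburg--Landau-potential smallness of $\abs{v_\ep}^2-1$.
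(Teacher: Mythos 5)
Your handling of \eqref{b_l_02} and \eqref{b_l_04} is correct and matches the paper's approach, but the argument for \eqref{b_l_03} has a genuine gap.

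You want to show $S_\ep := b(iv_\ep,\nab_{B_\ep}v_\ep)+(\abs{v_\ep}^2-1)bZ_\ep$ satisfies $\int_0^{T_*}\|S_\ep\|_{L^2}^2\le C\ale$, so that $\zeta_\ep := \curl B_\ep/\sqrt{\ale}$ is bounded in $L^2(0,T_*;H^1_0)$. This requires, in effect, $\int \abs{v_\ep}^2\abs{\nab_{B_\ep}v_\ep}^2\lesssim \ale$, which is not available: there is no $L^\infty$ control on $\abs{v_\ep}$ (no maximum principle for the mixed flow), and the substitutes you offer do not close. Two specific problems. First, for $b\abs{v_\ep}^2 B_\ep$ you cite Proposition \ref{B_bound}(4), but that controls $\int \abs{v_\ep}^2\abs{B_\ep}^2$, not $\int \abs{v_\ep}^4\abs{B_\ep}^2 = \|b\abs{v_\ep}^2 B_\ep\|_{L^2}^2/\|b\|^2$; the missing factor of $\abs{v_\ep}^2$ is exactly the uncontrolled quantity. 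Second, your Young-plus-Gagliardo--Nirenberg bound on $\ep^2\int\abs{\nab v_\ep}^4$ needs an $H^2$-type bound on $v_\ep$, and the best one can extract from the equation is $\|\Delta_{B_\ep}v_\ep\|_{L^2}\lesssim \sqrt{\ale}/\ep$, so Gagliardo--Nirenberg gives $\ep^2\|\nab_{B_\ep}v_\ep\|_{L^4}^4\lesssim \ale^2$, not $\ale$. The upshot is that the claimed $L^2(0,T_*;H^1_0)$ bound on $\zeta_\ep$ is out of reach.

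The paper sidesteps this entirely. It only establishes $\int_0^{T_*}\|\nab\zeta_\ep\|_{L^p}^2\le C$ for $1<p<2$, which follows immediately from H\"{o}lder (pairing $\nab_{B_\ep}v_\ep\in L^2$ with $v_\ep\in L^q$, $1/p=1/q+1/2$, and using the bound on $\|1-\abs{v_\ep}\|_{L^q}$) — no attempt at $p=2$. Combined with $\zeta_\ep$ bounded in $L^2(0,T_*;L^p)$ and $\dt\zeta_\ep$ bounded in $L^2(0,T_*;H^{-1})$, one applies Lemma \ref{compactness} with $\mathbb{X}=W^{1,p}$, $\mathbb{Y}=L^2$, $\mathbb{Z}=H^{-1}$, using the interpolation $\|\cdot\|_{W^{s,q}}\le C\|\cdot\|_{W^{1,p}}^{1-\theta}\|\cdot\|_{H^{-1}}^{\theta}$ from Triebel and the Sobolev embedding $W^{s,q}\hookrightarrow L^2$. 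That is, the paper compensates for the weaker spatial regularity ($W^{1,p}$ with $p<2$ rather than $H^1$) by interpolating against the $H^{-1}$ time-derivative bound, which your proposal does not exploit. To repair your argument you would need to replace the $L^2(H^1_0)$ claim by this $L^2(W^{1,p})$/$H^1(H^{-1})$ interpolation route.
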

\begin{proof}
We will derive these convergence results by applying Lemma \ref{compactness}.  As such we must first verify its hypotheses.  We begin with the convergence of $\bar{B}_\ep := B_\ep / \sqrt{\ale}$.  According to \eqref{b_b_05} of Proposition \ref{B_bound}
\begin{equation}
 \sup_{0 \le t \le T_*} \norm{\bar{B}_\ep(t)}_{W^{1/p-\delta,p}} \le C.
\end{equation}
On the other hand, the energy bound \eqref{b_l_01} implies that 
\begin{equation}\label{258}
 \int_0^{T_*} \norm{\dt \bar{B}_\ep(t)}_{L^2}^2 dt \le C.
\end{equation}
Hence the collection $\{ \bar{B}_\ep \}_\ep$ is uniformly bounded in 
\begin{equation}
L^\infty( (0,T_*) ; W^{1/p-\delta,p}(\Omega)) \cap H^1( (0,T_*) ; L^2(\Omega) ). 
\end{equation}

According to the interpolation result of Theorem 4.3.1/1 of \cite{triebel}, there exists a constant $C>0$ so that
\begin{equation}
 \norm{\bar{B}_\ep}_{W^{s,q}} \le C \norm{\bar{B}_\ep}_{W^{1/p-\delta,p}}^{1-\theta} \norm{\bar{B}_\ep}_{L^2}^{\theta}
\end{equation}
for any $\theta \in (0,1)$, where
\begin{equation}
 s = (1-\theta)\left(\frac{1}{p} -\delta\right) \text{ and }
 \frac{1}{q} = \frac{1-\theta}{p} + \frac{\theta}{2}.
\end{equation}
We chain this inequality together with the embedding 
\begin{equation}\label{b_l_10}
 W^{s,q}(\Omega) \hookrightarrow L^r(\Omega) \text{ for } \frac{1}{r} = \frac{1}{q} - \frac{s}{2} \text{ with } s>0
\end{equation}
to get
\begin{equation}
 \norm{\bar{B}_\ep}_{L^r} \le C \norm{\bar{B}_\ep }_{W^{1/p-\delta,p}}^{1-\theta} \norm{\bar{B}_\ep}_{L^2}^{\theta}
\end{equation}
for 
\begin{equation}\label{b_l_1}
 \frac{1}{r} = \frac{1-\theta}{p} + \frac{\theta}{2}  - \frac{1-\theta}{2} \left( \frac{1}{p} - \delta\right).
\end{equation}
By choosing $1< p < 2$, $\delta>0$ small enough, and $\theta \in (0,1)$ we can achieve any $r \in (2,4)$ in \eqref{b_l_1}.  So, for any $r \in(2,4)$ we can apply Lemma \ref{compactness} with $\mathbb{X} = W^{1/p-\delta,p}(\Omega)$, $\mathbb{Y} = L^r(\Omega)$, and $\mathbb{Z} = L^2(\Omega)$ to get \eqref{b_l_02}.

We now turn to the convergence of $\zeta_\ep := \curl{B_\ep}/\sqrt{\ale}$.  Because of \eqref{s_r_01} we may write
\begin{equation}
  \nab^\bot \zeta_\ep = \sigma \dt \bar{B}_\ep - b \frac{(iv_\ep,\nab_{B_\ep} v_\ep) }{\sqrt{\ale}} - (\abs{v_\ep}^2-1)\frac{b Z_\ep}{\sqrt{\ale}}.
\end{equation}
This equation, the energy bound \eqref{b_l_01}, H\"{o}lder's inequality, and \eqref{b_b_02} then allow us to bound, for $1< p<2$
\begin{equation}\label{b_l_2}
 \int_0^{T_*} \pnorm{\nab \zeta_\ep}{p}^2 \le C \int_0^{T_*} \left( \pnorm{\dt \bar{B}_\ep}{2}^2  + \frac{\pnorm{\nab_{B_\ep} v_\ep}{2}^2}{\ale} + 1\right) \le C(1+T_*).
\end{equation}
Similarly, the bound \eqref{b_l_01} and H\"{o}lder give, for $1<p<2$,
\begin{equation}\label{b_l_4}
 \int_0^{T_*} \pnorm{\zeta_\ep}{p}^2 \le C \int_0^{T_*} \frac{F_\ep(v_\ep,B_\ep)}{\ale}  \le C T_*.
\end{equation}
We will estimate $\dt \zeta_\ep$ spatially in $H^{-1}(\Omega) = (H^1_0(\Omega))^*$.  Let $\psi \in L^2((0,T_*); H_0^1(\Omega))$.  Then since $\zeta_\ep = \curl{\bar{B}_\ep}$ we may estimate
\begin{equation}
\int_0^{T_*} \int_\Omega \psi \dt \zeta_\ep = \int_0^{T_*} \int_\Omega -\nab^\bot \psi \cdot  \dt \bar{B}_\ep \le \left(\int_0^{T_*} \pnorm{\dt \bar{B}_\ep}{2}^2 \right)^{1/2} \left(\int_0^{T_*} \norm{\psi}_{H^1}^2 \right)^{1/2},
\end{equation} which implies, with \eqref{258},  upon taking the supremum over all such $\psi$, that 
\begin{equation}\label{b_l_3}
 \int_0^{T_*} \norm{ \dt \zeta_\ep }_{H^{-1}}^2 \le \int_0^{T_*} \pnorm{\dt \bar{B}_\ep}{2}^2 \le C.
\end{equation}   
Hence, from \eqref{b_l_2}, \eqref{b_l_4}, and \eqref{b_l_3} we see that for any  $1 < p < 2$, the collection $\{ \zeta_\ep \}$ is uniformly bounded in 
\begin{equation}
 L^2((0,T_*) ; W^{1,p}(\Omega)) \cap H^1( (0,T_*); H^{-1}(\Omega)).
\end{equation}
To apply Lemma \ref{compactness} we fix $1 < p < 2$ and apply Theorems 4.3.1/1 and 4.8.2 of \cite{triebel} to see that
\begin{equation}
 \norm{\zeta_\ep}_{W^{s,q}}  \le C \norm{\zeta_\ep }_{W^{1,p}}^{1-\theta} \norm{\zeta_\ep}_{H^{-1}}^{\theta}
\end{equation}
for $\theta \in (0,1)$, 
\begin{equation}
 s = 1 - 2 \theta, \text{ and } \frac{1}{q} = \frac{1-\theta}{p} + \frac{\theta}{2}.
\end{equation}
If we choose $\theta = (2p-2)/(3p-2) \in (0,1/2)$, then $s>0$ and we may use the embedding  \eqref{b_l_10} with $r=2$ to bound 
\begin{equation}
  \norm{\zeta_\ep}_{L^2} \le C \norm{\zeta_\ep }_{W^{1,p}}^{1-\theta} \norm{\zeta_\ep}_{H^{-1}}^{\theta}.
\end{equation}
Hence Lemma \ref{compactness} is applicable with $\mathbb{X} = W^{1,p}(\Omega)$, $\mathbb{Y} = L^2(\Omega)$, and $\mathbb{Z} = H^{-1}(\Omega)$, and it provides the convergence  \eqref{b_l_03}.
The convergence of \eqref{b_l_04} is a consequence of \eqref{b_l_03} and the weak-$L^2(\Omega \times (0,T_*))$ compactness of $\dt B_\ep/\sqrt{\ale}$ that follows from the energy bound \eqref{b_l_01}.

\end{proof}

\section{Energy analysis}\label{sec4}
In this section we examine the evolution of the energy and show that it  cannot increase too quickly in time.  We first have the following formula for the  evolution of the energy density \eqref{gdef}.

\begin{lem}\label{en_evolve}
 For any pair $(u,A)$ (not necessarily solutions) it holds that
\begin{multline}
 \dt g_\ep(u,A) = b \diverge(\dt u,\nab_A u) + \curl(\curl{A}(\dt A )) \\
- (\dt u,b \Delta_A u + \frac{b^2 u}{\ep^2}(1-\abs{u}^2)) - (\dt A  )\cdot (\nab^\bot \curl{A} + b (iu,\nab_A u)).
\end{multline}
\end{lem}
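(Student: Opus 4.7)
I would prove the lemma by differentiating each of the three pieces of $g_\ep(u,A)$ separately and then rearranging.

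Write $g_\ep = g_1 + g_2 + g_3$ where $g_1 = \hal b|\nab_A u|^2$, $g_2 = \frac{b^2}{4\ep^2}(1-|u|^2)^2$, $g_3 = \hal|\curl A|^2$. Since $b$ is time-independent, for $g_2$ the chain rule immediately gives
\begin{equation*}
\dt g_2 = -\frac{b^2}{\ep^2}(1-|u|^2)(u,\dt u) = -\left(\dt u,\frac{b^2 u}{\ep^2}(1-|u|^2)\right),
\end{equation*}
contributing the $\tfrac{b^2 u}{\ep^2}(1-|u|^2)$ piece of the third term in the claimed identity.

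For $g_1$, the key identity is $\dt(\nab_A u) = \nab_A \dt u - i(\dt A) u$, which follows directly from $\nab_A = \nab - iA$. Then
\begin{equation*}
\dt g_1 = b(\nab_A u,\nab_A \dt u) - b(\nab_A u, iu(\dt A)) = b(\nab_A u,\nab_A \dt u) - b(\dt A)\cdot(iu,\nab_A u),
\end{equation*}
using the symmetry $(a,b)=(b,a)$ and the antisymmetry $(a,ib)=-(ia,b)$ of the real scalar product. To handle $b(\nab_A u,\nab_A \dt u)$ I would use the covariant integration-by-parts identity
\begin{equation*}
(\nab_A v,\nab_A w) = \diverge(v,\nab_A w) - (v,\Delta_A w),
\end{equation*}
which can be checked by direct expansion (the pure $\nab v\cdot\nab w$ part gives the usual IBP, and the $A$-cross-terms cancel because of the antisymmetry $(a,ib)=-(ia,b)$). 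Applying this with $v=\dt u$, $w=u$ yields
\begin{equation*}
\dt g_1 = b\diverge(\dt u,\nab_A u) - (\dt u,b\Delta_A u) - b(\dt A)\cdot(iu,\nab_A u).
\end{equation*}

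For $g_3$, since $\curl A$ is a scalar in 2D I would write $\dt g_3 = \curl A \cdot \curl\dt A$ and use the 2D product rule $\curl(hX)=h\,\curl X + \nab^\bot h\cdot X$ (valid for scalar $h$ and vector $X$) with $h=\curl A$, $X=\dt A$ to obtain
\begin{equation*}
\curl A\cdot \curl \dt A = \curl(\curl A\,(\dt A)) - \nab^\bot \curl A \cdot \dt A.
\end{equation*}
Summing $\dt g_1 + \dt g_2 + \dt g_3$ and regrouping the two $\dt u$ terms and the two $\dt A$ terms produces exactly the claimed formula. No serious obstacle is expected — the main care is bookkeeping the signs in $(a,ib)=-(ia,b)$ and verifying the covariant IBP identity, both of which are routine.
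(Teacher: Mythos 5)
Your proof is correct and takes essentially the same approach the paper indicates: a direct term-by-term differentiation of the three pieces of $g_\ep$, with the time-commutator identity $\dt(\nab_A u)=\nab_A\dt u - iu\,\dt A$ doing the essential work in the covariant-gradient piece. The paper also lists the spatial commutator $[\nab_A^1,\nab_A^2]u=iu\curl A$ among the tools for the ``direct calculation,'' but, as your argument shows, it is not actually needed for this identity; otherwise the two proofs coincide in content, with yours supplying the intermediate covariant integration-by-parts identity and the $2$D $\curl$ product rule that the paper leaves implicit.
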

\begin{proof}
The result  follows from a direct calculation and the commutator identities
\begin{equation}
 \nab_A  \dt u - \dt \nab_A u = i u \dt A 
\end{equation}
and 
\begin{equation}
 (\partial_2 - iA_2)( \partial_1 - i A_1) u - ( \partial_1 - i A_1) (\partial_2 - iA_2) u = i u \curl{A}.
\end{equation}

\end{proof}

When we apply this to our solutions $(v_\ep,B_\ep)$ in the $\Phi_\ep=\ale \phib$ gauge we are led (as seen in Section \ref{glsimple}) to consider a modification of the free energy density given by
\begin{equation}\label{g_tilde_def}
 \tilde{g}_\ep(u,A) := g_\ep(u,A) + \frac{(1-\abs{u}^2)}{2} b f_\ep,
\end{equation}
where $f_\ep$ is as defined in Lemma \ref{specific_reformulation}. The evolution of the integral of this energy density involves a surface energy term, so we are led to define 
\begin{equation}
 \tilde{F}_\ep(u,A) := \int_\Omega \tilde{g}_\ep(u,A) + \int_{\partial \Omega} \frac{(\abs{u}^2-1)}{4} \nab b \cdot \nu.
\end{equation}

\begin{lem}\label{mod_en_evolve}
Let $(v_\ep,B_\ep)$ be solutions to \eqref{s_r_00}--\eqref{s_r_02} in the $\Phi_\ep=\ale \phib$ gauge.  Then
\begin{multline}\label{m_e_e_01}
 \dt \tilde{g}_\ep(v_\ep,B_\ep) = \diverge\left( b(\dt v_\ep,\nab_{B_\ep} v_\ep) \right)+ \curl(h_\ep' \dt B_\ep ) - \alpha  b \abs{\dt v_\ep}^2 - \sigma \abs{\dt B_\ep}^2 \\
+ b Z_\ep \cdot V(v_\ep,B_\ep).
\end{multline}
In particular, this implies that
\begin{equation}\label{m_e_e_02}
 \dt \tilde{F}_\ep(v_\ep,B_\ep) + \int_\Omega \alpha  b \abs{\dt v_\ep}^2 +\sigma \abs{\dt B_\ep}^2 =  \int_\Omega b  Z_\ep \cdot V(v_\ep,B_\ep).
\end{equation}
\end{lem}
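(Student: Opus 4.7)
The plan is to apply Lemma \ref{en_evolve} to the pair $(v_\ep, B_\ep)$, with $h_\ep' = \curl B_\ep$, and then substitute from the PDE system \eqref{s_r_00}--\eqref{s_r_02} to identify every term on the right-hand side of \eqref{m_e_e_01}. Multiplying \eqref{s_r_00} by $b$ yields
\[
b \Delta_{B_\ep} v_\ep + \frac{b^2 v_\ep}{\ep^2}(1 - \abs{v_\ep}^2) = b(\alpha + i \ale \beta) \dt v_\ep - \nab b \cdot \nab_{B_\ep} v_\ep - 2 i b Z_\ep \cdot \nab_{B_\ep} v_\ep - b v_\ep f_\ep,
\]
and taking the real scalar product with $\dt v_\ep$ kills the $\beta \ale$ term via $(\dt v_\ep, i \dt v_\ep) = 0$, leaving $\alpha b \abs{\dt v_\ep}^2 - \nab b \cdot (\dt v_\ep, \nab_{B_\ep} v_\ep) - 2bZ_\ep \cdot (\dt v_\ep, i\nab_{B_\ep} v_\ep) - \hal b f_\ep \dt\abs{v_\ep}^2$. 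The $\nab b$ contribution then combines with the $b \diverge(\dt v_\ep, \nab_{B_\ep} v_\ep)$ supplied by Lemma \ref{en_evolve} through the product rule $b \diverge X + \nab b \cdot X = \diverge(bX)$, producing the $\diverge(b(\dt v_\ep, \nab_{B_\ep} v_\ep))$ appearing in \eqref{m_e_e_01}.

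In the same vein, \eqref{s_r_01} gives $\nab^\perp h_\ep' + b(iv_\ep, \nab_{B_\ep} v_\ep) = \sigma \dt B_\ep - (\abs{v_\ep}^2 - 1) b Z_\ep$, so dotting with $-\dt B_\ep$ contributes $-\sigma \abs{\dt B_\ep}^2 + (\abs{v_\ep}^2 - 1) b Z_\ep \cdot \dt B_\ep$. Passage from $g_\ep$ to $\tilde{g}_\ep$ kills the surviving $f_\ep$ piece, since $f_\ep$ is independent of $t$ and hence $\dt \tilde{g}_\ep - \dt g_\ep = -\hal b f_\ep \dt \abs{v_\ep}^2$. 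The residual forcing is then
\[
2 b Z_\ep \cdot (\dt v_\ep, i \nab_{B_\ep} v_\ep) + (\abs{v_\ep}^2 - 1) b Z_\ep \cdot \dt B_\ep,
\]
and the key algebraic step is to recognize this as $b Z_\ep \cdot V(v_\ep, B_\ep)$ through the pointwise identity $V(v, B) = 2(\dt v, i \nab_B v) + (\abs{v}^2 - 1)\dt B$. This identity is the main obstacle, though it is elementary: one unpacks \eqref{V_def} using $(\nab_B v)_k = \partial_k v - i B_k v$ and the algebraic relation $\partial_k(iv, \dt v) - \dt(iv, \partial_k v) = 2(\dt v, i\partial_k v)$, which in turn follows from the antisymmetry $(ia, b) = -(a, ib)$ combined with the symmetry of $(\cdot, \cdot)$.

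For \eqref{m_e_e_02} I would integrate \eqref{m_e_e_01} over $\Omega$. Stokes' theorem gives $\int_\Omega \curl(h_\ep' \dt B_\ep) = \int_{\partial \Omega} h_\ep' \dt B_\ep \cdot \tau$, which vanishes since $h_\ep' = 0$ on $\partial \Omega$ by \eqref{s_r_02}. For the divergence term, the boundary condition $\nab_{B_\ep} v_\ep \cdot \nu = -\hal v_\ep \nab \log b \cdot \nu$ yields
\[
\int_{\partial \Omega} b (\dt v_\ep, \nab_{B_\ep} v_\ep \cdot \nu) = -\hal \int_{\partial \Omega} b (\dt v_\ep, v_\ep) \nab \log b \cdot \nu = -\frac{d}{dt} \int_{\partial \Omega} \frac{\abs{v_\ep}^2 - 1}{4} \nab b \cdot \nu,
\]
and transposing this to the left-hand side exactly builds the boundary integral in the definition of $\tilde{F}_\ep$, producing \eqref{m_e_e_02}.
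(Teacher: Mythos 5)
Your proposal is correct and follows essentially the same route as the paper: apply Lemma \ref{en_evolve} to $(v_\ep,B_\ep)$, substitute the PDEs \eqref{s_r_00}--\eqref{s_r_01}, absorb the $\nab b$ and $f_\ep$ contributions, invoke the identity $V(v_\ep,B_\ep) = 2(\dt v_\ep, i\nab_{B_\ep} v_\ep) + (\abs{v_\ep}^2-1)\dt B_\ep$, and then integrate using the boundary conditions \eqref{s_r_02} to produce the boundary term in $\tilde{F}_\ep$. The only difference is cosmetic: you sketch the derivation of the $V$ identity from \eqref{V_def}, whereas the paper simply cites it (Lemma 2.12 of \cite{tice_2}).
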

\begin{proof}
We plug  the equations of Lemma \ref{specific_reformulation} into Lemma \ref{en_evolve} to find
\begin{multline}
 \dt g_\ep(v_\ep,B_\ep) = b \diverge(\dt v_\ep,\nab_{B_\ep} v_\ep) + \curl(\curl{B_\ep}(\dt B_\ep )) \\
- (\dt v_\ep, (\alpha + i \ale \beta) b \dt v_\ep  -\nab b \cdot \nab_{B_\ep} v_\ep - 2 i b Z_\ep \cdot  \nab_{B_\ep} v_\ep ) \\
- (\dt B_\ep  )\cdot (\sigma \dt B_\ep - (\abs{v_\ep}^2-1)  b Z_\ep   ) 
+ (\dt v_\ep,v_\ep) b f_\ep .
\end{multline}
The equation \eqref{m_e_e_01} follows from an expansion of these terms and  the equality  (see Lemma 2.12 of \cite{tice_2})
\begin{equation}\label{m_e_e_1}
 (\dt v_\ep,2i\nab_{B_\ep} v_\ep) = V(v_\ep,B_\ep) + (1-\abs{v_\ep}^2)\dt B_\ep.
\end{equation}
The equation \eqref{m_e_e_02} follows from integrating \eqref{m_e_e_01} over $\Omega$ and  applying the divergence theorem and the boundary conditions \eqref{s_r_02}  to get
\begin{multline}
 \int_\Omega \diverge\left( b(\dt v_\ep,\nab_{B_\ep} v_\ep) \right)+ \curl(h_\ep' \dt B_\ep ) =  \int_{\partial \Omega} b (\dt v_\ep, - \hal v_\ep \nab \log{b} \cdot \nu) \\
= -\dt \int_{\partial \Omega} \frac{(\abs{v_\ep}^2 -1)}{4} \nab b \cdot \nu.
\end{multline}\end{proof}

Since our modified energy $\tilde{F}_\ep$ is not positive definite, we must prove a result that controls $F_\ep$ in terms of $\tilde{F}_\ep$ and a negligible error.  This is the content of the next lemma.

\begin{lem}\label{energy_comparison}
 For $\ep$ sufficiently small we have the estimate
\begin{equation}\label{e_c_01}
 F_\ep(v_\ep,B_\ep) \le (1+ C\ep^{1/4} ) \tilde{F}_\ep(v_\ep,B_\ep) + C \ep^{1/4}
\end{equation}
for a constant $C>0$. 
\end{lem}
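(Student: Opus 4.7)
The plan is to write $F_\ep - \tilde{F}_\ep$ explicitly from the definitions and bound the resulting error in terms of $F_\ep$ itself, then solve for $F_\ep$. From the definitions of $\tilde{g}_\ep$ and $\tilde{F}_\ep$,
\begin{equation}
F_\ep(v_\ep,B_\ep) - \tilde{F}_\ep(v_\ep,B_\ep) = \int_\Omega \frac{\abs{v_\ep}^2-1}{2}\, b f_\ep - \int_{\partial\Omega} \frac{\abs{v_\ep}^2-1}{4}\, \nab b \cdot \nu,
\end{equation}
so the proof reduces to estimating these two terms.

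For the bulk term, first note that $f_\ep = \frac{\Delta\sqrt{b}}{\sqrt{b}} - \abs{Z_\ep}^2 + \beta \ale^2 \phib$ satisfies $\|b f_\ep\|_{L^\infty} \le C\ale^2$ thanks to the smoothness of $b,Z,\phib$ and the fact that $Z_\ep = \ale Z$. By the definition of $F_\ep$ we have $\|1-\abs{v_\ep}^2\|_{L^2(\Omega)} \le C\ep \sqrt{F_\ep}$, so Cauchy--Schwarz gives
\begin{equation}
\left|\int_\Omega \frac{\abs{v_\ep}^2-1}{2} b f_\ep\right| \le C \ale^2 \ep \sqrt{F_\ep} \le C\ep^{1/2}\sqrt{F_\ep},
\end{equation}
since $\ep\ale^2 = \ep|\log\ep|^2 = o(\ep^{1/2})$.

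The boundary term is the main technical issue, because in this mixed flow no maximum principle guarantees $\abs{v_\ep}\le 1$, so one cannot simply bound $\abs{v_\ep}^2 - 1$ by $1 - \abs{v_\ep}$. I would handle this by splitting $\partial\Omega$ into the sets $A = \{\abs{v_\ep}\le 2\}$ and $B = \{\abs{v_\ep}>2\}$. On $A$ one has $\bigl|\abs{v_\ep}^2-1\bigr| \le 3\bigl|1-\abs{v_\ep}\bigr|$, while on $B$ one has $\bigl|\abs{v_\ep}^2-1\bigr| = (\abs{v_\ep}-1)(\abs{v_\ep}+1) \le 3(\abs{v_\ep}-1)^2 = 3\bigl|1-\abs{v_\ep}\bigr|^2$ since $\abs{v_\ep}-1 \ge 1$ there. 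Applying Lemma~\ref{bndry_conv} with $q=4$ yields $\|1-\abs{v_\ep}\|_{L^4(\partial\Omega)} \le C\ep^{1/4}\sqrt{F_\ep}$, and consequently (by H\"older on $\partial\Omega$) also $\|1-\abs{v_\ep}\|_{L^1(\partial\Omega)} \le C\ep^{1/4}\sqrt{F_\ep}$ and $\|1-\abs{v_\ep}\|_{L^2(\partial\Omega)}^2 \le C\ep^{1/2} F_\ep$. Combining,
\begin{equation}
\left|\int_{\partial\Omega}\frac{\abs{v_\ep}^2-1}{4}\nab b\cdot\nu\right| \le C\ep^{1/4}\sqrt{F_\ep} + C\ep^{1/2} F_\ep.
\end{equation}

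Putting both estimates together and using $\sqrt{F_\ep} \le \tfrac{1}{2}(1+F_\ep)$ gives
\begin{equation}
\bigl|F_\ep - \tilde{F}_\ep\bigr| \le C\ep^{1/4}(1 + F_\ep)
\end{equation}
for all $\ep$ sufficiently small. Consequently $F_\ep \le \tilde{F}_\ep + C\ep^{1/4}(1+F_\ep)$, and absorbing the $C\ep^{1/4} F_\ep$ term on the left (valid for $\ep$ small so that $C\ep^{1/4} < 1/2$) yields
\begin{equation}
F_\ep \le \frac{\tilde{F}_\ep + C\ep^{1/4}}{1 - C\ep^{1/4}} \le (1 + C'\ep^{1/4})\tilde{F}_\ep + C'\ep^{1/4},
\end{equation}
which is \eqref{e_c_01}. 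The main obstacle is the boundary term, which requires both the trace estimates of Lemma~\ref{bndry_conv} and the case split to deal with the lack of an a priori pointwise bound on $\abs{v_\ep}$.
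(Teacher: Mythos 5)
Your proof is correct and follows the same overall strategy as the paper's: write out $F_\ep - \tilde{F}_\ep$, estimate the bulk and boundary terms in terms of $F_\ep$ using Lemma~\ref{bndry_conv}, and absorb. The one place you genuinely diverge is the treatment of $\abs{v_\ep}^2-1$ on the boundary (where, as you note, no maximum principle is available). You split $\partial\Omega$ into $\{\abs{v_\ep}\le 2\}$ and $\{\abs{v_\ep}>2\}$ and use pointwise comparisons on each piece, ending up with $L^1$ and $L^2$ norms of $1-\abs{v_\ep}$. The paper instead uses the global identity $1-\abs{v_\ep}^2 = -(1-\abs{v_\ep})^2 + 2(1-\abs{v_\ep})$, which achieves exactly the same decomposition into a linear and a quadratic part without any case split, and then estimates $\pnormspace{\abs{v_\ep}^2-1}{4}{\partial\Omega}$ via $\pnormspace{1-\abs{v_\ep}}{8}{\partial\Omega}^2 + \pnormspace{1-\abs{v_\ep}}{4}{\partial\Omega}$ and H\"older against $\nab b\cdot\nu \in L^{4/3}(\partial\Omega)$. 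The paper's identity is a touch slicker; your case split is more explicit about why the quadratic and linear regimes appear, and in fact yields a marginally sharper power of $\ep$ on the $F_\ep$-linear piece. Your bulk-term estimate (Hölder $L^\infty$--$L^1$ using $\norm{bf_\ep}_{L^\infty}\le C\ale^2$) also differs cosmetically from the paper's Young inequality argument but is entirely sound. One small point you leave implicit, as does the paper, is that the final division by $1-C\ep^{1/4}$ requires $\tilde{F}_\ep + C\ep^{1/4}\ge 0$; this follows from the intermediate estimate combined with $F_\ep\ge 0$, so no harm is done.
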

\begin{proof}
Write $\chi := \nab b \cdot \nu$.  We  apply Cauchy's inequality to find
\begin{multline}\label{e_c_1}
 \abs{\int_\Omega \frac{b (\abs{v_\ep}^2 -1) f_\ep}{2} } \le \hal \int_\Omega \frac{b^2 (\abs{v_\ep}^2 -1)^2 }{4\ep}  + \frac{\ep}{2} \int_\Omega \abs{f_\ep}^2 \\
 \le  \frac{\ep}{2} F_\ep(v_\ep,B_\ep) + \frac{\ep}{2} \int_\Omega \abs{f_\ep}^2 
\le \frac{\ep}{2} F_\ep(v_\ep,B_\ep) + C \ep \ale^4.
\end{multline}
For the boundary term we estimate with H\"{o}lder
\begin{equation}\label{e_c_2}
 \abs{\int_{\partial \Omega} \frac{(\abs{v_\ep}^2-1) \chi}{4}  } \le \pnormspace{\abs{v_\ep}^2 - 1}{4}{\partial \Omega} \pnormspace{\chi }{4/3}{\partial \Omega}.
\end{equation}
By Lemma \ref{bndry_conv} we know that
\begin{equation}
 \pnormspace{\abs{v_\ep} - 1}{q}{\partial \Omega} \le C \ep^{1/q} \sqrt{F_\ep(v_\ep,B_\ep)}
\end{equation}
for any $2 < q < \infty$.  We may then rewrite 
\begin{equation}
 1 - \abs{v_\ep}^2 = -(1-\abs{v_\ep})^2 + 2(1-\abs{v_\ep})
\end{equation}
in order to bound
\begin{multline}\label{e_c_3}
 \pnormspace{\abs{v_\ep}^2 - 1}{4}{\partial \Omega} \le C \left(\pnormspace{\abs{v_\ep} - 1}{8}{\partial \Omega}^2 + \pnormspace{\abs{v_\ep} - 1}{4}{\partial \Omega}    \right) \\
\le C \ep^{1/4}( F_\ep(v_\ep,B_\ep) + \sqrt{F_\ep(v_\ep,B_\ep)}   ).
\end{multline}
We then combine \eqref{e_c_2} and \eqref{e_c_3} and again use Cauchy to get the estimate
\begin{equation}\label{e_c_4}
 \abs{\int_{\partial \Omega} \frac{(\abs{v_\ep}^2-1) \chi}{4}  }  \le C \ep^{1/4}( F_\ep(v_\ep,B_\ep)     +  1  ).
\end{equation}
Now from \eqref{e_c_1} and \eqref{e_c_4} we know that
\begin{multline}
 \tilde{F}_\ep(v_\ep,B_\ep) = F_\ep(v_\ep,B_\ep) + \int_\Omega \frac{b (1- \abs{v_\ep}^2 )}{2} f_\ep + \int_{\partial \Omega} \frac{(\abs{v_\ep}^2-1) \chi}{4} \\
\ge F_\ep(v_\ep,B_\ep) - \frac{\ep}{2} F_\ep(v_\ep,B_\ep) - C \ep \ale^4 - C \ep^{1/4}( F_\ep(v_\ep,B_\ep)     +  1  ) \\
\ge (1- C\ep^{1/4} )  F_\ep(v_\ep,B_\ep) - C \ep^{1/4}
\end{multline}
when $\ep$ is sufficiently small.  This yields the desired inequality.

\end{proof}

As we mentioned, the control of the energy growth comes from the product estimate from \cite{ss_prod}, which we now state in our context.

\begin{prop}\label{prod_est}
Let $(v_\ep,B_\ep)$ solve \eqref{s_r_00}--\eqref{s_r_02} and suppose that
\begin{equation}\label{p_e_0}
\sup_{0 \le t \le T_*} F_\ep(v_\ep,B_\ep)(s) + \int_0^{T_*} \int_\Omega \alpha b \abs{\dt v_\ep}^2 + \sigma \abs{\dt B_\ep}^2 \le C \ale
\end{equation}
for some fixed $T_*>0$.  Then the following hold, up to extraction of a subsequence.
\begin{enumerate}
 \item There exist $\mu \in L^\infty([0,T_*];\mathcal{M}(\Omega)) $ and   $V \in L^2([0,T_*]; (\mathcal{M}(\Omega))^2 )$ such that $\dt \mu + \curl{V} = 0$ and $\mu(v_\ep,B_\ep) \rightarrow \mu$, $V(u_\ep,B_\ep) \rightarrow V$ as $\ep \rightarrow 0$ in $(C^{0,1}(\Omega\times[0,T_*]))^*$.  Here we have written $\mathcal{M}(\Omega) = (C^0(\Omega))^*$ for the space of bounded Radon measures.

 \item For any $[t_1,t_2] \subseteq [0,T_*]$, any $Y \in C^0(\Omega \times [t_1,t_2];\Rn{2})$, $\psi \in C^0(\Omega \times[t_1,t_2])$, we have the bound 
\begin{equation}\label{p_e_1}
\hal \abs{\int_{t_1}^{t_2} \int_\Omega  V  \cdot \psi Y} 
\le \liminf_{\ep \rightarrow 0} \frac{1}{\ale}\left( \int_{t_1}^{t_2}  \int_\Omega \abs{\nab_{B_\ep} v_\ep \cdot Y}^2  \int_{t_1}^{t_2}  \int_\Omega \abs{ \psi \dt v_\ep}^2  \right)^{1/2}.
\end{equation}

\item The mapping $t \mapsto \langle \mu(t),\xi \rangle$ is in $H^1([0,T_*])$ for any $\xi \in C_c^1(\Omega)$, and in particular 
\begin{equation}
 \abs{\langle\mu(t_2),\xi\rangle-\langle\mu(t_1),\xi\rangle} 
\le C \sqrt{t_2-t_1} \liminf_{\ep \rightarrow 0} \frac{1}{\sqrt{\ale}} \left( \int_{t_1}^{t_2} \int_\Omega \abs{\dt  v_\ep}^2  \right)^{1/2}
\end{equation}
for any $[t_1,t_2]\subseteq [0,T]$.

\end{enumerate}
\end{prop}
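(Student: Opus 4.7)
The plan is to recognize this proposition as essentially the product estimate of Sandier--Serfaty \cite{ss_prod} transplanted into the gauged, pinned setting, and to verify that each of the three items follows by combining standard Jacobian/product-estimate machinery with the a priori bounds of Section \ref{sec3} and the gauge-invariant identity \eqref{m_e_e_1} already used in Lemma \ref{mod_en_evolve}. The underlying philosophy is that $(iv_\ep, \nab_{B_\ep} v_\ep) + B_\ep$ and $(\dt v_\ep, i \nab_{B_\ep} v_\ep)$ play the roles of the current and velocity 1-form of the ungauged setting, so that the arguments of \cite{ss_prod} transfer verbatim provided the error terms proportional to $(1-|v_\ep|^2)$ are negligible at the scale $\ale$.

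For item (1), I would start from the gauge-invariant Jacobian estimate, which under the hypothesis \eqref{p_e_0} yields $\sup_{t \in [0,T_*]} \|\mu(v_\ep, B_\ep)(t)\|_{\mathcal{M}(\Omega)} \le C$ (using that $\inf b > 0$ so that vortex energy is genuinely of order $\ale$). The continuity equation $\dt \mu(v_\ep, B_\ep) + \curl V(v_\ep, B_\ep) = 0$ follows from $d \circ d = 0$ applied to the space-time Jacobian. The dissipation bound on $\dt v_\ep, \dt B_\ep$ in \eqref{p_e_0}, combined with the energy bound on $\nab_{B_\ep} v_\ep$, controls $V(v_\ep, B_\ep)$ in a dual space, so Aubin--Lions-type arguments produce the claimed subsequential limits in $(C^{0,1}(\Omega \times [0, T_*]))^*$.

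For item (2), the algebraic identity \eqref{m_e_e_1} gives
\begin{equation*}
V(v_\ep, B_\ep) \cdot Y = 2(\dt v_\ep, i \nab_{B_\ep} v_\ep \cdot Y) - (1-|v_\ep|^2)\, \dt B_\ep \cdot Y.
\end{equation*}
Multiplying by $\psi$, integrating over $\Omega \times [t_1, t_2]$, and applying Cauchy--Schwarz to the first term bounds it by
\begin{equation*}
2\left( \int_{t_1}^{t_2}\!\int_\Omega |\psi \dt v_\ep|^2 \right)^{1/2} \left( \int_{t_1}^{t_2}\!\int_\Omega |\nab_{B_\ep} v_\ep \cdot Y|^2 \right)^{1/2}.
\end{equation*}
The remainder involves $\int (1-|v_\ep|^2) \psi\, \dt B_\ep \cdot Y$, which Lemma \ref{bndry_conv} bounds by $C \ep^{2/r} \sqrt{\ale}$ times $\|\dt B_\ep\|_{L^2}$, hence by $C \ep^{2/r} \ale = o(\ale)$. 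After dividing by $\ale$ and taking the $\liminf$, this error disappears and the inequality \eqref{p_e_1} survives with the factor $\frac{1}{2}$ on the left.

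For item (3), given $\xi \in C_c^1(\Omega)$, the continuity equation yields $\langle \mu(t_2), \xi\rangle - \langle \mu(t_1), \xi\rangle = \int_{t_1}^{t_2}\!\int_\Omega V \cdot \nab^\bot \xi$; invoking \eqref{p_e_1} with $\psi \equiv 1$ and $Y = \nab^\bot \xi$ and using the energy bound $\int_{t_1}^{t_2} \int_\Omega |\nab_{B_\ep} v_\ep \cdot \nab^\bot \xi|^2 \le C \|\nab^\bot \xi\|_{L^\infty}^2 \ale (t_2 - t_1)$ gives the $\sqrt{t_2-t_1}$ factor and produces $\langle \mu(\cdot), \xi\rangle \in H^1$. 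The main technical obstacle is making the $o(\ale)$ error estimate in item (2) uniform in the test pair $(\psi, Y)$ and quantitative enough to survive taking $\liminf$; this is precisely where the absence of a maximum principle for the mixed flow forces us to rely on the Section \ref{sec3} bounds on $1-|v_\ep|$ and on $\dt B_\ep$ rather than on pointwise control of $|v_\ep|$.
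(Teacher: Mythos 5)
Your derivation of the key inequality \eqref{p_e_1} does not work: naive Cauchy--Schwarz is off from the claimed estimate by a full factor of $\ale$. Starting from $V(v_\ep,B_\ep)\cdot Y = 2(\dt v_\ep, i\nab_{B_\ep}v_\ep\cdot Y) - (1-|v_\ep|^2)\dt B_\ep\cdot Y$ and applying Cauchy--Schwarz to the first term gives
\begin{equation*}
\tfrac{1}{2}\Bigl|\int\!\!\int V(v_\ep,B_\ep)\cdot\psi Y\Bigr| \le \left(\int\!\!\int|\psi\dt v_\ep|^2 \int\!\!\int|\nab_{B_\ep}v_\ep\cdot Y|^2\right)^{1/2} + (\text{error}),
\end{equation*}
and under \eqref{p_e_0} both integrals on the right are each of order $\ale$, so the right side is of order $\ale$. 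But \eqref{p_e_1} asserts a bound with the extra prefactor $\frac{1}{\ale}$, hence of order $1$ --- which is what one needs, since the left side $\frac{1}{2}|\int\!\!\int V\cdot\psi Y|$ is a bounded quantity (an integral of a finite measure against a continuous function), not something that blows up like $\ale$. Your suggestion to ``divide by $\ale$ and take $\liminf$'' does not rescue this: doing so would also force you to divide the left side by $\ale$, reducing it to $\liminf_\ep\frac{1}{\ale}|\int\!\!\int V(v_\ep,B_\ep)\cdot\psi Y| = 0$, and you would be left with the vacuous statement $0\le$ (something nonnegative). The factor-$\frac{1}{\ale}$ gain over Cauchy--Schwarz is precisely the content of the product estimate of \cite{ss_prod}, a non-trivial theorem that exploits the concentration of $\dt v_\ep$ and $\nab v_\ep$ near the vortex cores; it cannot be reproduced by a pointwise or integral Cauchy--Schwarz argument.

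The paper instead first reduces to the ungauged situation by combining the pointwise bound $|\nab v_\ep|\le|\nab_{B_\ep}v_\ep|+|v_\ep||B_\ep|$ with \eqref{b_b_04} to get $\sup_t\int|\nab v_\ep|^2 + \ep^{-2}(1-|v_\ep|^2)^2 + \int\!\!\int|\dt v_\ep|^2 \le C\ale$, and then invokes Theorem 3 of \cite{ss_prod} to obtain both item (1) (for $\mu(v_\ep,0), V(v_\ep,0)$) and the estimate \eqref{p_e_2} in terms of the defect measures $\nu_Y$ and $\nu_\psi$ of $L^2$-convergence of $|\nab v_\ep\cdot Y|/\sqrt{\ale}$ and $|\psi\dt v_\ep|/\sqrt{\ale}$. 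The substitution of $\nab_{B_\ep}v_\ep$ for $\nab v_\ep$ is then handled by showing that the two defect measures agree, which follows from the expansion $|\nab_{B_\ep}v_\ep\cdot Y|^2 = |\nab v_\ep\cdot Y|^2 - 2(iv_\ep,\nab v_\ep)\cdot Y(B_\ep\cdot Y) + |v_\ep|^2|B_\ep\cdot Y|^2$ together with the strong $L^2((0,T_*);L^r)$ convergence of $B_\ep/\sqrt{\ale}$ in Proposition \ref{B_limits}. You cannot skip that transfer step, and you certainly cannot replace the product estimate itself with Cauchy--Schwarz. Item (3) in your sketch is fine in spirit once \eqref{p_e_1} is available, but it inherits the gap.
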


\begin{proof}
To begin we note that 
\begin{equation}
 \abs{\nab v_\ep} \le \abs{\nab_{B_\ep} v_\ep} + \abs{v_\ep}\abs{B_\ep}
\end{equation}
so that \eqref{b_b_04} of Proposition \ref{B_bound}  and \eqref{p_e_0}, combined with the fact that $b$ is bounded below by a constant independent of $\ep$, imply that 
\begin{equation}
 \sup_{0\le t \le T_*}  \int_\Omega \frac{\abs{\nab v_\ep(t)}^2}{2} + \frac{(1-\abs{v_\ep(t)}^2)^2}{4\ep^2} + \int_0^{T_*} \int_\Omega \abs{\dt v_\ep}^2 \le C \ale.
\end{equation}
This means that Theorem 3 of \cite{ss_prod} is applicable and provides the results of item 1 for $V(v_\ep,0)$ and $\mu(v_\ep,0)$.  However,
the definitions of $\mu, V$ (\eqref{mu_def} and \eqref{V_def})  imply that 
\begin{equation}
\mu(v_\ep, B_\ep)-\mu(v_\ep,0)= \curl \( (1- |v_\ep|^2 )B_\ep\) 
\end{equation}
and 
\begin{equation}
V(v_\ep,B_\ep) - V(v_\ep,0)= \dt \((1-|v_\ep|^2 ) B_\ep\). 
\end{equation}
 The bounds on $F_\ep $ and on $B_\ep$
 \eqref{b_b_03}  imply that $V(v_\ep,B_\ep) - V(v_\ep,0) \rightarrow 0$ and $\mu(v_\ep,0) - \mu(v_\ep,B_\ep) \rightarrow 0$ in the dual of $W^{1, \infty}$ and hence in $(C^{0,1})^*$, so the results in item 1 hold as stated.  

For item 2 we note that Theorem 3 of \cite{ss_prod} also yields the estimate
\begin{equation}\label{p_e_2}
\hal \abs{\int_{t_1}^{t_2} \int_\Omega  V  \cdot \psi Y} 
\le  \left(   \int_{\Omega\times[t_1,t_2] }\nu_Y \right)^{1/2} \left(  \int_{\Omega\times[t_1,t_2]} \nu_\psi  \right)^{1/2}
\end{equation}
where $\nu_Y$ and $\nu_\psi$ are the defect measures of $L^2(\Omega \times [0,T_*])$ convergence of 
\begin{equation}
 \frac{\abs{\nab v_\ep \cdot Y}}{\sqrt{\ale}} \text{ and } \frac{\abs{\psi \dt v_\ep}}{\sqrt{\ale}},
\end{equation}
respectively.  However, 
\begin{equation}
 \abs{\nab_{B_\ep} v_\ep \cdot Y}^2 = \abs{\nab v_\ep \cdot Y}^2 - 2(iv_\ep,\nab v_\ep) \cdot Y (B_\ep \cdot Y) + \abs{v_\ep}^2 \abs{B_\ep \cdot Y}^2, 
\end{equation}
so the convergence results in Proposition \ref{B_limits} and the bounds \eqref{b_b_02} of Proposition \ref{B_bound} guarantee that the defect measure of  $L^2(\Omega \times [0,T_*])$ convergence of 
\begin{equation}
 \frac{\abs{\nab_{B_\ep} v_\ep \cdot Y}}{\sqrt{\ale}} 
\end{equation}
coincides with $\nu_Y$.  Then \eqref{p_e_1} follows from \eqref{p_e_2} since 
\begin{multline}
\left(   \int_{\Omega\times[t_1,t_2] }\nu_Y \right)^{1/2} \left(  \int_{\Omega\times[t_1,t_2]} \nu_\psi  \right)^{1/2} \\
\le   \liminf_{\ep \rightarrow 0} \frac{1}{\ale}\left( \int_{t_1}^{t_2}  \int_\Omega \abs{\nab_{B_\ep} v_\ep \cdot Y}^2  \int_{t_1}^{t_2}  \int_\Omega \abs{ \psi \dt v_\ep}^2  \right)^{1/2}.
\end{multline}
Item 3 follows easily from item 2 as in Theorem 3 of \cite{ss_prod}.
\end{proof}

We then deduce the control on the energy growth, following the same method as in \cite{tice_2}.

\begin{thm}\label{time_bound}
Let $(v_\ep,B_\ep)$ be solutions to \eqref{s_r_00}--\eqref{s_r_02}  in the $\Phi_\ep=\ale \phib$ gauge.   Suppose that the initial data $(v_\ep(0),B_\ep(0))$ are well-prepared in the sense of \eqref{well_prepared_def}. Fix $C_0>0$.  Then there exists a constant $T_0= T_0(C_0)>0$ such that, as $\ep \rightarrow 0$,
\begin{equation}\label{t_b_1}
 \tilde{F}_\ep(v_\ep,B_\ep)(t) < \tilde{F}_\ep(v_\ep,B_\ep)(0) + C_0 \ale  \text{ for all } t\in [0,T_0]
\end{equation}
and 
\begin{equation}
 \int_0^{T_0} \int_\Omega \alpha b\abs{\dt v_\ep}^2 + \sigma \abs{\dt B_\ep}^2  < C_0 \ale.
\end{equation}
\end{thm}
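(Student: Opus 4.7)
The plan is to use a continuity/bootstrap argument anchored on the energy evolution identity \eqref{m_e_e_02} from Lemma \ref{mod_en_evolve} together with the product estimate from Proposition \ref{prod_est}. Define $T_\ep$ to be the supremum of times $t\in[0,T_0]$ (with $T_0$ to be chosen) for which both bounds of the theorem hold on $[0,t]$, with a slightly enlarged constant. The well-preparedness \eqref{well_prepared_def} and Lemma \ref{energy_comparison} ensure that on $[0, T_\ep]$ the standard free energy satisfies $F_\ep(v_\ep, B_\ep) \le K\ale$ uniformly, so the hypotheses of Proposition \ref{prod_est} are met.

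Integrating the identity \eqref{m_e_e_02} over $[0,t]$ with $t \le T_\ep$ gives
\begin{equation*}
\tilde F_\ep(t) - \tilde F_\ep(0) + \int_0^t \int_\Omega \alpha b |\dt v_\ep|^2 + \sigma |\dt B_\ep|^2 = \int_0^t \int_\Omega b Z_\ep \cdot V(v_\ep, B_\ep).
\end{equation*}
The crux is to estimate the forcing integral on the right. Since $Z_\ep = \ale Z$ with $Z$ smooth and bounded, a naive Cauchy-Schwarz in space-time yields an unacceptable $O(\ale^3 t)$. The product estimate of Proposition \ref{prod_est}, item 2, applied at the $\ep$-level (as in the proof from \cite{ss_prod}), provides the crucial factor of $1/\ale$: taking the test field $Y = bZ$ and using $\int_0^t \int_\Omega |\nab_{B_\ep} v_\ep \cdot bZ|^2 \le C \int_0^t F_\ep \le C t\ale$, one obtains
\begin{equation*}
\left|\int_0^t \int_\Omega b Z_\ep \cdot V(v_\ep,B_\ep)\right| \le C\sqrt{t\ale}\left(\int_0^t \int_\Omega |\dt v_\ep|^2\right)^{1/2} + o(\ale).
\end{equation*}
Young's inequality, with the small parameter chosen so as to absorb $\int |\dt v_\ep|^2$ into the dissipation $\int \alpha b |\dt v_\ep|^2$, then yields
\begin{equation*}
\tilde F_\ep(t) - \tilde F_\ep(0) + \hal \int_0^t \int_\Omega \alpha b |\dt v_\ep|^2 + \sigma |\dt B_\ep|^2 \le C t\ale + o(\ale).
\end{equation*}

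Setting $T_0 := C_0/(2C)$, this provides strict inequality in both desired bounds on $[0, T_\ep \wedge T_0]$ once $\ep$ is small, so by continuity of $t \mapsto \tilde F_\ep(t)$ and $t \mapsto \int_0^t \int \alpha b|\dt v_\ep|^2 + \sigma |\dt B_\ep|^2$ the supremum $T_\ep$ cannot be strictly less than $T_0$; hence the two bounds hold throughout $[0,T_0]$.

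The main obstacle is obtaining the $\ep$-level version of the product estimate, since Proposition \ref{prod_est} is stated only for the limiting $V$. Its proof (following \cite{ss_prod}) in fact delivers an $\ep$-level analog with a small error, and this is what is used above. A related technical point is that the decomposition $V(v_\ep,B_\ep) = (\dt v_\ep, 2i\nab_{B_\ep} v_\ep) - (1-|v_\ep|^2)\dt B_\ep$ from \eqref{m_e_e_1} introduces the lower-order term $(1-|v_\ep|^2)\dt B_\ep$ whose contribution to the forcing integral must be shown to be $o(\ale)$. This follows from the $L^q$ estimate for $1-|v_\ep|^2$ in Lemma \ref{bndry_conv} and the $L^2$-in-time bound on $\dt B_\ep$ already built into the bootstrap assumption, together with Young's inequality to absorb a fraction of $\int \sigma|\dt B_\ep|^2$.
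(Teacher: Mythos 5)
Your proposal rests on an ``$\ep$-level analog'' of the product estimate of Proposition \ref{prod_est}, but this is a genuine gap, not a minor technical point. The product estimate as stated (and as proved in \cite{ss_prod}) is an intrinsically asymptotic result: it controls $\int V \cdot \psi Y$ for the \emph{limiting} velocity $V$ by a $\liminf_{\ep\to 0}$ of $\ep$-level quantities, and it is precisely the factor $1/\ale$ on the right-hand side that makes the argument close. At fixed $\ep$ the only available pointwise bound is $|V(v_\ep,B_\ep)\cdot Y|\le 2|\dt v_\ep|\,|\nab_{B_\ep} v_\ep\cdot Y| + o(1)$, which after Cauchy--Schwarz produces an estimate \emph{without} the $1/\ale$ gain and hence cannot absorb the forcing into the dissipation: one gets $|\int_0^t\int b Z_\ep\cdot V_\ep|\lesssim \ale\sqrt{t\ale\cdot K\ale} = O(\ale^2)$, which dwarfs $C_0\ale$. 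The $1/\ale$ improvement reflects a nontrivial concentration structure near vortices and is only recovered in the limit. This is exactly why the paper (following Theorem~2.16 of \cite{tice_2}) does \emph{not} run a direct bootstrap: instead it defines $\gamma_\ep$ as the supremum of times on which both inequalities hold, assumes for contradiction that $\liminf\gamma_\ep=0$, rescales time by $\gamma_\ep$, and passes to the limit in the rescaled variables, where Proposition \ref{prod_est} genuinely applies and gives $V\to 0$, i.e.\ the vortices do not move over $[0,1]$ in rescaled time. From there the rescaled energy identity shows $\tilde F_\ep(1)\le \tilde F_\ep(0)+o(\ale)$ and the $\Gamma$-convergence lower bound \eqref{gcv} (with stationary vortices) forces $\tilde F_\ep(1)\ge \tilde F_\ep(0)+o(\ale)$, hence the dissipation is $o(\ale)$; neither defining inequality can be saturated at $t=1$, which is the contradiction.

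There is also a secondary gap in your closing step: even granting the forcing estimate, your final display bounds $\tilde F_\ep(t)-\tilde F_\ep(0)$ \emph{plus} half the dissipation by $Ct\ale+o(\ale)$. To conclude that the dissipation alone is $<C_0\ale$ you need a lower bound showing $\tilde F_\ep(t)-\tilde F_\ep(0)\ge -o(\ale)$, i.e.\ that the energy has not dropped by $O(\ale)$. That is again a statement about the vortices not moving (through the $\Gamma$-convergence lower bound), and it is unavailable in a direct bootstrap before taking the $\ep\to 0$ limit. The rescaled contradiction scheme supplies precisely this input and is not an optional stylistic choice here.
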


\begin{proof}
The proof is essentially the same as Theorem 2.16 of \cite{tice_2}, so we will present only a sketch of the idea.  For full details see \cite{tice_2}.  For any $t\ge 0$, consider the two conditions
\begin{equation}\label{cond1}
 \tilde{F}_\ep(v_\ep,B_\ep)(s) < \tilde{F}_\ep(v_\ep,B_\ep)(0) + C_0 \ale \text{ for all } s\in [0,t]
\end{equation}
and 
\begin{equation}\label{cond2}
 \int_0^{t} \int_\Omega \alpha  b \abs{\dt v_\ep}^2 + \sigma \abs{\dt B_\ep}^2   < C_0 \ale.
\end{equation}
Define 
\begin{equation}
 \gamma_\ep := \sup \{ t\ge 0 \;\vert\; \text{conditions }\eqref{cond1} \text{ and } \eqref{cond2} \text{ hold}   \}.
\end{equation}
The smoothness of $(v_\ep,B_\ep)$ and the well-preparedness of the initial data guarantee the existence of a time $t_\ep>0$ (depending on $\ep$ and $C_0$) such that both conditions hold for $t_\ep$.  Hence $\gamma_\ep > 0$ for each $\ep$.  We show that actually $\gamma_\ep \ge T_0$  for some $T_0>0$ as $\ep \rightarrow 0$, thereby proving the theorem.  Suppose, by way of contradiction, that 
\begin{equation}
\liminf_{\ep\rightarrow 0}  \gamma_\ep =0.
\end{equation}
We may suppose, up to extraction of a subsequence, that $\gamma_\ep \rightarrow 0$ as $\ep \rightarrow 0$.

Rescale in time at scale $\gamma_\ep$ by defining $w_\ep(x,t) = v_\ep(x,\gamma_\ep t)$ and $C_\ep(x,t) = B_\ep(x,\gamma_\ep t)$.   By the definition of $\gamma_\ep$, the inequalities
\begin{equation}\label{cond1'}
  \tilde{F}_\ep(w_\ep,C_\ep)(t) \le \tilde{F}_\ep(w_\ep,C_\ep)(0) + C_0 \ale \text{ for all } t\in [0,1]
\end{equation}
and 
\begin{equation} \label{cond2'}
 \frac{1}{\gamma_\ep} \int_0^{1} \int_\Omega \alpha b\abs{\dt w_\ep}^2 + \sigma \abs{\dt C_\ep}^2   \le C_0 \ale.
\end{equation}
both hold, but at time $t=1$ one of the inequalities must be an equality since $w_\ep$ and $C_\ep$ are smooth.  However, these inequalities and Lemma \ref{energy_comparison} provide a bound for $F_\ep(w_\ep,C_\ep)$, which in turn allows us to apply Proposition \ref{prod_est} to deduce, since $\gamma_\ep \rightarrow 0$,  that $V(w_\ep,C_\ep)\rightarrow 0$ and $\mu(w_\ep,B_\ep) \rightarrow \mu(0)$.  This information and Lemma \ref{mod_en_evolve}, rescaled in time at scale $\gamma_\ep$, then show that neither inequality \eqref{cond1'} nor \eqref{cond2'}  could be an equality at $t=1$, a contradition.  Indeed from $V(w_\ep,C_\ep)\rightarrow 0$ and \eqref{m_e_e_02} properly scaled in time we deduce 
\begin{equation}
 \tilde{F}_\ep(w_\ep , C_\ep)(1) \le  \tilde{F}_\ep(w_\ep, C_\ep)(0)+o(\lep),
\end{equation}
 so there cannot be equality in \eqref{cond1'}.
On the other hand 
 since the limiting vortices do not move (from $V=0$)  by the $\Gamma$-convergence lower bound \eqref{gcv} we must have 
\begin{equation}
\tilde{F}_\ep(w_\ep , C_\ep)(1)  \ge \pi \sum b(a_i(0))\lep +o(\lep)= \tilde{F}_\ep(w_\ep, C_\ep)(0)+ o(\lep). 
\end{equation}
Combining with \eqref{m_e_e_02} again we deduce 
\begin{equation}
\frac{1}{\gamma_\ep}\int_0^1 \io \a b |\p_t w_\ep |^2+ \sigma |\dt C_\ep|^2  =o(\lep),
\end{equation}
and hence  \eqref{cond2'} cannot be an equality either.
\end{proof}

 We now improve the comparison between $F_\ep $ and $\tilde{F_\ep}$ from the result of Theorem \ref{time_bound}.

\begin{cor}\label{energy_bound}
Suppose  the hypotheses of Theorem \ref{time_bound}.  Then as $\ep \rightarrow 0$, 
\begin{equation}\label{en_b_0}
 F_\ep(v_\ep,B_\ep)(t) < F_\ep(v_\ep,B_\ep)(0) + C_0 \ale + o(\ep^{1/8}) \text{ for all } t\in [0,T_0].
\end{equation}
\end{cor}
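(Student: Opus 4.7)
The plan is to chain three ingredients: (i) the time $t=0$ version of the comparison inequality in the \emph{opposite} direction to what is stated in Lemma~\ref{energy_comparison}, (ii) Theorem~\ref{time_bound} controlling the growth of $\tilde{F}_\ep$, and (iii) Lemma~\ref{energy_comparison} itself at time $t$. Throughout, the arithmetic identity $\ep^{1/4}\ale = \ep^{1/4}\abs{\log\ep} = o(\ep^{1/8})$ will be used to absorb all remainder terms into $o(\ep^{1/8})$.

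First, I would revisit the proof of Lemma~\ref{energy_comparison} to extract the two-sided bound. The estimates \eqref{e_c_1} and \eqref{e_c_4} control the absolute value of the bulk and boundary correction terms in $\tilde{F}_\ep - F_\ep$, so they yield not only the stated inequality but also its companion
\begin{equation*}
  \tilde{F}_\ep(v_\ep,B_\ep) \le (1+C\ep^{1/4})F_\ep(v_\ep,B_\ep) + C\ep^{1/4}.
\end{equation*}
Applying this at $t=0$, together with the well-preparedness bound $F_\ep(v_\ep,B_\ep)(0) \le C\ale$ (a consequence of \eqref{well_prepared_def}), gives
\begin{equation*}
  \tilde{F}_\ep(v_\ep,B_\ep)(0) \le F_\ep(v_\ep,B_\ep)(0) + C\ep^{1/4}\ale + C\ep^{1/4}.
\end{equation*}

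Next, I would invoke Theorem~\ref{time_bound} to get $\tilde{F}_\ep(v_\ep,B_\ep)(t) < \tilde{F}_\ep(v_\ep,B_\ep)(0) + C_0 \ale$ for all $t\in[0,T_0]$, and feed the result into the upper bound from Lemma~\ref{energy_comparison}:
\begin{equation*}
  F_\ep(v_\ep,B_\ep)(t) \le (1+C\ep^{1/4})\tilde{F}_\ep(v_\ep,B_\ep)(t) + C\ep^{1/4}.
\end{equation*}
Combining all three inequalities and expanding $(1+C\ep^{1/4})$ produces
\begin{equation*}
 F_\ep(v_\ep,B_\ep)(t) \le F_\ep(v_\ep,B_\ep)(0) + C_0 \ale + C\ep^{1/4}\bigl(F_\ep(v_\ep,B_\ep)(0) + C_0\ale + 1\bigr).
\end{equation*}

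The final step is to observe that all the error terms are $o(\ep^{1/8})$. Indeed, using again $F_\ep(v_\ep,B_\ep)(0) \le C\ale$ and the bound $C_0\ale \le C\ale$, the entire $C\ep^{1/4}(\cdots)$ correction is bounded by $C\ep^{1/4}\ale$, and since $\ep^{1/4}\ale/\ep^{1/8} = \ep^{1/8}\abs{\log\ep} \to 0$, this is $o(\ep^{1/8})$. This yields \eqref{en_b_0} with a non-strict inequality, and the strict inequality follows because for $\ep$ small the $o(\ep^{1/8})$ remainder is strictly positive. The main (and only) subtlety is noticing that the argument of Lemma~\ref{energy_comparison} is symmetric and gives the reverse comparison as well; once that is granted the result is a three-line chain of inequalities.
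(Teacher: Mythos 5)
Your proof is correct and fills in exactly the chain of inequalities that the paper's one-line proof alludes to (Lemma \ref{energy_comparison}, well-preparedness, Theorem \ref{time_bound}); you correctly observe that the proof of Lemma \ref{energy_comparison} actually gives a two-sided comparison, which is what makes the chain close. One small quibble: the final remark that ``the strict inequality follows because for $\ep$ small the $o(\ep^{1/8})$ remainder is strictly positive'' is not quite the right justification (the $o$-symbol does not enforce positivity); the strictness simply propagates from the strict inequality \eqref{t_b_1} in Theorem \ref{time_bound}, or one can trivially enlarge the remainder while keeping it $o(\ep^{1/8})$.
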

\begin{proof} This is a direct consequence of Lemma \ref{energy_comparison}, the well-preparedness and the bound on $\tilde{F}_\ep$ provided by Theorem \ref{time_bound}.

\end{proof}

\section{Convergence results}\label{sec5}

From here on we assume the various assumptions made in the introduction, in particular \eqref{well_prepared_def}, so that there are $n$ initial vortices of degrees $\pm 1$.  We now fix $C_0>0$ so that
\begin{equation}
 C_0 < \pi \left( \inf_{x\in\Omega} b(x) \right)
\end{equation}
and apply Theorem \ref{time_bound} to get $T_0>0$ so that the conclusions of the theorem hold on the interval $[0,T_0]$.  Then Corollary \ref{energy_bound} provides an estimate for $F_\ep(v_\ep,B_\ep)$ for all $t\in[0,T_0]$.

\subsection{Vortex trajectories}

Our first result allows us to define the  $n$ vortex paths in the time interval $[0,T_0]$.

\begin{lem}\label{vortex_path}
Let 
\begin{equation}
\gamma_0 = \min \{\abs{a_i(0) - a_j(0)} \;\vert\; i\neq j\} \cup \{\dist(a_i(0),\partial \Omega)\}. 
\end{equation}
Fix $0<\gamma_* < \gamma_0$ . Then there exists a $T_* = T_*(\gamma_*,C_0)$ with $T_*\in(0,T_0]$ so that
\begin{enumerate}
\item The space-time Jacobian $(\mu(v_\ep,B_\ep),V(v_\ep,B_\ep)) \rightarrow (\mu,V)$ in $(C^{0,1}(\Omega \times [0,T_0]))^*$.
\item There exist functions $a_i\in H^1([0,T_*];\Omega)$, $i=1,\dotsc,n$, so that for all  $t\in[0,T_*]$
\begin{equation}
\min \{\abs{a_i(t) - a_j(t)} \;\vert\; i\neq j\} \cup \{\dist(a_i(t),\partial \Omega)\} \ge \gamma_*, 
\end{equation}
\begin{equation}
\mu(t) = 2\pi \sum_{i=1}^n d_i(0) \delta_{a_i(t)},
\text{ and }
 V(t) = 2\pi \sum_{i=1}^n d_i(0) \dot{a}_i^\bot(t)  \delta_{a_i(t)}.
\end{equation}

\end{enumerate}
\end{lem}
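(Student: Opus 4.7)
The overall strategy is to apply Proposition \ref{prod_est} on $[0,T_0]$ to obtain the general compactness, then rigidify the limit measure $\mu$ using the sub-critical energy gap $\pi \inf b - C_0 > 0$ together with the integer-valuedness of vortex degrees.

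First I would verify the hypothesis \eqref{p_e_0} on $[0,T_0]$ from Corollary \ref{energy_bound} and the dissipation bound in Theorem \ref{time_bound}. Proposition \ref{prod_est} then delivers the limits $(\mu,V)$ with $\dt \mu + \curl V = 0$, giving item 1. At each fixed $t$, the uniform bound $F_\ep(v_\ep,B_\ep)(t)/\ale \le C$, combined with the standard Jacobian structure theorem (adapted to the weighted setting) and the convergence of $\mu_\ep$, forces $\mu(t) = 2\pi \sum_j D_j(t) \delta_{b_j(t)}$ with $D_j(t) \in \mathbb{Z}\setminus \{0\}$, and the weighted $\Gamma$-convergence lower bound \eqref{gcv} produces
\begin{equation*}
\pi \sum_j |D_j(t)| b(b_j(t)) \le \pi \sum_i b(a_i^0) + C_0 + o(1).
\end{equation*}

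The structural step is to show that for small $t$ the atoms of $\mu(t)$ are in bijection with those of $\mu(0)$ and inherit their degrees. I would pick smooth cutoffs $\xi_i \in C_c^1(B(a_i^0,r))$, $r < \gamma_0/3$, with $\xi_i \equiv 1$ near $a_i^0$. Item 3 of Proposition \ref{prod_est} gives that $t \mapsto \langle \mu(t),\xi_i\rangle$ is H\"older continuous, so it stays close to $2\pi d_i$ for small $t$; integer-valuedness of $\sum_{b_j \in \supp \xi_i} D_j(t)$ then forces this sum to equal $d_i$. A cutoff supported away from all $a_i^0$ rules out atoms outside $\bigcup_i B(a_i^0,r)$. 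Any excess atom or degree $\ge 2$ inside some $B(a_i^0,r)$ would push $\sum_j |D_j| b(b_j)$ above $\sum_i b(a_i^0) + \inf b + o(1)$, contradicting $C_0 < \pi \inf b$. Hence $\mu(t) = 2\pi \sum_{i=1}^n d_i \delta_{a_i(t)}$ with a unique $a_i(t) \in B(a_i^0, r)$ for small $t$. Defining $T_*$ as the supremum of $t \in (0,T_0]$ on which the trajectories remain separated by at least $\gamma_*$ from each other and from $\partial \Omega$, the same dichotomy (using smaller radii around the current positions $a_i(s)$) propagates the representation to all of $[0,T_*]$.

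For the $H^1$ regularity, I would test $\mu(t)$ against $\xi_k(x) = \chi_i(x) x_k$, where $\chi_i$ is a smooth cutoff equal to $1$ on a tube containing $\{a_i(s) : s \in [0,T_*]\}$ and vanishing near the other vortex paths; then $\langle \mu(t), \xi_k\rangle = 2\pi d_i (a_i(t))_k$, and item 3 of Proposition \ref{prod_est} transfers its $H^1([0,T_*])$ regularity to $a_i$. The identification $V(t) = 2\pi \sum_i d_i \dot a_i^\bot(t) \delta_{a_i(t)}$ follows from the continuity equation: in distributions $\dt \delta_{a_i(t)} = -\dot a_i(t) \cdot \nab \delta_{a_i(t)}$ while $\curl(w\, \delta_{a_i(t)}) = -w^\bot \cdot \nab \delta_{a_i(t)}$ for a constant vector $w$, so matching $\curl V = -\dt \mu$ term by term and concentrating $V$ on the vortex set (via item 2 of Proposition \ref{prod_est} tested with vector fields supported near a single trajectory) forces $w = \dot a_i^\bot$. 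The main obstacle is the rigidification step: the quantitative coupling of the $H^1$ continuity of $\langle \mu(\cdot),\xi\rangle$ with the $\Gamma$-convergence lower bound, exploiting the gap $\pi \inf b - C_0 > 0$ to exclude pair creation, splitting, and vortices wandering in from far away.
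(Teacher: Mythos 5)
Your overall strategy matches the paper's: Proposition \ref{prod_est} for item 1; Jacobian estimates plus the weighted $\Gamma$-convergence lower bound \eqref{gcv} to get the quantized structure of $\mu(t)$ and the energy budget; $H^1$ continuity of $t\mapsto\langle\mu(t),\xi\rangle$ to localize and preserve degrees; coordinate test functions $\psi_i(x)(x\cdot e)$ for $H^1$ regularity of the trajectories; and the continuity equation $\dt\mu+\curl V=0$ to recover $V$. So this is essentially the paper's proof.

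There is, however, one step that as written does not close. You claim that an excess atom or a degree $\ge 2$ ``would push $\sum_j |D_j|\,b(b_j)$ above $\sum_i b(a_i^0) + \inf b + o(1)$.'' This implicitly uses $\sum_i b(a_i(t)) \ge \sum_i b(a_i^0) - o(1)$, which is false in general: the original vortices can drift within the balls $B(a_i^0,r)$ toward smaller values of $b$, freeing up additional energy budget beyond $C_0$, so an extra atom of mass $\pi\inf b$ can fit without contradiction. The paper avoids this by coupling the ball radius to the gap: it fixes $\delta<\pi\inf b - C_0$, then uses the uniform continuity of $b$ to choose $\eta$ so that the oscillation of $b$ on $\cup_i B(a_i^0,\eta)$ is at most $\delta/(n\pi)$, and takes the localizing support radius to be $\gamma_1 = \min\{(\gamma_0-\gamma_*)/2,\eta\}$; then the slack in the energy budget is at most $C_0+\delta<\pi\inf b$, which is strictly below the cost of one extra vortex. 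Your radius $r<\gamma_0/3$ is chosen only for geometric separation and is not tied to the modulus of continuity of $b$, so the budget comparison fails. Your remark about ``using smaller radii around the current positions'' in the continuation step is the right instinct, but the $b$-oscillation control must be built into the initial choice of test-function support, not deferred. A secondary, smaller point: a cutoff supported away from $\cup_i B(a_i^0,r)$ only forces the \emph{net} degree outside the balls to vanish (pairs with cancelling signs are not detected by the test function); it is again the energy budget, not the degree count, that excludes such pairs, so you should state that the excess-atom energy argument is applied to atoms anywhere in $\Omega$, not just ``inside some $B(a_i^0,r)$.''
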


\begin{proof}
The bounds provided by Theorem \ref{time_bound} and Corollary \ref{energy_bound} allow us to apply Proposition \ref{prod_est} to deduce the first item.  The second item would be standard if $b=1$ since then the quantization of the energy would prevent the nucleation of new vortices.  However, when $b \neq 1$ the energy is not quantized and so we must look for another structure to prove the result. 

We first recall that by the third item of Proposition \ref{prod_est}, the mapping $t \mapsto \langle \psi,\mu(t) \rangle$ is $H^1$ (and hence continuous) for any fixed $\psi \in C_c^\infty(\Omega)$.   Fix $0 < \delta < \pi (\inf b)- C_0$.  By the smoothness of $b$, there exists $\eta>0$ so that 
\begin{equation}\label{v_p_1}
 \abs{b(x) - b(a_i(0)) }\le \delta/(n\pi) \text{ for all } x\in \cup_{i=1}^n B(a_i(0),\eta).
\end{equation}

Standard ``Jacobian estimates'' (see \cite{js_jacob} or Chapter 6 of \cite{ss_book}) combined with the energy upper bound and the lower bound on $b$, 
give the quantized structure of the measure $\mu(t)$, i.e.
\begin{equation}
 \mu(t) = 2\pi \sum_{i=1}^{n(t)} d_i(t) \delta_{a_i(t)} 
\end{equation}
for $\abs{d_i(t)} \ge 1$.  Moreover, the $\Gamma-$convergence lower bound \eqref{gcv}, when combined with \eqref{en_b_0}, shows that
\begin{equation}\label{v_p_2}
\pi \sum_{i=1}^{n(t)} \abs{d_i(t)} b(a_i(t)) \le  \pi \sum_{i=1}^{n}  b(a_i(0)) + C_0.
\end{equation}

Let $\gamma_1 = \min\{(\gamma_0-\gamma_*)/2,\eta\}$.  For $i=1,\dotsc,n$ let $\psi_i \in C_c^\infty(\Omega)$ be a function supported in $ B(a_i(0),\gamma_1)$ so that $\psi_i(x) = 1$ for  $x\in B(a_i(0),\gamma_1/2)$.  Since $t \mapsto \langle \psi_i,\mu(t) \rangle$ is continuous and $\langle \psi_i,\mu(0)\rangle = 2 \pi d_i(0)$, we see that for $t\le T_*$ with $T_*$ sufficiently small at least one of the points $a_j(t)$, $j=1,\dots,n(t)$ must be contained in the ball $B(a_i(0),\gamma_1/2)$.  Up to relabeling the indices, we may assume that $a_i(t) \in B(a_i(0),\gamma_1/2)$ for $i=1,\dotsc,n$. This implies that  $n(t) \ge n$ for $t\le T_*$.  Now suppose by way of contradiction that $n(t) > n$ for some $t< T_*$.  Since $a_i(t)\in B(a_i(0),\eta)$ for $i=1,\dotsc,n$, the bound \eqref{v_p_1} implies that
\begin{equation}
\pi \abs{\sum_{i=1}^n b(a_i(t)) - b(a_i(0))  }\le \delta.
\end{equation}
Plugging this into \eqref{v_p_2}, we deduce the  bound
\begin{equation}
 C_0 + \delta \ge \pi \sum_{i=n+1}^{n(t)} \abs{d_i(t)} b(a_i(t)) \ge (n(t) -n)\pi (\inf b).
\end{equation}
The choice of $\delta$ and $C_0$  then implies that $n(t) - n < 1$, hence  $n(t) = n$ for $0 \le t \le T_*$.  The structure of $V$ follows from the structure of $\mu$ and the equation $\dt \mu + \curl{V} =0$.

Returning to the continuity of $t \mapsto \langle \psi_i,\mu(t)\rangle$, we find that $d_i(t) = d_i(0)$ for $i=1,\dotsc,n$.  Since $a_i(t) \in B(a_i(0),\gamma_1/2)$, we know that   
\begin{equation}
\min \{\abs{a_i(t) - a_j(t)} \;\vert\; i\neq j\} \cup \{\dist(a_i(t),\partial \Omega)\} \ge \gamma_*. 
\end{equation}
Finally, to see that $a_i\in H^1([0,T_*];\Omega)$ we use the fact that the mapping $t \mapsto \langle \psi,\mu(t)\rangle$ is in $H^1$ for the functions 
\begin{equation}
 \psi(x) = (x\cdot e) \psi_i(x), i=1,\dots,n
\end{equation}
for any unit vector $e\in \Rn{2}$.
\end{proof}

\begin{remark}
Since  the degree of the $i^{th}$ vortex does not change for any time in $[0,T_*]$, we may consolidate notation and write only $d_i$ in place of $d_i(0)$.
\end{remark}

\subsection{Normalized energy density}

We can now show that up to the extraction of a \emph{single} subsequence, the modified energy density converges  \emph{for all} $t\in[0,T_*]$.

\begin{prop}\label{density_convergence}
There exists a subsequence so that
\begin{equation}\label{den_con_0}
 \frac{\tilde{g}_\ep(v_\ep,B_\ep)(t)}{\ale} \wstar \nu(t) 
\end{equation}
weakly-$*$ in $(C^1(\Omega))^*$ for all $t\in[0,T_*]$.  For each $t \in [0,T_*]$,  $\nu(t)$ is a measure, and $\nu \in L^\infty([0,T_*]; \mathcal{M} (\Omega))$, where $\mathcal{M}(\Omega)$ is the space of Radon measures on $\Omega$.  Finally, 
\begin{equation}\label{den_con_1}
 \frac{\tilde{g}_\ep(v_\ep,B_\ep)}{\ale}  \to \nu(t) dt 
\end{equation}
weakly in the sense of measures on $\Omega \times [0,T_*].$
\end{prop}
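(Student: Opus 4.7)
The first step is to show that $\tilde{g}_\ep(v_\ep,B_\ep)(t)/\ale$ is uniformly bounded in total variation on $\Omega$, for all $t\in[0,T_*]$ and all small $\ep$. This follows from Theorem \ref{time_bound} together with the well-preparedness condition \eqref{well_prepared_def}, plus the observation that $\tilde{g}_\ep-g_\ep=\hal b(1-|v_\ep|^2)f_\ep$ has $L^1(\Omega)$ norm of order $o(\ale)$ since $\|f_\ep\|_\infty\le C\ale^2$ and $\|1-|v_\ep|^2\|_2\le C\ep\sqrt{\ale}$ by Lemma \ref{bndry_conv}. Hence both $g_\ep(t)/\ale$ and $\tilde{g}_\ep(t)/\ale$ are uniformly bounded signed Radon measures on $\Omega$, and for each fixed $t$ the family $\{\tilde{g}_\ep(t)/\ale\}_\ep$ is weakly-$*$ pre-compact in $\mathcal{M}(\Omega)$, and \emph{a fortiori} in $(C^1(\Omega))^*$.

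\textbf{Equicontinuity in $t$ for pairings.} Fix $\phi\in C^1(\Omega)$ and set $A_\ep^\phi(t):=\langle\phi,\tilde{g}_\ep(t)/\ale\rangle$. Starting from \eqref{m_e_e_01} of Lemma \ref{mod_en_evolve}, integrating against $\phi$ over $\Omega$, and applying the divergence theorem with the boundary conditions \eqref{s_r_02}, one obtains
\[
\frac{d}{dt}A_\ep^\phi=-\frac{1}{\ale}\io\nab\phi\cdot b(\dt v_\ep,\nab_{B_\ep}v_\ep)+\frac{1}{\ale}\io\nab^\bot\phi\cdot h_\ep'\,\dt B_\ep-\frac{1}{\ale}\io\phi(\alpha b|\dt v_\ep|^2+\sigma|\dt B_\ep|^2)+\io\phi bZ\cdot V(v_\ep,B_\ep)+\mathcal{B}_\ep,
\]
where $Z_\ep=\ale Z$ has been used to absorb $1/\ale$ in the fourth term and $\mathcal{B}_\ep$ is a boundary contribution that can be handled as in the definition of $\tilde F_\ep$. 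Cauchy-Schwarz with the bounds of Theorem \ref{time_bound} shows that the first two terms integrate over any $[s_1,s_2]\subseteq[0,T_*]$ to $O(\|\phi\|_{C^1}\sqrt{s_2-s_1})$, uniformly in $\ep$. The dissipation term is bounded by $\|\phi\|_\infty\omega_\ep([s_1,s_2])$ for $\omega_\ep\in\mathcal{M}([0,T_*])$ a nonnegative measure of total mass at most $C_0$. For the forcing term, since $\phi bZ\in C^{0,1}(\Omega)$ is $t$-independent and $V(v_\ep,B_\ep)\to V$ in $(C^{0,1}(\Omega\times[0,T_*]))^*$ by Proposition \ref{prod_est}, one has $\int_{s_1}^{s_2}\io\phi bZ\cdot V(v_\ep,B_\ep)\to\int_{s_1}^{s_2}\io\phi bZ\cdot V$, and the product estimate (item 2 of Proposition \ref{prod_est}) bounds this limit by $C\|\phi\|_{C^1}\sqrt{s_2-s_1}$.

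\textbf{Diagonal extraction and identification.} Pick countable dense families $\{\phi_k\}\subset C^1(\Omega)$ and $\{t_j\}\subset[0,T_*]$. By Helly's theorem, a subsequence makes $\omega_\ep\to\omega$ weakly-$*$ on $[0,T_*]$; augment $\{t_j\}$ by the at-most-countable jump set of $t\mapsto\omega([0,t])$. A diagonal extraction then produces a single subsequence $\ep_n\to 0$ along which $A_{\ep_n}^{\phi_k}(t_j)$ converges for every $j,k$. The quasi-equicontinuity of the previous step---a uniform $\sqrt{\cdot}$ H\"older bound plus control by $\omega_{\ep_n}$, which is now continuous along the augmented set of times---extends this convergence to every $t\in[0,T_*]$ and every $\phi_k$. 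Density of $\{\phi_k\}$ in $C^1(\Omega)$ and the uniform total-variation bound then define $\nu(t)\in\mathcal{M}(\Omega)$ as the $(C^1)^*$ weak-$*$ limit of $\tilde{g}_{\ep_n}(t)/\ale$, with $\nu\in L^\infty([0,T_*];\mathcal{M}(\Omega))$. The space-time convergence $\tilde{g}_{\ep_n}/\ale\to\nu(t)\,dt$ then follows from the pointwise-in-$t$ convergence together with dominated convergence against the uniform bound.

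\textbf{Main obstacle.} The delicate step is the forcing term: a direct Cauchy-Schwarz estimate of $\ale^{-1}\io\phi bZ_\ep\cdot V(v_\ep,B_\ep)$, integrated in time, yields only $O(\|\phi\|_\infty\ale\sqrt{s_2-s_1})$, which blows up as $\ep\to 0$ and thus fails $\ep$-uniform equicontinuity at finite $\ep$. The resolution is to use the $(C^{0,1})^*$ compactness of $V(v_\ep,B_\ep)$ from Proposition \ref{prod_est} to pass to the limit $V$ \emph{before} estimating, and then apply the Sandier-Serfaty product estimate to $V$ itself, where the cancellations implicit in the space-time Jacobian deliver the required $\sqrt{\cdot}$ H\"older bound.
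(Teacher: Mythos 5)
Your Arzel\`a--Ascoli/diagonal-extraction strategy is the right framework (and matches what the paper's reference to Proposition~4.4 of \cite{tice_2} is invoking), and the treatment of the uniform bound, the divergence terms, and the dissipation measure is correct. The difficulty you flag in the forcing term is real and is the crux of the matter: after dividing by $\ale$, the contribution $\int_\Omega \phi\, b Z \cdot V(v_\ep,B_\ep)$ carries no residual $\ale^{-1}$, and the crude bound on its time-integral is $O(\ale\sqrt{s_2-s_1})$, which destroys $\ep$-uniform equicontinuity.

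The resolution you propose, however, does not close the gap. You assert that $\int_{s_1}^{s_2}\int_\Omega \phi b Z\cdot V(v_\ep,B_\ep)\to \int_{s_1}^{s_2}\int_\Omega\phi bZ\cdot V$ follows from the convergence of $V(v_\ep,B_\ep)$ in $(C^{0,1}(\Omega\times[0,T_*]))^*$; but the test function $\phi(x)b(x)Z(x)\chi_{[s_1,s_2]}(t)$ is discontinuous in time and so is not an admissible test function for that topology, and because the total variation $\int_0^{T_*}\int_\Omega|V(v_\ep,B_\ep)|$ is only $O(\ale)$ (not uniformly bounded), one cannot run the usual mollification-in-time argument to upgrade the convergence. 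More fundamentally, even if this limit were justified, applying the product estimate to $V$ only bounds the \emph{limit} of the increment of $A_\ep^\phi$; it does not produce a modulus of continuity that is uniform in $\ep$ at finite $\ep$, which is precisely what is needed to pass from a countable dense set of times $\{t_j\}$ to all $t\in[0,T_*]$. As written, your argument establishes convergence at the dense set but then appeals to an equicontinuity that you have not established, so the pointwise-in-time statement \eqref{den_con_0} is not proved.
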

\begin{proof}
The proof of \eqref{den_con_0} is the same as Proposition 4.4 of \cite{tice_2}.  The fact that $\nu(t)$ is a measure, the inclusion $\nu \in L^\infty([0,T_*];\mathcal{M}(\Omega))$, and the weak convergence \eqref{den_con_1} all follow easily from the energy bound of Corollary \ref{energy_bound} and weak compactness of measures in $\Omega$.
\end{proof}

Since we do not have precise control of the energy evolution yet, we cannot derive the exact structure of the measure $\nu$.  We can however, provide a lower bound that follows directly  from \eqref{gcv}.

\begin{lem}\label{nu_lower_bound}
 It holds that 
\begin{equation}
 \nu(t) \ge \frac{b}{2}  \abs{\mu(t)}.
\end{equation}
\end{lem}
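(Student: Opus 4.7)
Since $d_i = \pm 1$, we have $|\mu(t)| = 2\pi \sum_{i=1}^n \delta_{a_i(t)}$, so the claim is equivalent to $\nu(t) \ge \pi \sum_{i=1}^n b(a_i(t)) \delta_{a_i(t)}$. Because the $a_i(t)$ are mutually distinct by Lemma \ref{vortex_path}, this in turn reduces to showing that, for each $i$, $\nu(t)$ carries mass at least $\pi b(a_i(t))$ at the single point $a_i(t)$. I will establish this by testing $\nu(t)$ against a bump function concentrated near $a_i(t)$ and using a localized $\Gamma$-convergence lower bound.

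Fix $i$ and choose $r>0$ small enough that $B(a_i(t),r) \subset \Omega$ contains $a_i(t)$ but no other $a_j(t)$. Pick a non-negative cutoff $\chi \in C^1_c(\Omega)$ supported in $B(a_i(t),r)$ with $\chi \equiv 1$ on $B(a_i(t),r/2)$. Then Proposition \ref{density_convergence} gives
$$\langle \nu(t), \chi \rangle = \lim_{\ep \to 0} \frac{1}{\ale} \int_\Omega \chi \, \tilde{g}_\ep(v_\ep, B_\ep)(t).$$
The first step is to replace $\tilde{g}_\ep$ by $g_\ep$ on the right. The difference $\tilde{g}_\ep - g_\ep = \tfrac{1}{2}(1-|v_\ep|^2) b f_\ep$ is controlled by the fact that $\|f_\ep\|_{L^\infty} = O(\ale^2)$ (which follows from the explicit formula in Lemma \ref{specific_reformulation} together with the smoothness of $Z$, $\phib$ and $b$), while Corollary \ref{energy_bound} and the definition of $F_\ep$ force $\|1-|v_\ep|^2\|_{L^2} \le C\ep\sqrt{\ale}$. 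Hence $\int_\Omega \chi(\tilde{g}_\ep - g_\ep) = O(\ep \ale^{5/2}) = o(\ale)$, and the limit above also equals $\lim_{\ep\to 0} \ale^{-1} \int_\Omega \chi \, g_\ep(v_\ep, B_\ep)(t)$.

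Next, since the vorticity $\mu(v_\ep,B_\ep)(t)$ concentrates at the single point $a_i(t) \in B(a_i(t),r/2)$ with degree $d_i = \pm 1$, a localized version of the $\Gamma$-convergence lower bound underlying \eqref{gcv} (obtained, as indicated after \eqref{gcv}, by a routine adaptation of the arguments in \cite{ass}---concretely through a vortex-ball construction inside $B(a_i(t),r/2)$ combined with the pointwise inequality $b \ge \inf_{B(a_i(t),r/2)} b$) yields
$$\liminf_{\ep \to 0} \frac{1}{\ale} \int_{B(a_i(t), r/2)} g_\ep(v_\ep,B_\ep)(t) \ \ge\ \pi \inf_{B(a_i(t), r/2)} b.$$
Combining this with $\chi \ge \mathbf{1}_{B(a_i(t),r/2)}$ and the first step gives $\langle \nu(t), \chi \rangle \ge \pi \inf_{B(a_i(t),r/2)} b$. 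Sending $r \to 0$ and using continuity of $b$ together with outer regularity of $\nu(t)$, I conclude $\nu(t)(\{a_i(t)\}) \ge \pi b(a_i(t))$. Summing over the disjoint points $a_i(t)$ gives $\nu(t) \ge \pi \sum_i b(a_i(t)) \delta_{a_i(t)} = \tfrac{b}{2}|\mu(t)|$.

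The main conceptual obstacle is justifying the localized $\Gamma$-convergence lower bound: with a non-constant pinning weight the total energy is not quantized, so one cannot simply subtract a global lower bound from the total to get a local one. What saves the argument is that $\inf b > 0$, so on a small ball the variation of $b$ is a lower-order perturbation and the standard vortex-ball construction still produces a lower bound of $\pi (\inf_{B} b)\ale$ per unit of degree; sending $r \to 0$ then recovers the sharp point value $\pi b(a_i(t))$.
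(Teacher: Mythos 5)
Your proof is correct and carries out precisely the argument the paper only gestures at. The paper states the lemma ``follows directly from \eqref{gcv}'' and gives no proof at all (and \eqref{gcv} itself is declared to be a standard result not proven in the paper, obtained by adapting \cite{ass}), so the only thing to check is whether your filling-in of the missing steps is sound. It is: the observation that $|\mu(t)| = 2\pi \sum_i \delta_{a_i(t)}$ (using $|d_i|=1$), the reduction of $\tilde g_\ep$ to $g_\ep$ via $\|f_\ep\|_\infty = O(\ale^2)$ and $\|1-|v_\ep|^2\|_{L^2} = O(\ep\sqrt{\ale})$, and the localized lower bound $\liminf \ale^{-1}\int_{B(a_i,r/2)} g_\ep \ge \pi\inf_{B(a_i,r/2)} b$ obtained by a vortex-ball construction (using $b\ge\inf_B b$ to reduce to the unweighted density at a rescaled $\ep$) followed by $r\to 0$ are all correct. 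You are also right to flag that the localization is the genuine technical content here — a global lower bound like \eqref{gcv} combined with the global upper bound would suffice when $b$ is constant (by quantization), but with nonconstant $b$ one needs the ball-by-ball lower bound directly, which is exactly what the vortex-ball construction supplies. This is the paper's intended route; you have simply written it out.
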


\subsection{Stress-energy tensor, etc}

We define the stress-energy tensor associated to $(v_\ep,B_\ep)$ by 
\begin{multline}\label{T_def}
 T_\ep  = b \nab_{B_\ep} v_\ep \otimes \nab_{B_\ep} v_\ep 
- I_{2\times 2} \left( \frac{b}{2} \abs{\nab_{B_\ep} v_\ep }^2 + \frac{b^2}{4\ep^2}(1-\abs{v_\ep}^2)^2   - \hal \abs{\curl{B_\ep}}^2 \right) \\
+ I_{2\times 2} \left(\frac{(\abs{v_\ep}^2-1)}{2}(b f_\ep ) \right) .
\end{multline}
Note that 
\begin{equation}\label{Tadd}
T_\ep=  b \nab_{B_\ep} v_\ep \otimes \nab_{B_\ep} v_\ep  - I_{2\times 2}\(\tilde{g}_\ep(v_\ep, B_\ep)  - \abs{\curl{B_\ep}}^2 \).\end{equation}

The next result provides for the convergence of the (normalized) stress-energy tensor along with a couple other quantities.

\begin{lem}\label{measure_converge}
Let $T_\ep$ be given by \eqref{T_def}.  Then up to the extraction of a subsequence, as $\ep\to 0$ we have
\begin{equation}
 \frac{T_\ep}{\ale} \rightarrow T,
\end{equation}
\begin{equation}
 \frac{b (\dt v_\ep,\nab_{B_\ep} v_\ep) }{\ale} \rightarrow p,
\end{equation}
and 
\begin{equation}
\alpha \frac{b \abs{\dt v_\ep}^2}{\ale} + \sigma \frac{\abs{\dt B_\ep}^2}{\ale} \rightarrow \zeta
\end{equation}
in the weak sense of measures on $\Omega \times [0,T_*]$.  
\end{lem}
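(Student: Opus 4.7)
The plan is to show that each of the three quantities is uniformly bounded in total variation (when divided by $\ale$) on $\Omega\times[0,T_*]$ and then invoke weak-$*$ compactness of bounded Radon measures, extracting a single subsequence by a standard diagonal argument. The energy bounds supplied by Theorem \ref{time_bound} and Corollary \ref{energy_bound} will provide all the integrability we need.

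For the dissipation $\zeta_\ep := \alpha b\abs{\dt v_\ep}^2/\ale + \sigma\abs{\dt B_\ep}^2/\ale$, nonnegativity and the bound
\begin{equation}
\int_0^{T_*}\!\!\int_\Omega \zeta_\ep \;\le\; \frac{1}{\ale}\int_0^{T_*}\!\!\int_\Omega \alpha b\abs{\dt v_\ep}^2 + \sigma\abs{\dt B_\ep}^2 \;\le\; C
\end{equation}
from Theorem \ref{time_bound} give a uniform total variation bound. For $p_\ep := b(\dt v_\ep,\nab_{B_\ep} v_\ep)/\ale$, I would use Cauchy--Schwarz in space-time:
\begin{equation}
\int_0^{T_*}\!\!\int_\Omega \abs{p_\ep} \le \frac{1}{\ale}\left(\int_0^{T_*}\!\!\int_\Omega b\abs{\dt v_\ep}^2\right)^{1/2}\!\!\left(\int_0^{T_*}\!\!\int_\Omega b\abs{\nab_{B_\ep}v_\ep}^2\right)^{1/2}\!\!\le \frac{1}{\ale}(C\ale)^{1/2}(CT_*\ale)^{1/2}\le C,
\end{equation}
again using Theorem \ref{time_bound} and Corollary \ref{energy_bound}.

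For the stress-energy tensor, formula \eqref{Tadd} rewrites it as $T_\ep = b\,\nab_{B_\ep}v_\ep\otimes\nab_{B_\ep}v_\ep - I_{2\times 2}(\tilde g_\ep(v_\ep,B_\ep)-\abs{\curl B_\ep}^2)$. Each entry is pointwise bounded (in absolute value) by $C(\tilde g_\ep(v_\ep,B_\ep) + \abs{\curl B_\ep}^2)$ up to the $f_\ep$ term that enters $\tilde g_\ep$. By Corollary \ref{energy_bound} and Lemma \ref{energy_comparison},
\begin{equation}
\sup_{t\in[0,T_*]}\int_\Omega \frac{\abs{T_\ep(t)}}{\ale} \le C\sup_{t}\frac{F_\ep(v_\ep,B_\ep)(t)}{\ale} + C\sup_t\frac{1}{\ale}\int_\Omega\abs{(1-\abs{v_\ep}^2) b f_\ep},
\end{equation}
and the second summand is $o(1)$: by Cauchy-Schwarz and $\pnorm{1-\abs{v_\ep}^2}{2}\le C\ep\sqrt{F_\ep}\le C\ep\sqrt{\ale}$ together with $\norm{f_\ep}_\infty\le C\ale^2$, it is $O(\ep\ale^{3/2})$. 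Hence $T_\ep/\ale$ is uniformly bounded in $L^\infty_t L^1_x$, thus in the total-variation norm on $\Omega\times[0,T_*]$.

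Finally, weak-$*$ compactness of the unit ball in the space of Radon measures on the compact set $\overline\Omega\times[0,T_*]$, applied componentwise, yields convergent subsequences for each of $T_\ep/\ale$, $p_\ep$, and $\zeta_\ep$; relabeling by a diagonal extraction produces a single subsequence along which all three converge, to limits denoted $T$, $p$, $\zeta$ respectively, in the weak sense of measures. I expect no serious obstacle here: the only point that requires any real care is verifying that the $f_\ep$-correction term in $T_\ep$ is negligible at the $\ale$ scale, which is handled by the elementary estimates above using $\norm{f_\ep}_{C^1}\le C\ale^2$ and the $L^2$ control on $1-\abs{v_\ep}^2$ obtained in the proof of Lemma \ref{bndry_conv}.
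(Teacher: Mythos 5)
Your proposal is correct and follows essentially the same route as the paper: use the energy bounds from Theorem \ref{time_bound} and Corollary \ref{energy_bound} together with Cauchy's inequality to obtain uniform total-variation bounds on each of the three quantities normalized by $\ale$, then invoke weak-$*$ compactness of Radon measures on $\Omega\times[0,T_*]$. Your explicit treatment of the $f_\ep$-correction term (using $\norm{f_\ep}_\infty = O(\ale^2)$ and $\pnorm{1-\abs{v_\ep}^2}{2}\le C\ep\sqrt{\ale}$ to show it is $O(\ep\ale^{3/2})$) is a sound elaboration of the paper's terse remark that ``all the terms defining $T_\ep$ are seen to be controlled by the energy density.''
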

\begin{proof}
All the terms defining $T_\ep$ are seen to be controlled by the energy density $\tilde{g}_\ep$. Therefore,
Cauchy's inequality and the energy bounds of Theorem \ref{time_bound} and Corollary \ref{energy_bound} show that
\begin{equation}
 \int_0^{T_*} \int_\Omega \frac{\abs{T_\ep}}{\ale} +\frac{b (\dt v_\ep,\nab_{B_\ep} v_\ep) }{\ale}  +\alpha \frac{b \abs{\dt v_\ep}^2}{\ale} + \sigma \frac{\abs{\dt B_\ep}^2}{\ale} \le C .
\end{equation}
Compactness in the sense of measures follows directly from these bounds.
\end{proof}

If we knew more information about the evolution of the modified energy $\tilde{F}_\ep$,  we could say more about the structure of the limiting measures $T,$ $p,$ and $\zeta$.  Instead we will resort to a Lebesgue decomposition of all the measures with respect to the vorticity measure $\mu$, which we recall is equal to $2\pi \sum_{i=1}^n d_i \delta_{a_i(t)}dt$.

\begin{lem}\label{lebesgue_decomp}
We can decompose
\begin{equation}\label{ld_02}
 \nu  = \nu_0(t) dt + \sum_{i=1}^n \nu_i(t) \delta_{a_i(t)} dt, \;
 T = T_0  + \sum_{i=1}^n T_i(t) \delta_{a_i(t)} dt,
\end{equation}
\begin{equation}\label{ld_03}
\zeta  = \zeta_0  + \sum_{i=1}^n \zeta_i(t) \delta_{a_i(t)} dt, \text{ and } p  = p_0  + \sum_{i=1}^n p_i(t) \delta_{a_i(t)} dt,
\end{equation}
where $T_0$, $\zeta_0$, and $\abs{p_0}$ are mutually singular with $\mu$ as measures on $\Omega \times [0,T_*]$ and $\nu_0(t)$ is mutually singular with $\mu(t)$ as measures on $\Omega$ for a.e. $t\in [0,T_*]$.  We have
\begin{equation}\label{ld_04}
  \nu_0  \in L^\infty([0,T_*];\mathcal{M}(\Omega)), 
\end{equation}
where $\mathcal{M}(\Omega)$ is the space of Radon measures on $\Omega$.   For $i=1,\dotsc,n$ we also have that 
\begin{equation}\label{ld_06}
 \nu_i \in L^\infty([0,T_*]; \Rn{}).
\end{equation}
Finally, we have that $\nu_0, \zeta_0, \zeta_i(t) \ge 0$, and the quantities $\nu_i(t)$ obey the bounds $\pi b(a_i(t)) \le \nu_i(t) \le \pi n \pnorm{b}{\infty} +C_0$ for $i=1,\dotsc,n$, where $C_0>0$ is the constant chosen at the beginning of Section \ref{sec5}.
\end{lem}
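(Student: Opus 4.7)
The plan is to perform a Lebesgue decomposition of each of $\nu, T, \zeta, p$ on $\Omega \times [0,T_*]$ against the total variation $|\mu| = 2\pi \sum_{i=1}^n |d_i|\delta_{(a_i(t),t)} dt$, whose support, by Lemma \ref{vortex_path}, is the disjoint union of the $H^1$ (hence continuous) space-time graphs $\Gamma_i := \{(a_i(t),t) : t \in [0,T_*]\}$. For each $\lambda \in \{\nu,T,\zeta,p\}$, I would write $\lambda = \lambda_0 + \lambda_{\mathrm{ac}}$ with $\lambda_0 \perp |\mu|$ and $\lambda_{\mathrm{ac}} \ll |\mu|$; then $\lambda_{\mathrm{ac}}$ is supported on $\bigcup_i \Gamma_i$, and since $|\mu|\vert_{\Gamma_i}$ is the pushforward of $2\pi|d_i|\,dt$ under $t \mapsto (a_i(t),t)$, the Radon--Nikodym theorem supplies (scalar-, vector-, or tensor-valued) functions $\alpha_i \in L^1([0,T_*])$ with $\lambda_{\mathrm{ac}}\vert_{\Gamma_i} = \alpha_i(t) \delta_{a_i(t)} dt$. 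Summing over $i$ produces the decompositions \eqref{ld_02}--\eqref{ld_03}.

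Next I would verify the positivity statements. Since $\zeta$ is the weak limit of the nonnegative densities $(\alpha b|\dt v_\ep|^2 + \sigma|\dt B_\ep|^2)/\ale$, it is a nonnegative measure, whence $\zeta_0 \ge 0$ and $\zeta_i(t) \ge 0$. For $\nu$, $\tilde g_\ep = g_\ep + \tfrac{b(1-|v_\ep|^2)}{2} f_\ep$ with $g_\ep \ge 0$, so it suffices to show the extra term has vanishing contribution. Corollary \ref{energy_bound} yields $\io b^2 (1-|v_\ep|^2)^2 \le C\ep^2 \ale$, while Lemma \ref{specific_reformulation} together with the smoothness of $b, Z, \phib$ gives $\pnorm{f_\ep}{\infty} \le C\ale^2$. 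Cauchy--Schwarz then yields, for any $\psi \in C^0(\Omega \times [0,T_*])$,
\begin{equation*}
\left|\int_0^{T_*} \io \psi \,\frac{b(1-|v_\ep|^2) f_\ep}{2\ale}\right| \le C T_* \pnorm{\psi}{\infty} \,\ep\, \ale^{3/2} \longrightarrow 0,
\end{equation*}
so $\nu \ge 0$, and in particular $\nu_0 \ge 0$ and $\nu_i(t) \ge 0$.

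For the bounds on $\nu_i(t)$, fix $t\in[0,T_*]$ and $0 < r < \gamma_*/2$; by Lemma \ref{vortex_path} the closed ball $\overline{B}(a_i(t),r)$ contains the $i$-th vortex and no others. Applying the local $\Gamma$-convergence lower bound \eqref{gcv} on this ball, with a single vortex of degree $d_i = \pm 1$ located at $a_i(t)$, gives $\nu(t)(\overline{B}(a_i(t),r)) \ge \pi b(a_i(t))$; letting $r \to 0$ yields $\nu_i(t) \ge \pi b(a_i(t))$. For the upper bound, Theorem \ref{time_bound} combined with the well-preparedness \eqref{well_prepared_def} gives
\begin{equation*}
\nu(t)(\Omega) \le \limsup_{\ep\to 0}\frac{\tilde F_\ep(v_\ep,B_\ep)(t)}{\ale} \le \pi \sum_{j=1}^n b(a_j(0)) + C_0 \le \pi n \pnorm{b}{\infty} + C_0,
\end{equation*}
and since $\nu \ge 0$ we obtain $\nu_i(t) \le \nu(t)(\Omega) \le \pi n \pnorm{b}{\infty} + C_0$. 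The $L^\infty$ claims \eqref{ld_04}--\eqref{ld_06} then follow by combining this pointwise-in-$t$ estimate with Proposition \ref{density_convergence}, and the bound $\nu_0(t)(\Omega) \le \nu(t)(\Omega)$.

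The main technical point I expect to be delicate is identifying $\lambda_{\mathrm{ac}}\vert_{\Gamma_i}$ with a measure of the specific product form $\alpha_i(t) \delta_{a_i(t)} dt$: one needs to disintegrate $|\mu|\vert_{\Gamma_i}$ along the time direction, which is made possible by the continuity (via $H^1 \hookrightarrow C^0$) and the injectivity of the parameterization $t \mapsto (a_i(t),t)$, which together make $\Gamma_i$ homeomorphic to $[0,T_*]$ and thus allow the Radon--Nikodym derivative to be pulled back to a genuine scalar (respectively vector, tensor) function on $[0,T_*]$. The positivity of $\nu$ is the secondary mild technical point, handled by the Cauchy--Schwarz computation above once the bound $\pnorm{f_\ep}{\infty} \le C \ale^2$ and the energy control on $1-|v_\ep|^2$ are in place.
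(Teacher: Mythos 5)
Your proof follows essentially the same route as the paper: Lebesgue decomposition against $\mu$, further disintegration of the absolutely continuous part along the space-time graphs $\Gamma_i$ (using that $a_i \in H^1 \hookrightarrow C^{0,1/2}$), nonnegativity of $\zeta$ as a limit of nonnegative densities, the $\Gamma$-convergence lower bound \eqref{gcv} (packaged in the paper as Lemma \ref{nu_lower_bound}) for $\nu_i \ge \pi b(a_i)$, and the energy upper bound plus well-preparedness for $\nu_i \le \pi n \|b\|_{L^\infty} + C_0$. Where the paper simply says the positivity bounds are ``trivial'' and cites Proposition \ref{density_convergence} for \eqref{ld_04}--\eqref{ld_06}, you flesh out the argument that $\nu \ge 0$ by explicitly controlling the non-sign-definite term $\tfrac{b(1-|v_\ep|^2)}{2}f_\ep$ via the bound $\|f_\ep\|_{\infty} \le C\ale^2$ and the $\ep$-smallness of $\|1-|v_\ep|^2\|_{L^2}$ — this is a worthwhile elaboration (the same cancellation is what powers Lemma \ref{energy_comparison}), and you correctly note that $\nu \ge 0$ is needed before $\nu_i(t) \le \nu(t)(\Omega)$ can be invoked in the upper-bound step. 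Overall the argument is sound and matches the paper's structure.
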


\begin{proof}
We perform a Lebesgue decomposition of the measures $\nu$, $T$, $\zeta$, and $p$ with respect to $\mu$ in order to write 
\begin{equation}\label{ld_1}
 \nu = \nu_0 + \nu_\mu,\; T = T_0 + T_\mu,\; \zeta = \zeta_0 + \zeta_\mu, \text{ and }p = p_0 + p_\mu, 
\end{equation}
where $\nu_0,$ $\abs{T_0},$ $\zeta_0,$ and $\abs{p_0}$ are mutually singular with $\mu$ as measures on $\Omega \times [0,T_*]$ and $\nu_\mu$, $T_\mu$, $\zeta_\mu$, and $p_\mu$ are absolutely continuous with respect to $\mu$.  According to Lemma \ref{vortex_path}, we may write $\mu = 2\pi \sum_{i=1}^n d_i \delta_{a_i(t)} dt$, so that $\mu$ is supported along  the curves $\{(a_i(t),t) \;\vert\; t \in [0,T_*]\}$, which are parameterized by $H^1$ functions.  This allows us to further decompose
\begin{equation}\label{ld_2}
 \nu_\mu = \sum_{i=1}^n \nu_i(t) \delta_{a_i(t)} dt, \; T_\mu = \sum_{i=1}^n T_i(t) \delta_{a_i(t)} dt,
\end{equation}
and
\begin{equation}\label{ld_3}
 \zeta_\mu = \sum_{i=1}^n \zeta_i(t) \delta_{a_i(t)} dt, \; p_\mu = \sum_{i=1}^n p_i(t) \delta_{a_i(t)} dt,
\end{equation}
where  $\nu_i$, $T_i$, $\zeta_i$, $p_i$ are the Radon-Nikodym derivatives of $\nu_\mu$, $T_\mu$, $\zeta_\mu$, $p_\mu$ along the curve $\{(a_i(t),t) \;\vert\; t \in [0,T]\}$.  The decompositions \eqref{ld_1} and \eqref{ld_3} imply \eqref{ld_03} and the $T$ decomposition in \eqref{ld_02}.

To finish the $\nu$ decomposition in \eqref{ld_02} we must extract more structure from $\nu_0$.  Proposition \ref{density_convergence}  implies  $\nu = \nu(t) dt$, where the mapping $t \mapsto \nu(t)$  is in $L^\infty([0,T_*];\mathcal{M}(\Omega))$.  Using this and the above decomposition, we know that $\nu_0  \in L^\infty([0,T_*];\mathcal{M}(\Omega))$ and $\nu_i  \in L^\infty([0,T_*];\Rn{})$ for $i=1,\dotsc,n$.  This implies the decomposition of $\nu$ in \eqref{ld_02} as well as the $\nu$ inclusions in \eqref{ld_04} and \eqref{ld_06}.

The bounds  $\nu_0(t), \zeta_0(t), \zeta_i(t) \ge 0$ are trivial.  The lower bounds $\nu_i(t) \ge \pi b(a_i(t))$ for $i=1,\dotsc,n$ follow from Lemma \ref{nu_lower_bound}.  The upper bounds $\nu_i(t) \le \pi n \pnorm{b}{\infty} + C_0$ for $i=1,\dotsc,n$ follow from the energy upper bounds of Corollary \ref{energy_bound} and the well-preparedness assumption \eqref{well_prepared_def}.  
\end{proof}

We now prove some estimates of $\abs{p_0}$, $\abs{T_0}$, and $\abs{\sigma \curl{B_*} \dt B_*^\bot}$ in terms of $\zeta_0$ and $\nu_0$.

\begin{lem}\label{decomp_estimates}
 It holds that 
\begin{equation}\label{dece_01}
 \abs{T_0} \le C \nu_0(t)dt.
\end{equation}
Let $\eta \in C^0([0,T_*];\Rn{})$ satisfy $\eta(t) > 0$ for all $t \in [0,T_*]$.  Then we may estimate
\begin{equation}\label{dece_02}
 \abs{p_0} \le \eta \frac{\zeta_0}{2 \alpha} + \frac{\nu_0(t) dt}{\eta}
\end{equation}
as well as
\begin{equation}\label{dece_03}
 \abs{\sigma \curl{B_*} \dt B_*^\bot} \le \eta \frac{\zeta_0}{2} + \frac{\sigma \nu_0(t) dt}{ \eta}.
\end{equation}
\end{lem}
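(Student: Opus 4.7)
The plan for all three bounds is uniform: derive a pointwise inequality for the $\ep$-level quantities, divide by $\ale$, pass to the weak limit of measures on $\Omega \times [0,T_*]$, and then separately extract the components singular with respect to $\mu = 2\pi \sum_i d_i \delta_{a_i(t)}\,dt$. The final step rests on the fact that if $\tau_1 \le \tau_2 + \tau_3$ as positive Radon measures, then the corresponding $\mu$-singular parts satisfy $\tau_1^s \le \tau_2^s + \tau_3^s$: each $\tau_i^s$ is supported on some common $\mu$-null Borel set, and restricting the inequality to that set annihilates the absolutely continuous pieces.

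For \eqref{dece_01}, the representation \eqref{Tadd} shows that every entry of $T_\ep$ is pointwise controlled by a universal multiple of the unweighted energy density $g_\ep(v_\ep,B_\ep)$, so $\abs{T_\ep} \le C g_\ep$. The quotients $g_\ep/\ale$ and $\tilde{g}_\ep/\ale$ share the same weak limit $\nu$, because their pointwise difference $(1-\abs{v_\ep}^2)b f_\ep/2$ vanishes in $L^1(\Omega \times [0,T_*])$ as $\ep \to 0$ by Lemma \ref{bndry_conv}, the bound $\norm{f_\ep}_{\infty} = O(\ale^2)$, and the a priori energy estimate. Combining with $T_\ep/\ale \to T$ from Lemma \ref{measure_converge} and lower semicontinuity of total variation under weak convergence produces $\abs{T} \le C \nu$ as measures, and extracting $\mu$-singular parts via Lemma \ref{lebesgue_decomp} gives $\abs{T_0} \le C \nu_0(t)\,dt$.

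For \eqref{dece_02}, apply pointwise Young's inequality with weight $\eta(t)$, which is bounded above and below on $[0,T_*]$ by continuity and positivity:
\[\abs{b(\dt v_\ep, \nab_{B_\ep} v_\ep)} \le \frac{\eta(t)}{2} b\abs{\dt v_\ep}^2 + \frac{1}{2\eta(t)} b\abs{\nab_{B_\ep} v_\ep}^2.\]
After dividing by $\ale$, the first term equals $\eta/(2\alpha)$ times the piece $\alpha b\abs{\dt v_\ep}^2/\ale$ of $\zeta_\ep$, while the second is bounded by $(2 g_\ep/\ale)/(2\eta)$. Passing to the weak limit---using total-variation lower semicontinuity for $b(\dt v_\ep, \nab_{B_\ep} v_\ep)/\ale \to p$, and noting that $\sigma\abs{\dt B_\ep}^2/\ale \ge 0$ forces the weak-measure limit of $\alpha b\abs{\dt v_\ep}^2/\ale$ to be dominated by $\zeta$---yields $\abs{p} \le \eta \zeta/(2\alpha) + \nu/\eta$ as measures, and singular-part extraction then gives \eqref{dece_02}.

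For \eqref{dece_03}, work directly in the limit. By Proposition \ref{B_limits}, $\curl B_\ep/\sqrt{\ale} \to \curl B_*$ strongly in $L^2$ while $\dt B_\ep/\sqrt{\ale} \to \dt B_*$ weakly in $L^2$, so $\sigma \curl B_\ep \dt B_\ep^\bot/\ale$ converges to $\sigma \curl B_* \dt B_*^\bot$ weakly in $L^1$; in particular the limit is an $L^1$ function of $(x,t)$. Pointwise Young's with weight $\eta$ applied to this limit gives
\[\abs{\sigma \curl B_* \dt B_*^\bot} \le \frac{\sigma \abs{\curl B_*}^2}{2\eta} + \frac{\eta \sigma \abs{\dt B_*}^2}{2}.\]
From $\abs{\curl B_\ep}^2 \le 2 g_\ep$ and strong $L^2$ convergence, $\sigma \abs{\curl B_*}^2 \le 2\sigma \nu$; from $\sigma \abs{\dt B_\ep}^2/\ale \le \zeta_\ep$ and weak-$L^2$ lower semicontinuity, $\sigma \abs{\dt B_*}^2 \le \zeta$. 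Together, $\abs{\sigma \curl B_* \dt B_*^\bot} \le \sigma \nu/\eta + \eta \zeta/2$; since the left-hand side is Lebesgue-absolutely continuous it is $\mu$-singular, so dropping the $\mu$-absolutely continuous pieces on the right yields \eqref{dece_03}. The main delicate point throughout is the justification of measure inequalities in the limit, which requires careful use of lower semicontinuity of total variation and careful tracking of strong versus weak convergences in the products.
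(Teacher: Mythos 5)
Your proof is correct and follows essentially the same strategy as the paper's: pointwise/a.e.~inequalities for the $\ep$-level quantities, passage to the weak limit of measures, and extraction of the $\mu$-singular parts via Lemma~\ref{lebesgue_decomp}. Your treatment of \eqref{dece_03} applies Young's inequality directly to the limit quantities $\curl B_*$ and $\dt B_*$ rather than to the $\ep$-level product, and you carry the $g_\ep$ vs.\ $\tilde{g}_\ep$ comparison explicitly (both of which are slightly more careful than the paper's somewhat terse write-up), but these are stylistic refinements rather than a different argument.
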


\begin{proof}
From \eqref{Tadd} we see that $\abs{T_\ep}/\ale  \le C  \tilde{g}_\ep(v_\ep,B_\ep)/\ale$, and passing to the limit reveals that $\abs{T} \le C \nu$.  The decomposition of $T$  provided by Lemma \ref{lebesgue_decomp} implies that $\abs{T} = \abs{T_0} + \sum_{i=1}^n \abs{T_i(t)} \delta_{a_i(t)}dt$, with $\abs{T_0}$ mutually singular with  $\mu$, and hence also with   $\sum_{i=1}^n \abs{T_i(t)}\delta_{a_i(t)}dt$ and $\sum_{i=1}^n \nu_i(t) \delta_{a_i(t)}dt$.  The estimate  \eqref{dece_01} then follows.

Now let $\eta \in C^0([0,T_*];\Rn{})$ satisfy $\eta(t) > 0$.  The Cauchy-Schwarz inequality allows us to bound
\begin{equation}
 \frac{ \abs{  b (\dt v_\ep,\nab_{B_\ep} v_\ep) }}{\ale} \le  \frac{\eta}{2} \frac{b \abs{\dt v_\ep}^2}{ \ale} + \frac{1}{\eta} \frac{b\abs{\nab_{B_\ep}v_\ep}^2}{2\ale}.
\end{equation}
Passing to the limit, we find that 
\begin{equation}
 \abs{p} \le \frac{\eta \zeta}{2\alpha} + \frac{\nu}{\eta}.
\end{equation}
The bound \eqref{dece_02} follows from this, the decompositions of $\zeta$ and $\nu$ provided by Lemma \ref{lebesgue_decomp}, and the fact that $\abs{p} = \abs{p_0} + \sum_{i=1}^n \abs{p_i} \delta_{a_i }$ with $\abs{p_0}$ mutually singular with  $\sum_{i=1}^n \zeta_i \delta_{a_i}$ and $\sum_{i=1}^n \nu_i  \delta_{a_i}$. 

Similarly, we may bound
\begin{equation}
\frac{ \abs{\sigma \curl{B_\ep} \dt B_\ep^\bot} }{\ale} \le \frac{\eta}{2} \frac{\sigma \abs{\dt B_\ep}^2}{\ale} + \frac{\sigma}{\eta} \frac{\abs{\curl{B_\ep}}^2}{2\ale},
\end{equation}
which implies, upon passing to the limit, that as measures
\begin{equation}
  \abs{\sigma \curl{B_*} \dt B_*^\bot} \le \frac{\eta \zeta}{2} + \frac{\sigma \nu}{\eta}. 
\end{equation}
Then \eqref{dece_03} follows from the decompositions of Lemma \ref{lebesgue_decomp} and \eqref{b_l_04} of Proposition \ref{B_limits}, which implies that $\abs{\sigma \curl{B_*} \dt B_*^\bot}$ is mutually singular with $\mu$.
\end{proof}

We now compute the divergence of $T_\ep$  and relate its limit as $\ep \rightarrow 0$ to the Lebesgue decomposition of $p,\nu, T$.

\begin{lem}\label{tensor_diverge}
 The stress-energy tensor satisfies
\begin{multline}\label{t_d_01}
 \diverge{T_\ep} = \alpha b(\dt v_\ep,\nab_{B_\ep} v_\ep) - \sigma \curl B_\ep\,  \dt B_\ep^\bot - \frac{\beta \ale b}{2} V(v_\ep,B_\ep)  -  \mu(v_\ep,B_\ep) b Z_\ep^\bot   \\
 -\left(\tilde{g}_\ep(v_\ep,B_\ep) - \frac{\text{tr}(T_\ep)}{2} \right) \nab \log{b} 
+ \frac{(\abs{v_\ep}^2-1)}{2} \left[ \beta \ale  b \dt B_\ep  +  b \nab f_\ep   - f_\ep \nab b \right].
\end{multline}
Dividing by $\ale$ and passing to the limit $\ep \rightarrow 0$, we have that
\begin{equation}\label{t_d_02}
 \diverge{T} = \alpha p - \frac{\beta b}{2} V - \mu b Z^\bot - \left(\nu -\frac{\text{tr}(T)}{2}\right) \nab \log{b} - \sigma \curl{B_*} \dt B_*^\bot,
\end{equation}
where $B_*$ is the vector field given by Proposition \ref{B_limits}.  We have the decomposition 
\begin{equation}\label{t_d_05}
 \diverge{T}  = S_0  + \sum_{i=1}^n S_i(t) \delta_{a_i(t)} dt,
\end{equation}
where   
\begin{equation}\label{t_d_03}
 S_0 = \alpha p_0 -\left( \nu_0 - \frac{\text{tr}(T_0)}{2} \right) \nab \log{b}  - \sigma \curl{B_*} \, \dt B_*^\bot
\end{equation}
with $\abs{S_0}$  mutually singular with $\mu$, and
\begin{equation}\label{t_d_04}
 S_i =   \alpha p_i -  \beta  \pi d_i b(a_i) \dot{a}_i^\bot - 2\pi d_i b(a_i) Z^\bot(a_i) - \left(\nu_i - \frac{\text{tr}(T_i)}{2} \right)\nab \log{b}(a_i).
\end{equation}
\end{lem}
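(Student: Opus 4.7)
The proof splits into three natural stages: a direct computation of $\diverge T_\ep$ from the definition \eqref{T_def}, passage to the $\ep \to 0$ limit, and a Lebesgue decomposition of the limiting identity.

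The first stage parallels the scalar calculation \eqref{divt}--\eqref{divvvtt} of Section \ref{glsimple}, with three magnetic adjustments. Differentiating the rank-two piece $b\, \nab_{B_\ep}v_\ep \otimes \nab_{B_\ep}v_\ep$ uses the commutator identity for the covariant derivative, which produces a curvature contribution proportional to $\curl B_\ep$; this curvature contribution, combined with $\nab(\hal\abs{\curl B_\ep}^2)$ from the diagonal piece and the substitution of $\nab^\bot\curl B_\ep$ from \eqref{s_r_01}, collapses to the magnetic-dissipation term $-\sigma\curl B_\ep\,\dt B_\ep^\bot$ and the Lorentz factor $-\mu(v_\ep,B_\ep)\,b\,Z_\ep^\bot$, modulo an $(\abs{v_\ep}^2-1)$-remainder. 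Substituting $\Delta_{B_\ep}v_\ep$ from \eqref{s_r_00} produces the driving pairing $(\a+i\ale\beta)(\dt v_\ep, \nab_{B_\ep}v_\ep)$, which the identity \eqref{m_e_e_1} splits into $\a b(\dt v_\ep, \nab_{B_\ep}v_\ep) - \hal\beta\ale b V(v_\ep,B_\ep)$ plus another $(\abs{v_\ep}^2-1)\dt B_\ep$-remainder. All the $\nab b$ contributions from the product rule, combined with the $\nab\log b \cdot \nab_{B_\ep}v_\ep$ forcing in \eqref{s_r_00}, collapse after writing $\text{tr}(T_\ep)$ explicitly into the single factor $-(\tilde g_\ep - \text{tr}(T_\ep)/2)\nab \log b$; the residual $v_\ep f_\ep$ forcing produces the bracketed $(\abs{v_\ep}^2-1)$ tail in \eqref{t_d_01}.

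For the second stage I divide \eqref{t_d_01} by $\ale$ and pass to the limit weakly as measures on $\Omega \times [0,T_*]$. The convergences $T_\ep/\ale \to T$, $b(\dt v_\ep, \nab_{B_\ep}v_\ep)/\ale \to p$, and $\tilde g_\ep/\ale \to \nu(t)\,dt$ come from Lemma \ref{measure_converge} and Proposition \ref{density_convergence}; $\mu(v_\ep,B_\ep)\to\mu$ and $V(v_\ep,B_\ep)\to V$ from Lemma \ref{vortex_path}; $\sigma\curl B_\ep\,\dt B_\ep^\bot/\ale \to \sigma\curl B_*\,\dt B_*^\bot$ from \eqref{b_l_04} of Proposition \ref{B_limits}; and $Z_\ep/\ale = Z$. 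Each summand in the $(\abs{v_\ep}^2-1)$-bracket of \eqref{t_d_01}, once divided by $\ale$, vanishes by Cauchy--Schwarz together with the bounds $\norm{1-\abs{v_\ep}^2}_{L^r} = O(\ep^{2/r}\sqrt{\ale})$ from Proposition \ref{B_bound}, $\norm{\dt B_\ep}_{L^2} = O(\sqrt{\ale})$ from the energy bound, and $\norm{f_\ep}_{C^1} = O(\ale^2)$ from the definition of $f_\ep$. This yields \eqref{t_d_02}.

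For the third stage, Lemma \ref{lebesgue_decomp} splits $\nu$, $T$, $\zeta$, $p$ each into a $\mu$-singular part plus a sum of $\delta_{a_i(t)}dt$-contributions, which gives \eqref{t_d_05}. Matching the $\delta_{a_i(t)}dt$-concentrated parts on both sides of \eqref{t_d_02}, using $\mu = 2\pi\sum_i d_i\delta_{a_i(t)}dt$ and $V = 2\pi\sum_i d_i \dot a_i^\bot\delta_{a_i(t)}dt$, and evaluating the smooth coefficients $b$, $Z$, $\nab\log b$ at $a_i(t)$, yields \eqref{t_d_04}. Matching the $\mu$-singular parts yields \eqref{t_d_03}; that $\sigma\curl B_*\,\dt B_*^\bot$ contributes entirely to $S_0$ rather than to any $S_i$ is the mutual singularity asserted by \eqref{dece_03} of Lemma \ref{decomp_estimates} together with the $\mu$-singularity of $\zeta_0$ and $\nu_0$. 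I expect the main obstacle to be the bookkeeping in the first stage: confirming that the scattered $\nab b$-contributions really combine into $-(\tilde g_\ep - \text{tr}(T_\ep)/2)\nab\log b$ requires writing $\text{tr}(T_\ep)$ out and verifying cancellations, and the curvature term produced by the covariant commutator must be correctly matched against $\nab^\bot\curl B_\ep$ via \eqref{s_r_01} to produce both $-\sigma\curl B_\ep\,\dt B_\ep^\bot$ and the Lorentz factor with the correct signs.
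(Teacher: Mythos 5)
Your proposal is correct and follows essentially the same route as the paper: a direct computation of $\diverge T_\ep$ using the reformulated PDEs \eqref{s_r_00}--\eqref{s_r_01} and the identity \eqref{m_e_e_1}, then a normalized limit invoking Lemma \ref{measure_converge}, Propositions \ref{density_convergence} and \ref{B_limits}, and Lemma \ref{vortex_path}, and finally the Lebesgue decomposition of Lemma \ref{lebesgue_decomp}. One small imprecision in your description of stage one: the Lorentz term $-\mu(v_\ep,B_\ep)\,b\,Z_\ep^\bot$ arises from pairing the $2iZ_\ep\cdot\nab_{B_\ep}v_\ep$ forcing in \eqref{s_r_00} with $\nab_{B_\ep}v_\ep$ and combining it with the $(\abs{v_\ep}^2-1)h_\ep'\,bZ_\ep^\bot$ piece coming from the $B$-equation substitution, rather than from the curvature/$\nab\vert\curl B_\ep\vert^2$ bookkeeping alone — but you correctly flag this bookkeeping as the step to check.
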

\begin{proof}
A direct calculation reveals that
\begin{multline}\label{t_d_1}
 \diverge{T_\ep} =  ( b \Delta_{B_\ep} v_\ep + b^2 v_\ep \ep^{-2}(1-\abs{v_\ep}^2) + \nab_{B_\ep}v_\ep \cdot \nab b  , \nab_{B_\ep} v_\ep)  - h_\ep' (b (i v_\ep,\nab_{B_\ep} v_\ep) + \nab^\bot h_\ep')^\bot
\\
- \nab{b} \frac{\abs{\nab_{B_\ep} v_\ep}^2}{2} - \nab {b^2} \frac{(1-\abs{v_\ep}^2)^2}{4\ep^2}  
+ \nab \left( \frac{(\abs{v_\ep}^2-1)}{2} \right)   \left(b  f_\ep \right)     +   \frac{(\abs{v_\ep}^2-1)}{2}     \left( f_\ep\nab b  + b \nab f_\ep \right)  .
\end{multline}
By plugging in the equations \eqref{s_r_00}--\eqref{s_r_01} and using \eqref{m_e_e_1}  we may rewrite
\begin{multline}\label{t_d_2}
 ( b \Delta_{B_\ep} v_\ep + b^2 v_\ep \ep^{-2}(1-\abs{v_\ep}^2) + \nab_{B_\ep}v_\ep \cdot \nab b  , \nab_{B_\ep} v_\ep)  - h_\ep' (b (i v_\ep,\nab_{B_\ep} v_\ep) + \nab^\bot h_\ep')^\bot \\
= \alpha b( \dt v_\ep,\nab_{B_\ep} v_\ep  ) - \frac{\beta \ale b}{2} V(v_\ep,B_\ep) - \sigma h_\ep' \dt B_\ep^\bot - \mu(v_\ep,B_\ep) b Z_\ep^\bot \\
-\beta \ale \frac{\dt B_\ep}{2}(1-\abs{v_\ep}^2)
- \nab \left(\frac{(\abs{v_\ep}^2-1)}{2} \right) \left(b  f_\ep \right),
\end{multline}
where we have employed the identities
\begin{equation}
 (-2i \nab_{B_\ep} v_\ep \cdot b Z_\ep,\nab_{B_\ep} v_\ep) = 2 (\partial_1^{B_\ep} v_\ep, i \partial_2^{B_\ep} v_\ep) b Z_\ep^\bot
\end{equation}
and 
\begin{equation}
 (\abs{v_\ep}^2-1) h'_\ep + 2 (\partial_1^{B_\ep} v_\ep, i \partial_2^{B_\ep} v_\ep)  = - \curl(iv_\ep,\nab_{B_\ep} v_\ep) -h'_\ep = -\mu(v_\ep,B_\ep)
\end{equation}
to identify the $- \mu(v_\ep,B_\ep) b Z_\ep^\bot$ term in \eqref{t_d_2}.  From \eqref{T_def} we may  calculate
\begin{equation}\label{t_d_10}
 \text{tr}(T_\ep) = -b^2\frac{(1-\abs{v_\ep}^2)^2}{2 \ep^2}  + \abs{h_\ep'}^2 + (\abs{v_\ep}^2-1)b f_\ep,
\end{equation}
which implies
\begin{multline}\label{t_d_3}
 - \nab{b} \frac{\abs{\nab_{B_\ep} v_\ep}^2}{2} - \nab {b^2} \frac{(1-\abs{v_\ep}^2)^2}{4\ep^2} = -g_\ep(v_\ep,B_\ep) \nab \log b \\
+  \nab \log b \left(  -b^2\frac{(1-\abs{v_\ep}^2)^2}{4 \ep^2}  + \frac{\abs{h_\ep'}^2}{2}  \right) 
= \nab \log b \left(  -\tilde{g}_\ep(v_\ep,B_\ep)  + \frac{\text{tr}(T_\ep)}{2} - (\abs{v_\ep}^2-1) b f_\ep     \right)  \\
= \nab \log b \left(  -\tilde{g}_\ep(v_\ep,B_\ep)  + \frac{\text{tr}(T_\ep)}{2}\right)  - (\abs{v_\ep}^2-1)  f_\ep \nab b  .
\end{multline}
Combining \eqref{t_d_1}--\eqref{t_d_2} with \eqref{t_d_3} then yields \eqref{t_d_01}.  Equation \eqref{t_d_02} follows by dividing \eqref{t_d_01} by $\ale$ and passing to the limit $\ep \rightarrow 0$, using the fact that $\ale^2 (1-\abs{v_\ep}^2) \to 0$ (by the upper bound on the energy) as well as  Lemma \ref{measure_converge} and  Proposition \ref{B_limits} to identify the structure of the limit.   The decomposition \eqref{t_d_05} with $S_0$ given by \eqref{t_d_03} and $S_i$ given by \eqref{t_d_04} follows by decomposing the terms on the right side of \eqref{t_d_02} according to Lemmas \ref{vortex_path} and \ref{lebesgue_decomp}, noting that \eqref{b_l_04} of Proposition \ref{B_limits} implies that $\sigma \curl{B_*} \dt B_*^\bot$ is absolutely continuous with respect to Lebesgue measure on $\Omega \times [0,T_*]$, and hence mutually singular with $\mu$.
\end{proof}

Now we show that $\abs{S_0}$ can be controlled in terms of $\nu_0(t)dt$ and $\zeta_0$.

\begin{lem}\label{S0_estimate}
Let $S_0$  be as in Lemma \ref{tensor_diverge}. For any $\eta\in C^0([0,T_*];\Rn{})$ satisfying $\eta(t) >0$,  we have the estimate
\begin{equation}\label{s_e_01}
 \abs{S_0} \le \eta \zeta_0 + \left( C + \frac{\alpha + \sigma}{\eta} \right)\nu_0(t) dt.
\end{equation}
\end{lem}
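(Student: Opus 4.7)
The plan is to obtain \eqref{s_e_01} directly by the triangle inequality applied to the explicit formula for $S_0$ given in \eqref{t_d_03}, combined with the three pointwise measure estimates already supplied by Lemma \ref{decomp_estimates}. There is essentially no extra analytic input required.

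First I would write out
\begin{equation*}
|S_0| \le \alpha |p_0| + \left(\nu_0 + \tfrac{1}{2}|\mathrm{tr}(T_0)|\right)|\nab \log b| + |\sigma \curl B_* \, \dt B_*^\bot|
\end{equation*}
as measures on $\om \times [0,T_*]$. Since $b$ is smooth with $\inf b > 0$ (assumption \eqref{b_lower_bound}), we have $|\nab \log b| \le C$ for some $C$ depending only on $b$ and $\Omega$. Bound \eqref{dece_01} gives $|\mathrm{tr}(T_0)| \le 2|T_0| \le C \nu_0(t)\,dt$, so the middle term is dominated by $C\nu_0(t)\,dt$ after adjusting the constant.

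Next I would insert \eqref{dece_02} and \eqref{dece_03} for the other two terms, using the same cutoff function $\eta(t)$ in both, to get
\begin{equation*}
\alpha |p_0| \le \tfrac{\eta}{2} \zeta_0 + \tfrac{\alpha}{\eta}\nu_0(t)\,dt,
\qquad
|\sigma \curl B_* \, \dt B_*^\bot| \le \tfrac{\eta}{2} \zeta_0 + \tfrac{\sigma}{\eta}\nu_0(t)\,dt.
\end{equation*}
Summing these two contributions with the bound on the middle term yields
\begin{equation*}
|S_0| \le \eta\, \zeta_0 + \left(C + \frac{\alpha + \sigma}{\eta}\right) \nu_0(t)\,dt,
\end{equation*}
which is exactly \eqref{s_e_01}.

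The only conceptual point to check is that the mutual singularity assertions used in Lemma \ref{decomp_estimates} really do let us add these measure inequalities on $\Omega \times [0,T_*]$ without interference from the concentrated parts along the vortex trajectories; this is already built into the Lebesgue decompositions of Lemma \ref{lebesgue_decomp}, since $|p_0|$, $|T_0|$, $\zeta_0$, $\nu_0(t)\,dt$, and (by \eqref{b_l_04}) $|\sigma \curl B_* \, \dt B_*^\bot|$ are all mutually singular with $\mu$. There is no real obstacle here — the lemma is a bookkeeping corollary of Lemmas \ref{tensor_diverge} and \ref{decomp_estimates}, and the main work has already been done in those two lemmas.
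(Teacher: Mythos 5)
Your proof is correct and follows essentially the same route as the paper: triangle inequality on \eqref{t_d_03}, then substitution of the three bounds from Lemma \ref{decomp_estimates} (using the same $\eta$ in both \eqref{dece_02} and \eqref{dece_03} so the two $\tfrac{\eta}{2}\zeta_0$ contributions sum to $\eta\zeta_0$), with the boundedness of $\nab\log b$ coming from \eqref{b_lower_bound}. Your intermediate step bounding $|\mathrm{tr}(T_0)| \le 2|T_0|$ is a small extra bit of care that the paper elides, but it changes nothing.
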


\begin{proof}
Using the definition of $S_0$ from Lemma \ref{tensor_diverge} along with estimates of Lemma  \ref{decomp_estimates}, we may bound
\begin{multline}
 \abs{S_0} \le  \alpha \abs{p_0} +\left( \nu_0 +  \abs{T_0} \right) \nab \log{b}  +\abs{ \sigma \curl{B_*} \dt B_*^\bot} \\
\le \frac{\eta \zeta_0}{2} + \frac{\alpha \nu_0}{\eta} + C \nu_0 + \frac{\eta \zeta_0}{2} + \frac{\sigma \nu_0}{\eta} = \eta \zeta_0 + \left( C + \frac{\alpha + \sigma}{\eta} \right)\nu_0,
\end{multline}
which is \eqref{s_e_01}.  Here we have used the fact that $b$ is smooth and bounded below, which is guaranteed by \eqref{b_lower_bound}.
\end{proof}

\section{Dynamics}\label{sec6}

We now use the convergence results of the last section to derive the dynamics of the vortices.  We begin with a result that allows us to relate $p_i$ to  the vortex velocity, $\dot{a}_i$.  This result does not constitute the full dynamical law for $a_i$ since we do not yet know the value of $p_i$ or $\nu_i$.

\begin{prop}\label{velocities}
For $i=1,\dotsc,n$, it holds that 
\begin{equation}
  p_i(t) = -\nu_i(t) \dot{a}_i(t) 
\end{equation}
for a.e. $t\in[0,T_*]$.

\end{prop}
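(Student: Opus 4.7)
The plan is to pass to the limit in the energy identity \eqref{m_e_e_01}, divided by $\ale$, to obtain a distributional identity on $\Omega \times (0, T_*)$ involving the limiting measures of Lemma \ref{measure_converge}, and then to isolate $\nu_i$ and $p_i$ by testing against a function supported in a shrinking tube around the trajectory $\{(a_i(t), t)\}$ while carefully tracking the error terms coming from the singular parts of the Lebesgue decomposition in Lemma \ref{lebesgue_decomp}.

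Using Lemma \ref{measure_converge}, Proposition \ref{B_limits}, and the observation $b Z_\ep \cdot V(v_\ep, B_\ep)/\ale = b Z \cdot V(v_\ep, B_\ep) \to b Z \cdot V$, the passage to the limit in \eqref{m_e_e_01} divided by $\ale$ yields
\begin{equation*}
 \dt \nu - \diverge p = -\zeta + b Z \cdot V + L \quad \text{in } \mathcal{D}'(\Omega \times (0, T_*)),
\end{equation*}
where $L$ is the distributional limit of $\curl(h_\ep' \dt B_\ep)/\ale$; by \eqref{b_l_03} and \eqref{b_l_04}, $L$ is the curl of an $L^1$ spacetime vector field and is in particular absolutely continuous with respect to Lebesgue measure on $\Omega \times [0,T_*]$. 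Fix $i \in \{1,\dotsc,n\}$, $\chi \in C_c^\infty((0, T_*))$, a unit vector $e \in \Rn{2}$, and $\psi \in C_c^\infty(B(0,R))$ with $\psi(0) = 0$ and $\nabla \psi(0) = e$ (for instance $\psi(x) = (x \cdot e) \eta(|x|)$ for a radial bump $\eta$). For $0 < \delta < \gamma_*/R$ set $\phi_\delta(x) = \delta \psi(x/\delta)$, so that $\supp \phi_\delta \subset B(0, R\delta)$, $\phi_\delta(0) = 0$, $\nabla \phi_\delta(0) = e$, $|\phi_\delta| \le C\delta$, and $|\nabla \phi_\delta| \le C$. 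Test the identity above against $\varphi(x, t) = \chi(t) \phi_\delta(x - a_i(t))$ (admissible after approximating $a_i$ in $H^1([0, T_*];\Omega)$ by smooth functions). Using the decompositions \eqref{ld_02}--\eqref{ld_03} together with Lemma \ref{vortex_path}, the contribution from vortex $i$ reduces (the condition $\phi_\delta(0) = 0$ killing the $\zeta_i$ and $(bZ \cdot V)|_{a_i}$ terms) to
\begin{equation*}
 \int_0^{T_*} \chi(t)\, e \cdot \bigl[\nu_i(t)\dot a_i(t) + p_i(t)\bigr] \, dt,
\end{equation*}
while the contributions from vortices $j \neq i$ vanish identically as soon as $R\delta < \gamma_*$.

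The remaining error terms come from $\nu_0$, $p_0$, $\zeta_0$, and $L$; each vanishes as $\delta \to 0$. For instance, writing $\mathcal{T}_\delta = \{(x, t) \in \Omega \times [0, T_*] : |x - a_i(t)| < R\delta\}$, the $\nu_0$-contribution is controlled by
\begin{equation*}
 C\delta \, \nu_0(\mathcal{T}_\delta) + C\left( \int_0^{T_*} \nu_0(t)(B(a_i(t), R\delta))^2 \, dt \right)^{1/2} \|\dot a_i\|_{L^2};
\end{equation*}
since $\nu_0 \perp \mu$ forces $\nu_0(t)(\{a_i(t)\}) = 0$ for a.e.\ $t$, one has $\nu_0(t)(B(a_i(t), R\delta)) \to 0$ pointwise a.e.\ as $\delta \to 0$, and the $L^\infty_t$ bound from \eqref{ld_04} enables dominated convergence. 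The analogous scheme handles $p_0$ and $\zeta_0$ (via their mutual singularity with $\mu$ from Lemma \ref{lebesgue_decomp}) and $L$ (via Lebesgue absolute continuity of its $L^1$ potential). Letting $\delta \to 0$ one obtains $\int_0^{T_*} \chi(t)\, e \cdot [\nu_i \dot a_i + p_i](t) \, dt = 0$ for every admissible $\chi$ and $e$, whence $p_i(t) = -\nu_i(t) \dot a_i(t)$ for a.e.\ $t$.

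The main technical obstacle is making the heuristic separation of the distributional identity ``into contributions at each vortex'' rigorous: one must show that the singular-part errors vanish uniformly as the test function concentrates onto the $H^1$-regular curve $\{(a_i(t), t)\}$. This hinges on the mutual singularity information provided by Lemma \ref{lebesgue_decomp} together with dominated convergence in time, and the low ($H^1$) regularity of $a_i$ forces the error estimates to be carried out in an $L^2_t$ rather than $L^\infty_t$ framework.
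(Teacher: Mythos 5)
Your proposal follows essentially the same route as the paper: you test the limiting form of the energy evolution identity \eqref{m_e_e_01} against a test function concentrated in a shrinking tube around the $i$-th vortex path (built from a mollification of $a_i$ to make it admissible), then use the mutual singularity from Lemma \ref{lebesgue_decomp}, the absolute continuity of the $\curl B_* \dt B_*$ term, and dominated convergence to kill all contributions except the one giving $\nu_i \dot a_i + p_i = 0$. The only cosmetic differences from the paper's proof are that you factor the test function as $\chi(t)\phi_\delta(x-a_i(t))$ with $\chi$ compactly supported in time (sidestepping the endpoint boundary terms the paper bounds by $O(\eta)$) and impose $\psi(0)=0$ to make the $\zeta_i$ and $bZ\cdot V$ contributions vanish identically rather than by an $O(\delta)$ estimate.
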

\begin{proof}
Fix $0 < \eta < \gamma_*$ (with $\gamma_*$ given by Lemma \ref{vortex_path}) and a smooth vector field $Y:[0,T_*]\rightarrow \Rn{2}$.  Let $\phi$ be a smooth function with support in $B(0,1)\times [0, T_*]$ so that $\nab \phi(0,t) = Y(t)$ for all $t \in [0, T_*]$.  Fix $i\in\{1,\dotsc,n\}$  and let $a^\gamma_i:[0,T_*]\rightarrow \Omega$ be a smooth mollification of the path $a_i(t)$, with $\gamma>0$ the mollification parameter.  Define  $\psi_\gamma(x,t) = \eta \phi((x-a^\gamma_i(t))/\eta,t)$ and  $\psi(x,t) = \eta \phi((x-a_i(t))/\eta,t)$.

Then the energy evolution equation \eqref{m_e_e_01} implies that
\begin{multline}
 \int_\Omega \psi_\gamma \frac{\tilde{g}_\ep(T_*)}{\ale} +   
\int_{\partial \Omega} \psi_\gamma \frac{(\abs{v_\ep(T_*)}^2-1)}{4\ale} \nab b \cdot \nu
  - \int_\Omega \psi_\gamma \frac{\tilde{g}_\ep(0)}{\ale}
- \int_{\partial \Omega} \psi_\gamma \frac{(\abs{v_\ep(0)}^2-1)}{4\ale} \nab b \cdot \nu \\
+ \int_0^{T_*} \int_\Omega \psi_\gamma \left( \alpha \frac{b\abs{\dt v_\ep}^2}{\ale} + 
  \sigma \frac{\abs{\dt B_\ep}^2}{\ale} \right) 
= \int_0^{T_*} \int_\Omega -\nab \psi_\gamma \cdot \frac{b (\dt v_\ep,\nab_{B_\ep} v_\ep) + h_\ep' \dt B_\ep^\bot}{\ale} \\
+ \int_0^{T_*} \int_\Omega \psi_\gamma V(v_\ep,B_\ep) \cdot b Z  + \int_0^{T_*} \int_\Omega \dt \psi_\gamma  \frac{\tilde{g}_\ep}{\ale} .
\end{multline}
We may pass to the limit $\ep \rightarrow 0$ in the last equation by using Propositions  \ref{B_bound}, \ref{B_limits}, and  \ref{density_convergence} along with Lemmas  \ref{vortex_path}, \ref{measure_converge}, to get
\begin{multline}
 \int_\Omega \psi_\gamma \nu(T_*) - \int_\Omega \psi_\gamma  \nu(0) + \int_0^{T_*} \int_\Omega \psi_\gamma  \zeta = \int_0^{T_*} \int_\Omega -\nab \psi_\gamma  \cdot (p + \curl{B_*} \dt B_*^\bot) \\
+ \int_0^{T_*} \int_\Omega \psi_\gamma  V \cdot Z  + \int_0^{T_*} \int_\Omega \dt \psi_\gamma  \, \nu.
\end{multline}
Now,
$\dt \psi_\gamma(x,t)= \eta \dt \phi((x-a_i^\gamma(t))/\eta, t)- \dot{a}_i^\gamma \cdot  \nab 
\phi(( x-a_i^\gamma(t))/\eta, t)$. Letting $\gamma
\rightarrow 0$, using the boundedness of all of the measures involved given by Lemma \ref{lebesgue_decomp}, and employing dominated convergence, we deduce that
\begin{multline}\label{vel_1}
 \int_\Omega \psi \nu(T_*) - \int_\Omega \psi  \nu(0) + \int_0^{T_*} \int_\Omega \psi  \zeta = \int_0^{T_*} \int_\Omega -\nab \psi  \cdot (p + \curl{B_*} \dt B_*^\bot )\\
+ \int_0^{T_*} \int_\Omega \psi  V \cdot Z  + \int_0^{T_*} \int_\Omega \(\eta \dt \phi((x-a_i(t))/\eta,t)  - \dot{a}_i (t) \cdot       \nab \phi((x-a_i(t))/\eta,t) 
 \) \nu.
\end{multline}
Note that here $\dot{a}_i$ is the time derivative of $a_i\in H^1([0, T_*])$ hence is an $L^2([0,T_*]; \Rn{2})$ function, while $\nu $ is $L^\infty([0,T_*],\mathcal{M}(\om))$.  Hence the product $\dot{a}_i\nu$ makes sense as an element of $L^2([0,T_*];  \Rn{2} \otimes \mathcal{M}(\om))$.

Recall that $\pnorm{\psi}{\infty} + \pnorm{\dt \psi}{\infty} \le C \eta$ and that the support of $\psi$ lies in a $\eta-$neighborhood of the path $a_i(t)$.  Passing to the limit $\eta \rightarrow 0$  reveals that 
\begin{equation}
  \int_\Omega \psi \nu(T_*) - \int_\Omega \psi  \nu(0) + \int_0^{T_*} \int_\Omega \psi  \zeta  \rightarrow 0 
\end{equation}
and
\begin{equation}
\int_0^{T_*} \int_\Omega \psi  V \cdot Z +    \eta \dt \phi((x-a_i(t))/\eta,t)      \nu \rightarrow 0. 
\end{equation}
Since $\nab \psi$ is supported in  $B(0,\eta)$ and $\pnorm{\nab \psi}{\infty} \le C < \infty$, we also have that
\begin{equation}
 \int_0^{T_*} \int_\Omega -\nab \psi  \cdot  \curl{B_*}\,  \dt B_*^\bot  \rightarrow 0.
\end{equation}
We may then pass to the limit $\eta \rightarrow 0$ in \eqref{vel_1} and utilize Lemma \ref{lebesgue_decomp}, and the fact that $\nab \phi(0,t)= \nab \psi(a_i(t),t) = Y(t)$ to deduce that
\begin{equation}
 0 =  \int_0^{T_*} Y \cdot (p_i + \nu_i \dot{a}_i).
\end{equation}
This result holds for any choice of $Y\in C^\infty([0,T_*];\Rn{2})$, which implies that
\begin{equation}
  p_i = -\nu_i \dot{a}_i 
\end{equation}
for a.e. $t \in [0,T_*]$.  This is the desired result.

\end{proof}

Now we can use a similar argument to show that $T_i(t)=0$ for a.e. $t \in [0,T_*]$.

\begin{prop}\label{tensor_structure}
For a.e. $t \in [0,T_*]$ and $i=1,\dotsc,n$ it holds that $T_i(t) =0$.
\end{prop}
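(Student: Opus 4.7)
The plan is to mimic the test-function argument of Proposition \ref{velocities}, but now testing the identity \eqref{t_d_01} against \emph{vector fields} concentrated near the $i$-th vortex path instead of scalar functions. Fix $i \in \{1,\dotsc,n\}$, $0 < \eta < \gamma_*/2$ (with $\gamma_*$ from Lemma \ref{vortex_path}), a vector field $\Phi \in C_c^\infty(B(0,1);\mr^2)$, and a cutoff $\theta \in C_c^\infty((0,T_*))$. Set
\begin{equation*}
 \Xi_\eta(x,t) := \eta\, \Phi\!\left(\frac{x-a_i(t)}{\eta}\right) \theta(t),
\end{equation*}
which is continuous on $\Omega\times[0,T_*]$, compactly supported in $\Omega\times(0,T_*)$ (since $\dist(a_i(t),\partial\Omega)\geq \gamma_*>\eta$), satisfies $\|\Xi_\eta\|_\infty \le C\eta$ and $\|D_x\Xi_\eta\|_\infty \le C$, and has support disjoint from the other vortex paths $a_j$, $j \ne i$. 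Should the mere $H^1$ regularity of $a_i$ cause any issue, one can first work with a mollified path $a_i^\gamma$ as in Proposition \ref{velocities} and then pass $\gamma \to 0$ by dominated convergence.

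The next step is to dot \eqref{t_d_01} with $\Xi_\eta/\ale$, integrate over $\Omega\times[0,T_*]$, and use the spatial divergence theorem (with no boundary contributions) to obtain
\begin{equation*}
 -\int_0^{T_*}\!\!\int_\Omega \frac{T_\ep}{\ale} : D_x\Xi_\eta \;=\; \int_0^{T_*}\!\!\int_\Omega \Xi_\eta \cdot \frac{\diverge T_\ep}{\ale}.
\end{equation*}
Since $D_x\Xi_\eta$ and $\Xi_\eta$ are continuous with compact support, passing $\ep\to 0$ via Lemmas \ref{measure_converge} and \ref{tensor_diverge} replaces $T_\ep/\ale$ and $\diverge T_\ep/\ale$ by $T$ and $\diverge T$, respectively. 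Inserting the Lebesgue decompositions of Lemmas \ref{lebesgue_decomp} and \ref{tensor_diverge}, the terms concentrated at $a_j$ for $j\ne i$ drop out, and we use $D_x \Xi_\eta(a_i(t),t) = \theta(t)\,D\Phi(0)$ together with $\Xi_\eta(a_i(t),t) = \eta\,\theta(t)\,\Phi(0)$ to reduce the identity to
\begin{equation*}
 -\int T_0 : D_x\Xi_\eta \;-\; \int_0^{T_*} \theta(t)\, D\Phi(0) : T_i(t)\, dt
\;=\; \int \Xi_\eta \cdot S_0 \;+\; \eta\,\Phi(0)\cdot \int_0^{T_*} \theta(t)\, S_i(t)\, dt.
\end{equation*}

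It remains to send $\eta \to 0$. All terms except $\int_0^{T_*}\theta\, D\Phi(0):T_i\,dt$ vanish: the right-hand side is bounded by $C\eta\big(|S_0|(\Omega\times[0,T_*]) + \int_0^{T_*}|S_i|\,dt\big)$, which is finite since $\diverge T$ is a bounded Radon measure, while the bound $|T_0|\le C\nu_0(t)\,dt$ from Lemma \ref{decomp_estimates} and $\supp D_x\Xi_\eta \subset \{(x,t):|x-a_i(t)|<\eta\}$ give
\begin{equation*}
 \left| \int T_0 : D_x\Xi_\eta \right| \;\le\; C\int_{\{|x-a_i(t)|<\eta\}} \nu_0(t)\,dt \;\longrightarrow\; 0,
\end{equation*}
because the mutual singularity of $\nu_0(t)\,dt$ and $\mu$ (which is concentrated on the graphs of the $a_j$) forces the graph of $a_i$ to have zero $\nu_0(t)\,dt$-measure, and the nested tubes shrink to this graph. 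We thus conclude $\int_0^{T_*} \theta(t)\, D\Phi(0):T_i(t)\,dt = 0$ for arbitrary $\theta$ and $\Phi$; since $D\Phi(0)$ may be chosen to be any $2\times 2$ matrix, this forces $T_i(t)=0$ for a.e.\ $t\in[0,T_*]$. The main obstacle in the argument is controlling the singular $T_0$-contribution, and the essential input is the pairing of the measure bound $|T_0|\le C\nu_0(t)\,dt$ with the singularity of $\nu_0(t)\,dt$ against $\mu$.
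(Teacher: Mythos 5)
Your argument is correct and follows essentially the same route as the paper: test the stress-energy identity against a vector field concentrated in an $\eta$-tube around the vortex path, use the divergence theorem to trade $\diverge T$ for $T$, pass to the limit, and let $\eta\to 0$, killing the $S_0$ term via $\|\Xi_\eta\|_\infty \le C\eta$ and the $T_0$ term via its mutual singularity with $\mu$. The only cosmetic differences are that the paper tests against all vortices simultaneously with the specific ansatz $\Xi = \eta\sum_i K_i(t)\cdot\frac{x-a_i(t)}{\eta}\psi(\frac{x-a_i(t)}{\eta})$ (so that $\Xi(a_i,t)=0$ exactly, making the $S_i$ term disappear automatically), and it notes explicitly that no mollification of the paths is needed since $\dt\Xi$ never appears; your use of a general $\Phi$ and the bound $\int|S_i|\,dt<\infty$ (from $\diverge T$ being a finite Radon measure by \eqref{t_d_02}) handles that same term with a small extra step.
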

\begin{proof}
Let $\psi \in C_c^\infty(\Rn{2};\Rn{})$ be such that $\supp(\psi) \in B(0,\gamma_*)$, where $\gamma_*$ is given by Lemma \ref{vortex_path}, and so that $\psi(x)=1$ on $B(0,\gamma_*/2)$.  Let $K_i \in C^0([0,T_*];\Rn{2\times 2})$ for $i=1,\dotsc,n$.  Define the vector field 
\begin{equation}
 \Xi(x,t) = \eta \sum_{i=1}^n K_i(t) \cdot \frac{(x-a_i(t))}{\eta} \psi \left(\frac{x-a_i(t)}{\eta}\right).
\end{equation}
Since Lemma \ref{vortex_path} says that $a_i \in H^1 \hookrightarrow C^{0,1/2}$, we have that $\Xi \in C^0(\Omega \times [0,T_*]; \Rn{2})$ and $D \Xi \in C^0(\Omega \times [0,T_*]; \Rn{2\times 2})$.  Note that, unlike in Proposition \ref{velocities}, we do not need continuity of $\dt \Xi$, so we do not have to use a smoothing of the vortex paths.

According to the divergence theorem, we have
\begin{equation}\label{t_s_1}
 \int_0^{T_*}\int_\Omega -\frac{T_\ep}{\ale}:D \Xi = \int_0^{T_*} \int_\Omega \frac{\diverge{T_\ep}}{\ale}\cdot \Xi.
\end{equation}
By the above continuity results, we may pass to the limit in \eqref{t_s_1} and employ the decompositions of $T$ and $\diverge{T}$, given respectively by Lemmas \ref{lebesgue_decomp} and \ref{tensor_diverge}, to see that
\begin{multline}\label{t_s_2}
  -\int_0^{T_*}\int_\Omega T_0 : D \Xi -\int_0^{T_*} \sum_{i=1}^n T_i(t): D \Xi(a_i(t),t) \\
=  \int_0^{T_*}\int_\Omega S_0 \cdot \Xi + \int_0^{T_*} \sum_{i=1}^n S_i(t) \cdot \Xi(a_i(t),t).
\end{multline}
By construction $\Xi(a_i(t),t) = 0$ and $D \Xi(a_i(t),t) = K_i(t)$, so \eqref{t_s_2} becomes
\begin{equation}\label{t_s_3}
  \int_0^{T_*} \sum_{i=1}^n T_i(t): K_i(t) = -\int_0^{T_*}\int_\Omega T_0 : D \Xi + S_0 \cdot \Xi.
\end{equation}

The vector field $\Xi$ satisfies $\pnormspace{\Xi}{\infty}{\Omega \times [0,T_*]} \le C \eta$.   This implies that
\begin{equation}
 \abs{\int_0^{T_*}\int_\Omega S_0 \cdot \Xi } \le C \eta \int_0^{T_*}\int_\Omega \abs{S_0}  \to 0 \text{ as }\eta \to 0
\end{equation}
since $\abs{S_0}$ has finite mass.  Also, since $T_0$ is singular with respect to $\mu$ and $D \Xi$ satisfies  $\pnormspace{D\Xi}{\infty}{\Omega \times [0,T_*]} \le C$  and $\supp(D \Xi(\cdot,t))\subset \cup_{i=1}^n B(a_i(t),\eta)$, we have that
\begin{equation}
 -\int_0^{T_*}\int_\Omega T_0 : D \Xi \to 0  \text{ as } \eta \to 0.
\end{equation}
Hence, taking the limit $\eta \to 0$ in \eqref{t_s_3}, we find that
\begin{equation}
 \int_0^{T_*} \sum_{i=1}^n T_i(t): K_i(t)=0.
\end{equation}
Since the $K_i$ were arbitrary, we immediately deduce that $T_i(t) = 0$ for a.e. $t \in [0,T_*]$. 
\end{proof}

Now we can deduce some estimates for $S_i$, as defined in Lemma \ref{tensor_diverge}.

\begin{lem}\label{Si_T_estimate}
Let  $S_i$, $i=1,\dotsc,n$ be as in Lemmas \ref{tensor_diverge}.  Then $S_i \in L^2([0,T_*];\Rn{2})$ for each $i=1,\dotsc,n$.  Moreover, if $Y_i \in L^2([0,T_*];\Rn{2})$ for each $i=1,\dotsc,n$, then for any $t \in [0,T_*]$ we have the estimate 
\begin{equation}\label{si_e_0}
 \int_0^{t} \sum_{i=1}^n S_i \cdot Y_i  \le  \hal \int_0^{t} \int_{\Omega}  \zeta_0 +  \int_0^t C\left( 1 +  \sum_{i=1}^n \abs{Y_i}^2\right)  \int_\Omega  \nu_0
\end{equation}
for some $C>0$.
\end{lem}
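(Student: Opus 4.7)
The first assertion, $S_i \in L^2$, follows directly from the explicit formula \eqref{t_d_04} upon substituting $T_i \equiv 0$ (Proposition \ref{tensor_structure}) and $p_i = -\nu_i \dot{a}_i$ (Proposition \ref{velocities}). The resulting expression is a combination of terms of the form $\nu_i \dot{a}_i$, $b(a_i) \dot{a}_i^\bot$, $\nu_i \nab \log b(a_i)$, and $b(a_i) Z^\bot(a_i)$, each of which lies in $L^2([0,T_*];\Rn{2})$ because $\nu_i \in L^\infty$ (Lemma \ref{lebesgue_decomp}), $\dot{a}_i \in L^2$ (Lemma \ref{vortex_path}), and $b$ and $Z$ are smooth (Remark \ref{Z_smooth}).

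For the inequality, a standard density argument using the first assertion and the pairing $L^1([0,T_*]) \times L^\infty([0,T_*])$ reduces matters to the case of continuous $Y_i$. The strategy is then to apply the spatial divergence theorem against a test vector field $\Xi$ chosen so that $\Xi(a_i(t),t) = Y_i(t)$. Fixing a bump $\psi \in C_c^\infty(\Rn{2};[0,1])$ equal to $1$ near the origin with $\supp \psi \subset B(0,1)$, and a \emph{fixed} scale $\lambda = \gamma_*/4$ (with $\gamma_*$ from Lemma \ref{vortex_path}), I take
\begin{equation*}
 \Xi(x,t) = \sum_{i=1}^n Y_i(t)\,\psi\!\left( \frac{x-a_i(t)}{\lambda}\right).
\end{equation*}
Then $\Xi$ is continuous with $\Xi(a_i(t),t) = Y_i(t)$, is spatially supported in $\cup_i B(a_i(t),\lambda)\csubset\Omega$ with these balls pairwise disjoint, and satisfies $\|\Xi(\cdot,t)\|_\infty + \|D\Xi(\cdot,t)\|_\infty \le C \max_i |Y_i(t)|$, with $C$ depending on the fixed $\lambda$. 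Applying the divergence theorem to $T_\ep/\ale$ at each fixed time (the boundary term vanishes since $\Xi$ is supported away from $\partial\Omega$), integrating in $t$, and passing to the limit $\ep \to 0$ via Lemma \ref{measure_converge} together with a smooth time-cutoff approximation of $\chi_{[0,t]}$, I use $T = T_0$ from Proposition \ref{tensor_structure} and the decomposition \eqref{t_d_05} to obtain
\begin{equation*}
 \int_0^t \sum_{i=1}^n S_i \cdot Y_i = - \int_0^t \int_\Omega T_0 : D\Xi - \int_0^t \int_\Omega S_0 \cdot \Xi.
\end{equation*}

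The bound $|T_0| \le C\nu_0(t)\,dt$ from Lemma \ref{decomp_estimates} combined with $\|D\Xi\|_\infty \le C \max_i |Y_i|$ controls the first integral by $C \int_0^t \max_i |Y_i| \int_\Omega \nu_0 \le C \int_0^t (1 + \sum_i |Y_i|^2)\int_\Omega \nu_0$. For the second, I apply Lemma \ref{S0_estimate} with the continuous positive function $\eta(t) = (2 + 2\max_i|Y_i(t)|)^{-1}$, tuned precisely so that $|\Xi|\eta \le 1/2$ pointwise; this produces exactly the $\tfrac{1}{2}\int_0^t \int_\Omega \zeta_0$ term together with an additional $C \int_0^t (1 + \sum_i |Y_i|^2) \int_\Omega \nu_0$ contribution, yielding \eqref{si_e_0}. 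The main obstacle is the apparent $1/\lambda$ blow-up of $\|D\Xi\|_\infty$: unlike in the proof of Proposition \ref{tensor_structure}, where $\Xi$ had to vanish at the vortices and the cutoff scale was sent to $0$, here $\Xi(a_i,t) = Y_i(t)$ is nonzero in general, so a shrinking cutoff would be disastrous. The saving observation is that nothing forces $\lambda$ to shrink — the statement of the lemma demands no such limit — so fixing $\lambda = \gamma_*/4$ absorbs $1/\lambda$ into a harmless constant.
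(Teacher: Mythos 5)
Your proof is correct and follows essentially the same route as the paper: the same test vector field $\Xi(x,t) = \sum_i Y_i(t)\psi((x-a_i(t))/\lambda)$ with a \emph{fixed} cutoff scale, the same divergence-theorem pairing leading to $\int_0^t \sum_i S_i\cdot Y_i = -\int_0^t\int_\Omega (T_0:D\Xi + S_0\cdot\Xi)$, the same estimates via Lemmas \ref{decomp_estimates} and \ref{S0_estimate} with a time-dependent $\eta$, and the same density argument for general $Y_i\in L^2$ (the paper does it at the end, you at the start). Two small points worth noting: first, with $\|\Xi(\cdot,t)\|_\infty \le \max_i|Y_i(t)|$ your choice $\eta = (2+2\max_i|Y_i|)^{-1}$ indeed gives $\eta\|\Xi\|_\infty < 1/2$, but when you invoke Lemma \ref{S0_estimate} you must still absorb the multiplicative constants from $(C + (\alpha+\sigma)/\eta)$ into the generic $C$ on the right — as the paper does by building a constant $C_*$ into $\eta$ explicitly — so it is cleaner to define $\eta$ with that constant included; second, your remark that restricting the time integral to $[0,t]$ requires approximating $\chi_{[0,t]}$ by continuous functions before passing to the weak limit is a legitimate refinement of a step the paper states without comment. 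Your closing observation — that unlike in Proposition \ref{tensor_structure} the cutoff scale must stay fixed here because $\Xi(a_i,t)=Y_i(t)\neq 0$ — correctly identifies why no rescaling is performed, which is implicit but not spelled out in the paper.
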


\begin{proof}
The result of Proposition \ref{velocities}, when combined with the definition of $S_i$ given by \eqref{t_d_04} in Lemma \ref{tensor_diverge} and the vanishing of $T_i$ given by Proposition \ref{tensor_structure}, implies that
\begin{equation}
  S_i =   - \alpha \nu_i \dot{a}_i -  \beta  \pi d_i b(a_i) \dot{a}_i^\bot - 2\pi d_i b(a_i) Z^\bot(a_i) - \nu_i \nab \log{b}(a_i).
\end{equation}
Lemma \ref{lebesgue_decomp} implies that $\nu_i \in L^\infty([0,T_*])$, Remark \ref{Z_smooth} shows that $Z$ is bounded, and assumption  \eqref{b_lower_bound} provides the boundedness of $b$ and $\nab \log{b}$; then since we know from Lemma \ref{vortex_path} that $\dot{a}_i \in L^2([0,T_*]; \Rn{2})$, we find that $S_i \in L^2([0,T_*]; \Rn{2})$.

We now turn to the proof of \eqref{si_e_0}, assuming initially that $Y_i \in C^0([0,T_*];\Rn{2})$ for each $i=1,\dotsc,n$.  Let $\psi \in C_c^\infty(\Rn{2};\Rn{})$ be such that $\supp(\psi) \in B(0,\gamma_*)$, where $\gamma_*$ is given by Lemma \ref{vortex_path}, and so that $\psi(x)=1$ on $B(0,\gamma_*/2)$.  Define the vector field 
\begin{equation}
 \Xi(x,t) = \sum_{i=1}^n Y_i(t) \psi(x-a_i(t)).
\end{equation}
For each fixed $t$ we have $\supp(\Xi(\cdot,t)) \subset \cup_{i=1}^n B(a_i(t),\gamma_*)$, and the choice of $\gamma_*$ implies that $a_j(t) \notin B(a_i(t),\gamma_*)$ for $i\neq j$.  Since $a_i \in C^{0,1/2}$ by Lemma \ref{vortex_path}, we know that $\Xi \in C^0(\Omega \times [0,T_*] ; \Rn{2})$ and  $D\Xi \in C^0(\Omega \times [0,T_*] ; \Rn{2\times 2})$.  If we define the continuous function
\begin{equation}
 M(t) := \sum_{i=1}^n \abs{Y_i(t)}^2,
\end{equation}
then for each  $t \in [0,T_*]$ we may bound
\begin{equation}\label{si_e_1}
 \norm{\Xi(\cdot,t)}_{C^1(\Omega)} \le C_* \sqrt{M(t)} 
\end{equation}
for a constant $C_*$ depending on $\gamma_*$ and $n$ but not on $t$.

For  $t \in [0,T_*]$ we may argue as in \eqref{t_s_1}--\eqref{t_s_2} of Proposition \ref{tensor_structure}, replacing the temporal integration interval $[0,T_*]$ with $[0,t]$ to find that
\begin{equation}\label{si_e_2}
 -\int_0^t \int_\Omega T_0 : D \Xi - \int_0^t \sum_{i=1}^n T_i : D \Xi(a_i) = \int_0^t \int_\Omega S_0 \cdot \Xi  + \int_0^t \sum_{i=1}^n S_i \cdot \Xi(a_i).
\end{equation}
We have $\Xi(a_i(r),r) = Y_i$ and $D\Xi(a_i(r),r) = 0$ for $i=1,\dotsc,n$ and $r \in [0,t]$, so \eqref{si_e_2} becomes
\begin{equation}\label{si_e_3}
\int_0^t \sum_{i=1}^n S_i \cdot Y_i =  -\int_0^t \int_\Omega T_0 : D \Xi + S_0 \cdot \Xi.
\end{equation}
But by \eqref{si_e_1} we may estimate
\begin{multline}\label{si_e_4}
 -\int_0^t \int_\Omega T_0 : D \Xi + S_0 \cdot \Xi  \le \int_0^t \int_\Omega \norm{\Xi(\cdot,r)}_{C^1(\Omega)}  \left(\abs{T_0} + \abs{S_0}\right)  \\
\le C_*   \int_0^t   \int_\Omega \sqrt{M} \left( \abs{T_0} + \abs{S_0}\right).
\end{multline}

Now we set $\eta \in C^0$ according to
\begin{equation}
\eta(s) =  \frac{1}{2 C_* (1+ \sqrt{M(s)})}
\end{equation}
with $C_*$ the constant on the right side of \eqref{si_e_1};  this choice  implies that
\begin{equation}
  \eta C_* \sqrt{M} \le  \frac{1}{2}, \text{ and }
\end{equation}
\begin{equation}
C_* \sqrt{M} \left( C + \frac{\alpha + \sigma}{\eta} \right) = C_* \sqrt{M}\left( C + 2 C_* (\alpha + \sigma) (1+ \sqrt{M(s)})   \right) \le C(1+M),
\end{equation}
where we have used Cauchy's inequality, and in the last bound $C$ depends on $C_*$.  We use this $\eta$ in Lemma \ref{S0_estimate} to bound
\begin{equation}\label{si_e_5}
 C_* \sqrt{M} \abs{S_0} \le \eta C_* \sqrt{M} \zeta_0 +  C_* \sqrt{M} \left( C + \frac{\alpha + \sigma}{\eta} \right) \nu_0(s) ds \le \frac{\zeta_0}{2} + C(1+M(s)) \nu_0(s) ds.
\end{equation}
On the other hand, \eqref{dece_01} of Lemma \ref{decomp_estimates} and Cauchy's inequality imply that
\begin{equation}\label{si_e_6}
C_* \sqrt{M} \abs{T_0} \le C(1+M(s)) \nu_0(s) ds. 
\end{equation}

Now we use \eqref{si_e_5}--\eqref{si_e_6} in \eqref{si_e_3}--\eqref{si_e_4} to deduce that
\begin{equation}\label{si_e_7}
 \int_0^t \sum_{i=1}^n Y_i \cdot S_i \le \frac{1}{2} \int_0^t \int_\Omega \zeta_0 + \int_0^t \left( C(1+ M(s)) \int_\Omega \nu_0(s)\right) ds.
\end{equation}
Notice that the right side of \eqref{si_e_7} is finite since $\nu_0 \in L^\infty([0,T_*];\mathcal{M}(\Omega))$ and $\zeta_0$ has finite mass.  The bound \eqref{si_e_7} proves \eqref{si_e_0} in the case that $Y_i \in C^0$.  If instead $Y_i \in L^2$, then we let $Y_i^\lambda \in C^\infty$ be a smooth mollification of $Y_i$ so that $Y_i^\lambda \to Y_i$ in $L^2$, and we apply \eqref{si_e_7} to $Y_i^\lambda$ to get
\begin{equation}\label{si_e_8}
 \int_0^t \sum_{i=1}^n Y_i^\lambda \cdot S_i \le \frac{1}{2} \int_0^t \int_\Omega \zeta_0 + \int_0^t \left[ C\left(1+ \sum_{i=1}^n \abs{Y_i^\lambda(s)}^2 \right) \int_\Omega \nu_0(s)\right] ds.
\end{equation}
Then since $Y_i^\lambda \to Y_i$ in $L^2$ and $S_i \in L^2$, we may send $\lambda \to 0$ on the left side of \eqref{si_e_8};  since  $\nu_0 \in L^\infty([0,T_*];\mathcal{M}(\Omega))$, we may use dominated convergence to pass to the limit $\lambda \to 0$ on the right of \eqref{si_e_8}.  Taking these limits then yields \eqref{si_e_0}.

\end{proof}

Next we give a result comparing the measure $\zeta$ to $\nu$ through the use of the ``product estimate.''

\begin{prop}\label{dt_lower_bound}
 It holds that
\begin{equation}
 \int_0^{T_*} \sum_{i=1}^n \zeta_i \ge \int_0^{T_*} \alpha \pi^2 \sum_{i=1}^n b^2(a_i) \frac{\abs{\dot{a}_i}^2}{\nu_i}.
\end{equation}
\end{prop}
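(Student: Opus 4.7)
The plan is to apply the product estimate \eqref{p_e_1} of Proposition \ref{prod_est} with carefully chosen localized test functions and then optimize. Fix $i\in\{1,\dotsc,n\}$ and let $\chi_i^\eta\in C^\infty(\Omega\times[0,T_*];[0,1])$ be a cutoff supported in an $\eta$-tube around $\{(a_i(t),t):t\in[0,T_*]\}$ with $\chi_i^\eta(a_i(t),t)=1$; by Lemma \ref{vortex_path}, for $\eta<\gamma_*/4$ the support is disjoint from the other vortex paths. For a vector field $e_i\in C^0([0,T_*];\Rn{2})$ to be chosen, I take $\psi(x,t)=\chi_i^\eta(x,t)\sqrt{b(x)}$ and $Y(x,t)=\chi_i^\eta(x,t)\sqrt{b(x)}\,e_i(t)$ in \eqref{p_e_1}. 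Using $V=2\pi\sum_j d_j\dot{a}_j^\bot\delta_{a_j(t)}\,dt$ from Lemma \ref{vortex_path}, the left-hand side of \eqref{p_e_1} simplifies to $\pi\bigl|d_i\int_0^{T_*}b(a_i)\,\dot{a}_i^\bot\cdot e_i\,dt\bigr|$.

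For the right-hand side factors: $\int\!\int\chi_i^{2\eta}\,b\,|\partial_t v_\ep|^2/\ale$ is controlled using that $\alpha b|\partial_t v_\ep|^2/\ale$ is dominated by $\zeta$ in the limit (from Lemma \ref{measure_converge}, since $\sigma|\partial_t B_\ep|^2/\ale\ge 0$); testing against $\chi_i^{2\eta}/\alpha$, applying the Lebesgue decomposition of $\zeta$ from Lemma \ref{lebesgue_decomp}, and noting that $\int\chi_i^{2\eta}\,d\zeta_0\to 0$ as $\eta\to 0$ by the mutual singularity of $\zeta_0$ with $\mu$, one gets the limit $\int_0^{T_*}\zeta_i/\alpha\,dt$. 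For $\int\!\int\chi_i^{2\eta}\,b\,|\nabla_{B_\ep}v_\ep\cdot e_i|^2/\ale$, I invoke the sharp isotropy of the concentrated kinetic energy at vortices, a standard consequence of Sandier--Serfaty theory \cite{ss_prod,ss_book}: the bilinear form $(\partial_j^{B_\ep}v_\ep,\partial_k^{B_\ep}v_\ep)/\ale$ has a concentrated part at each vortex of the form $c_i(t)\delta_{jk}\delta_{a_i(t)}\,dt$, where $c_i=\nu_i/b(a_i)$, because $\tfrac{1}{2}b|\nabla_{B_\ep}v_\ep|^2/\ale$ captures the full vortex concentration of $\tilde g_\ep/\ale$: the potential term is $o(\ale)$ by Corollary \ref{energy_bound}, while $|\curl B_\ep|^2/\ale$ converges strongly in $L^1$ by Proposition \ref{B_limits} and hence does not contribute to the concentrated part. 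Tracing through (and using that $b(x)\to b(a_i(t))$ on the shrinking support of $\chi_i^\eta$), the spatial factor converges to $\int_0^{T_*}|e_i|^2\,\nu_i\,dt$.

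Substituting and squaring \eqref{p_e_1} gives
\[
\pi^2\Bigl(\int_0^{T_*}d_i\,b(a_i)\,\dot{a}_i^\bot\cdot e_i\,dt\Bigr)^2\le\Bigl(\int_0^{T_*}|e_i|^2\,\nu_i\,dt\Bigr)\Bigl(\int_0^{T_*}\frac{\zeta_i}{\alpha}\,dt\Bigr).
\]
I now take $e_i(t):=d_i\,b(a_i(t))\,\dot{a}_i^\bot(t)/\nu_i(t)$, which lies in $L^2([0,T_*];\Rn{2})$ by the $H^1$ regularity of $a_i$ (Lemma \ref{vortex_path}), the smoothness and positivity of $b$, and the lower bound $\nu_i\ge\pi b(a_i)>0$ (Lemma \ref{lebesgue_decomp}); approximating $e_i$ in $L^2$ by $C^0$ fields and passing to the limit through the bilinear dependence, one computes directly that $\int d_i b(a_i)\dot{a}_i^\bot\cdot e_i\,dt=\int|e_i|^2\nu_i\,dt=\int b^2(a_i)|\dot{a}_i|^2/\nu_i\,dt$. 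Dividing by this common positive factor yields $\alpha\pi^2\int_0^{T_*}b^2(a_i)|\dot{a}_i|^2/\nu_i\,dt\le\int_0^{T_*}\zeta_i\,dt$, and summing over $i$ gives the proposition.

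The principal obstacle is the sharp identification of the spatial factor: a direct Cauchy--Schwarz bound $|\nabla_{B_\ep}v_\ep\cdot e_i|^2\le|\nabla_{B_\ep}v_\ep|^2|e_i|^2$ would lose a factor of $2$ (since the total kinetic concentration at a vortex is $2\nu_i/b(a_i)$ while the directional concentration in any single direction is only $\nu_i/b(a_i)$), and without this sharp constant one would obtain $\alpha\pi^2/2$ rather than $\alpha\pi^2$. The required isotropy---that the tensor $(\partial_j^{B_\ep}v_\ep,\partial_k^{B_\ep}v_\ep)/\ale$ concentrates proportionally to the identity at each vortex---stems from the rotational symmetry of the standard vortex profile, and survives the inclusion of pinning and the gauge field since $b$ is nearly constant on the shrinking support of $\chi_i^\eta$ and $B_\ep=O(\sqrt\ale)$ is subleading and non-concentrating by Proposition \ref{B_limits}. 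The other steps---passage to limits in $\ep$ and $\eta$, the choice of $e_i$, and the final division---are routine given the decomposition machinery developed in Section \ref{sec5}.
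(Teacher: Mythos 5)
Your strategy is the same as the paper's: apply the product estimate \eqref{p_e_1} with test functions weighted by $\sqrt{b}$ and localized near a single vortex path, pass to the limit in $\ep$ and then in the localization parameter, and optimize the direction field $e_i$ to saturate Cauchy--Schwarz. The computation of the left-hand side, the treatment of the time-derivative factor via $\zeta$ and the mutual singularity of $\zeta_0$ with $\mu$, and the optimal choice $e_i = d_i b(a_i)\dot{a}_i^\bot/\nu_i$ all match the paper's argument (the paper merely bundles all vortices into one pair $(Y_j,\psi_j)$ instead of treating them one at a time; the two are equivalent after summing).

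However, your justification of the crucial ``sharp isotropy'' step has a genuine gap. You assert that the concentrated part of $(\partial_j^{B_\ep}v_\ep,\partial_k^{B_\ep}v_\ep)/\ale$ at $a_i$ equals $(\nu_i/b(a_i))\,\delta_{jk}$, and you argue this ``because $\tfrac12 b|\nabla_{B_\ep}v_\ep|^2/\ale$ captures the full vortex concentration of $\tilde g_\ep/\ale$.'' That inference does not follow: knowing the concentrated part of the \emph{trace} of the tensor is $2\nu_i/b(a_i)$ tells you nothing about whether the tensor is isotropic — a tensor concentrating as $\operatorname{diag}(2\nu_i/b(a_i),0)$ has the same trace. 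The isotropy is a strictly stronger fact, and attributing it to ``standard Sandier--Serfaty theory'' or to the rotational symmetry of the model vortex profile does not make it rigorous in the present setting (pinning, gauge, mixed flow, possible excess energy). The paper never invokes such a ``standard'' result; it derives the needed concentration behavior from the identity \eqref{Tadd}, namely $b|\nabla_{B_\ep}v_\ep\cdot Y|^2 = T_\ep:Y\otimes Y + \tilde g_\ep|Y|^2 - |\curl B_\ep|^2|Y|^2$, together with Proposition \ref{tensor_structure}, which establishes $T_i(t)=0$. That proposition (proved by testing $\diverge T_\ep$ against vector fields vanishing linearly at the vortices) is precisely the isotropy statement you need, and it is available to you since it is established before the present result. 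Replace your heuristic isotropy argument by: from \eqref{Tadd}, the concentrated part of $\lim b|\nabla_{B_\ep}v_\ep\cdot Y|^2/\ale$ at $a_i$ equals $T_i:Y(a_i)\otimes Y(a_i) + \nu_i|Y(a_i)|^2$ (the $|\curl B_\ep|^2$ term is non-concentrating by Proposition \ref{B_limits}), and this reduces to $\nu_i|Y(a_i)|^2$ by Proposition \ref{tensor_structure}. With that repair, the rest of your argument goes through correctly.
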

\begin{proof}

Proposition \ref{prod_est} states that
\begin{equation}
	\abs{\int_0^{T_*} \int_\Omega \frac{b V \cdot \psi Y}{2}}  
\le \liminf_{\ep\rightarrow 0} \left( \int_0^{T_*} \int_\Omega \frac{b \abs{\nab_{B_\ep} v_\ep \cdot Y}^2}{\ale} \right)^{1/2}
	\left(\int_0^{T_*} \int_\Omega  \frac{b \abs{ \psi \dt v_\ep}^2}{\ale} \right)^{1/2}  
\end{equation}
for any  $Y\in C^0(\Omega \times [0,T_*];\Rn{2})$ and $\psi \in C^0(\Omega \times[0,T_*])$.  Note that from \eqref{Tadd} we may compute that 
\begin{equation}
 b \abs{\nab_{B_\ep} v_\ep \cdot Y}^2 = T_\ep:Y \otimes Y +  \tilde{g}_\ep \abs{Y}^2 - \abs{\curl B_\ep}^2 \abs{Y}^2,
\end{equation}
from which we deduce, using the structure of $V$ given in Lemma \ref{vortex_path} and the limits given by Lemma \ref{measure_converge}, that 
\begin{equation}\label{dt_l_b_1}
 \abs{\int_0^{T_*}  \sum_{i=1}^n \pi d_i b(a_i) \dot{a}_i^\bot \cdot  Y(a_i) } 
\le  \left( \int_0^{T_*}\int_\Omega \nu \abs{Y}^2 +  T: Y \otimes Y \right)^{1/2}
	\left(\int_0^{T_*} \int_\Omega  \frac{\psi^2 \zeta}{\alpha} \right)^{1/2}.
\end{equation}

We may  apply this result with $\{Y_j,\psi_j\}_{j\in \mathbb{N}}$ a sequence supported in a $\gamma_*/j$ neighborhood  (with $\gamma_*>0$ given by Lemma \ref{vortex_path}) of each of the paths $a_i$ so that 
\begin{equation}
 Y_j(a_i) \rightarrow \pi d_i \frac{b(a_i)}{\nu_i} \dot{a}_i^\bot 
\text{ and } 
\psi_j(a_i) \to 1 \text{ as } j \rightarrow \infty.
\end{equation}
Then  since Proposition \ref{tensor_structure} says that $T_i=0$, we know that
\begin{equation}
 \int_0^{T_*} \int_\Omega \nu \abs{Y_j}^2 +  T: Y_j \otimes Y_j  \rightarrow \pi^2 \int_0^{T_*} \sum_{i=1}^n  b^2(a_i )\frac{\abs{\dot{a}_i}^2}{\nu_i} \text{ as }j \to \infty.
\end{equation}
Also, as $j \to \infty$, 
\begin{equation}
\int_0^{T_*}  \sum_{i=1}^n \pi d_i b(a_i) \dot{a}_i^\bot \cdot Y_j(a_i) ds \rightarrow \pi^2 \int_0^{T_*} \sum_{i=1}^n b^2(a_i) \frac{\abs{\dot{a}_i}^2}{\nu_i},
\end{equation}
and
\begin{equation}
 \int_0^{T_*} \int_\Omega  \frac{\psi_j^2 \zeta}{\alpha}  \to \frac{1}{\alpha} \int_0^{T_*} \sum_{i=1}^n \zeta_i. 
\end{equation}
The result follows by passing to the limit $j\rightarrow \infty$ with these $Y_j,\psi_j$ in \eqref{dt_l_b_1}.

\end{proof}

With this and the previous lemma, we can identify $\nu$ and   find that the energy does not actually increase by $O(\ale)$.

\begin{prop}\label{nu_equality}
We have 
\begin{equation}\label{n_e_0}
 \tilde{F}_\ep(v_\ep,B_\ep)(t) \le \pi\sum_{i=1}^n b(a_i(t)) \ale + o(\ale) \text{ for all } t\in[0,T_*].
\end{equation}
Moreover, $\zeta_0=0$ and for $t\in[0,T_*]$ we have $\nu_0(t) =0$ and 
\begin{equation}
 \nu_i(t) = \pi b(a_i(t)) \text{ for each } i=1,\dotsc,n.
\end{equation}
\end{prop}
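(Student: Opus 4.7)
The strategy is exactly the Gronwall argument sketched in Section \ref{glsimple}, now executed with the Lebesgue decomposition of $\nu$, $\zeta$, $p$, $T$, and $\diverge T$ established in Lemmas \ref{lebesgue_decomp} and \ref{tensor_diverge}. First I would integrate the modified energy identity \eqref{m_e_e_02} in time from $0$ to $t$ and in space over $\Omega$, then divide by $\ale$ and pass to the limit $\ep \to 0$ using Proposition \ref{density_convergence} (for the boundary terms I would note that Lemma \ref{bndry_conv} makes them $o(\ale)$, so they drop out in the limit). Since $V(t) = 2\pi \sum_i d_i \dot{a}_i^\bot \delta_{a_i(t)}$ by Lemma \ref{vortex_path} and $X \cdot Y^\bot = -X^\bot \cdot Y$, the right-hand side becomes $-\int_0^t \sum_{i=1}^n 2\pi d_i b(a_i) Z^\bot(a_i) \cdot \dot{a}_i$. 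Using the well-preparedness \eqref{well_prepared_def} for $\nu(0)$ and the fundamental theorem of calculus to write $\pi\sum_i b(a_i(t)) - \pi\sum_i b(a_i(0)) = \int_0^t \pi \sum_i b(a_i)\nabla\log b(a_i)\cdot \dot{a}_i$, this yields, after rearrangement,
\begin{multline*}
\int_\Omega \nu_0(t) + \sum_{i=1}^n [\nu_i(t) - \pi b(a_i(t))] + \int_0^t\int_\Omega \zeta_0 + \int_0^t \sum_{i=1}^n \zeta_i \\
= \int_0^t \sum_{i=1}^n \bigl[\alpha \nu_i |\dot{a}_i|^2 + (\nu_i - \pi b(a_i))\nabla \log b(a_i) \cdot \dot{a}_i + S_i \cdot \dot{a}_i\bigr],
\end{multline*}
where to identify the $S_i \cdot \dot{a}_i$ term I invoke Lemma \ref{tensor_diverge}, Proposition \ref{velocities} (so that $p_i = -\nu_i \dot{a}_i$), and Proposition \ref{tensor_structure} (so that $T_i = 0$), observing that $\dot{a}_i^\bot \cdot \dot{a}_i = 0$ kills the $\beta$ contribution.

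Next I would use Proposition \ref{dt_lower_bound} to replace $\int_0^t \sum_i \zeta_i$ on the left by the lower bound $\int_0^t \alpha \pi^2 \sum_i b^2(a_i)|\dot{a}_i|^2/\nu_i$, and combine this with the $\alpha\nu_i |\dot{a}_i|^2$ term on the right to get $\alpha|\dot{a}_i|^2(\nu_i^2 - \pi^2 b^2(a_i))/\nu_i$. Because $\nu_i \ge \pi b(a_i) \ge \pi\inf b > 0$ and $\nu_i$ is bounded above by Lemma \ref{lebesgue_decomp}, this is bounded by $C|\dot{a}_i|^2 (\nu_i - \pi b(a_i))$. The $\nabla \log b$ term is controlled identically. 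For the $S_i \cdot \dot{a}_i$ contribution I apply Lemma \ref{Si_T_estimate} with $Y_i = \dot{a}_i \in L^2$ to get the bound $\tfrac{1}{2}\int_0^t \int_\Omega \zeta_0 + \int_0^t C(1 + \sum_i|\dot{a}_i|^2)\int_\Omega \nu_0$. Absorbing the $\tfrac{1}{2}\int_0^t\int_\Omega \zeta_0$ into the LHS and collecting terms yields
\begin{multline*}
\int_\Omega \nu_0(t) + \sum_{i=1}^n [\nu_i(t) - \pi b(a_i(t))] + \tfrac{1}{2}\int_0^t \int_\Omega \zeta_0 \\
\le \int_0^t C\Bigl(1 + \sum_{i=1}^n |\dot{a}_i|^2\Bigr)\Bigl[\int_\Omega \nu_0 + \sum_{i=1}^n (\nu_i - \pi b(a_i))\Bigr].
\end{multline*}

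Since $a_i \in H^1([0,T_*])$, the factor $1 + \sum_i |\dot{a}_i|^2$ belongs to $L^1([0,T_*])$. Gronwall's inequality then forces the left-hand side to vanish identically, and using the positivity constraints $\nu_0 \ge 0$, $\zeta_0 \ge 0$ and the bound $\nu_i \ge \pi b(a_i)$ from Lemma \ref{lebesgue_decomp}, I conclude $\nu_0(t) = 0$, $\nu_i(t) = \pi b(a_i(t))$ for all $t\in[0,T_*]$ and $i$, and $\zeta_0 = 0$. Finally, the energy bound \eqref{n_e_0} follows: by Proposition \ref{density_convergence}, $\tilde{F}_\ep(v_\ep, B_\ep)(t)/\ale \to \int_\Omega \nu(t) = \sum_i \nu_i(t) = \pi \sum_i b(a_i(t))$ for every $t\in[0,T_*]$, which is exactly \eqref{n_e_0}.

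The main obstacle is the bookkeeping in the core identity: extracting $S_i \cdot \dot{a}_i$ cleanly requires the two non-trivial structural facts $p_i = -\nu_i \dot{a}_i$ and $T_i = 0$, both already established, and one must verify that the boundary term and the $\ale^2 b\phi_0(|v_\ep|^2-1)/2$ contribution in $\tilde{g}_\ep$ indeed disappear on dividing by $\ale$ (which they do by the $L^r$ estimates on $1-|v_\ep|^2$ from Proposition \ref{B_bound}). Once the identity is in hand, the Gronwall closure is standard.
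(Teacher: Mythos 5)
Your proposal is correct and follows essentially the same line of argument as the paper's proof: integrate the energy evolution identity \eqref{m_e_e_02}, pass to the limit using the decompositions of Lemmas \ref{lebesgue_decomp} and \ref{tensor_diverge} together with $p_i = -\nu_i\dot{a}_i$ (Proposition \ref{velocities}) and $T_i = 0$ (Proposition \ref{tensor_structure}), insert the product-estimate bound of Proposition \ref{dt_lower_bound} and the $S_i$ bound of Lemma \ref{Si_T_estimate}, and close with Gronwall. The only cosmetic difference is that you record the full limiting equality before invoking Proposition \ref{dt_lower_bound}, whereas the paper applies that estimate immediately when passing to the limit; the content is identical.
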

\begin{proof}
 Integrating the energy evolution equation \eqref{m_e_e_02} in time between $0$ and $t\le T_*$ yields
\begin{equation}
  \frac{\tilde{F}_\ep(v_\ep,B_\ep)(t)}{\ale} -  \frac{\tilde{F}_\ep(v_\ep,B_\ep)(0)}{\ale} + \int_0^t \int_\Omega  \frac{ \alpha b \abs{\dt v_\ep}^2 + \sigma \abs{\dt B_\ep}^2}{\ale} = \int_0^t \int_\Omega V(v_\ep,B_\ep) \cdot b Z.
\end{equation}
Passing to the limit $\ep \rightarrow 0$ and using  Proposition \ref{dt_lower_bound}  and the well-preparedness assumption \eqref{well_prepared_def} shows that
\begin{equation}\label{n_e_1}
 \int_\Omega \nu(t) - \sum_{i=1}^n \pi b(a_i(0)) + \int_0^t \alpha \pi^2 \sum_{i=1}^n b^2(a_i) \frac{\abs{\dot{a}_i}^2}{\nu_i}  + \int_0^t \int_\Omega \zeta_0 
\le \int_0^t \sum_{i=1}^n 2\pi d_i  b(a_i) Z(a_i)\cdot \dot{a}_i^\bot.
\end{equation}
Since $a_i \in H^1([0,T_*]; \Omega)$ and $b$ is smooth, we have that $b\circ a_i \in H^1([0,T_*];\Rn{})$ and hence is absolutely continuous on $[0,T_*]$ (cf. Theorems 4.2.2/1 and 4.9.1/1 of \cite{ev_gar});  this implies that
\begin{equation}\label{n_e_2}
 - \sum_{i=1}^n \pi b(a_i(0)) =
- \sum_{i=1}^n   \pi b(a_i(t)) + \int_0^t \sum_{i=1}^n 
\pi \nab b(a_i)\cdot \dot{a}_i .
\end{equation}
By taking the dot product of  \eqref{t_d_04} of Lemma \ref{tensor_diverge} with $\dot{a}_i$,  employing  Proposition \ref{velocities} for $p_i=-\nu_i \dot{a}_i$, and setting $T_i=0$ with the help of Proposition \ref{tensor_structure}, we find that
\begin{equation}\label{n_e_3}
\begin{split}
 2\pi d_i b(a_i) Z(a_i) \cdot \dot{a}_i^\bot &= -2\pi d_i b(a_i) Z^\bot(a_i) \cdot \dot{a}_i \\
&=  \dot{a}_i \cdot \left[ \alpha \nu_i \dot{a}_i + \nu_i \nab \log{b(a_i)} + \beta \pi d_i b(a_i)  \dot{a}_i^\bot  + S_i \right] \\
&= \alpha \nu_i \abs{\dot{a}_i}^2 + \nu_i \dot{a}_i \cdot \nab \log{b(a_i)} + \dot{a}_i \cdot S_i.
\end{split}
\end{equation}
Plugging \eqref{n_e_2} and \eqref{n_e_3} into \eqref{n_e_1} and using the fact that $\nu(t) = \nu_0(t) +  \sum \nu_i(t) \delta_{a_i(t)}$, we find after rearranging that 
\begin{multline}\label{n_e_4}
\int_\Omega \nu_0(t) + \sum_{i=1}^n (\nu_i(t) - \pi b(a_i(t))) + \int_0^t \int_\Omega \zeta_0  
\le \alpha \int_0^t \sum_{i=1}^n \abs{\dot{a}_i}^2\left( \nu_i  - \frac{\pi^2 b^2(a_i )}{\nu_i}\right) \\
 +\int_0^t \sum_{i=1}^n  (\dot{a}_i \cdot  \nab \log b(a_i))(\nu_i  - \pi b(a_i)) 
+ \int_0^t \sum_{i=1}^n \dot{a}_i \cdot S_i.
\end{multline}

Define the function $M(t) = \sum_{i=1}^n \abs{\dot{a}_i(t)}^2$.  We now want to estimate the three temporal integrals on the right of \eqref{n_e_4} in terms of $1+ M(t)$.  For the first term, we use  Lemma \ref{lebesgue_decomp} to see that $\pi b(a_i) \le \nu_i(t)$, which allows us bound
\begin{equation}
\nu_i  - \frac{\pi^2 b^2(a_i)}{\nu_i} =(\nu_i - \pi b(a_i)) \frac{\nu_i + \pi b(a_i)}{\nu_i}  \le 2 ( \nu_i - \pi b(a_i)).
\end{equation}
Hence
\begin{equation}\label{n_e_5}
\alpha \int_0^t \sum_{i=1}^n \abs{\dot{a}_i}^2\left( \nu_i  - \frac{\pi^2 b^2(a_i )}{\nu_i}\right) 
\le 2 \alpha \int_0^t M \sum_{i=1}^n ( \nu_i - \pi b(a_i)).
\end{equation}
For the second term we use Cauchy's inequality for $\abs{\dot{a}_i \cdot \nab \log{b(a_i)}} \le C(\pnorm{\nab \log b}{\infty}^2 + M) \le C(1+M)$, which holds for each $i=1,\dotsc,n$ with $C$ independent of time because $b$ is smooth and bounded below by \eqref{b_lower_bound}.  This yields the bound
\begin{equation}\label{n_e_6}
\int_0^t \sum_{i=1}^n  (\dot{a}_i \cdot  \nab \log b(a_i))(\nu_i  - \pi b(a_i)) \le  \int_0^t C(1+M) \sum_{i=1}^n (\nu_i  - \pi b(a_i)).
\end{equation}
To estimate the third term we will use Proposition \ref{Si_T_estimate} with $Y_i = \dot{a}_i \in L^2([0,T_*];\Rn{2})$ to deduce the bound
\begin{equation}\label{n_e_7}
 \int_0^t \sum_{i=1}^n \dot{a}_i \cdot S_i \le \frac{1}{2} \int_0^t \int_\Omega \zeta_0 + \int_0^t C(1+ M) \int_\Omega \nu_0.
\end{equation}

Now we sum the estimates \eqref{n_e_5}, \eqref{n_e_6}, and \eqref{n_e_7} and replace in \eqref{n_e_4} to find that
\begin{multline}
 \int_\Omega \nu_0(t) + \sum_{i=1}^n (\nu_i(t) - \pi b(a_i(t))) + \hal \int_0^t \int_\Omega \zeta_0  \\
\le   \int_0^t C(1+M) \left[\int_\Omega \nu_0 +   \sum_{i=1}^n (\nu_i  - \pi b(a_i)) \right].
\end{multline}
We may view  this as the differential inequality
\begin{equation}\label{n_e_10}
 \mathcal{F}(t) +   \mathcal{G}(t)   \le \int_0^t \mathcal{Q}(s) \mathcal{F}(s) ds
\end{equation}
with 
\begin{equation}
 \mathcal{F}(t) = \int_\Omega \nu_0(t) + \sum_{i=1}^n (\nu_i(t) - \pi b(a_i(t))) \ge 0,
\end{equation}
\begin{equation}
 \mathcal{G}(t) = \hal \int_0^t \int_\Omega \zeta_0 \ge 0, \text{ and } \mathcal{Q}(t) = C(1 + M(t)) \ge 0.
\end{equation}
Note that since $a_i \in H^1([0,T_*];\Omega)$, we have that $M$ and $\mathcal{Q}$ are in $L^1([0,T_*])$.  We may then use Gronwall's inequality on \eqref{n_e_10} to see that
\begin{equation}
 \int_0^t \mathcal{Q}(s) \mathcal{F}(s) ds \le \left( \int_0^0 \mathcal{Q}(s) \mathcal{F}(s) ds \right) \exp\left( \int_0^t \mathcal{Q}(s) ds  \right) =0,
\end{equation}
and hence that
\begin{equation}\label{n_e_11}
 \mathcal{F}(t) +   \mathcal{G}(t)  \le 0.
\end{equation}
Since $\mathcal{F}, \mathcal{G} \ge 0$, we immediately see that
\begin{equation}\label{n_e_12}
 \nu_0(t) =0 \text{ and }  \sum_{i=1}^n (\nu_i(t) - \pi b(a_i(t)))  = 0 \text{ for all } t\in [0,T_*],
\end{equation}
the latter of which implies that $\nu_i(t) = \pi b(a_i(t))$ for $i=1,\dotsc,n$  since $\nu_i(t) \ge \pi b(a_i(t))$.  The estimate \eqref{n_e_0} follows from \eqref{n_e_12} and the definition of $\nu$.   We may also deduce from \eqref{n_e_11} that that $\zeta_0 =0$.  
\end{proof}

Since $\nu_0(t)dt$ and $\zeta_0$ vanish, we find that other terms vanish as well.

\begin{cor}\label{other_vanish}
 We have that $T_0 =0$, $S_0 = 0$, $p_0 = 0$, and $\sigma \curl{B_*} \dt B_*^\bot =0$.
\end{cor}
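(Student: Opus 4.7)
The plan is to read off each vanishing directly from the estimates already in hand, using the fact that Proposition \ref{nu_equality} has just established $\nu_0(t)=0$ for all $t \in [0,T_*]$ and $\zeta_0 = 0$ as measures on $\Omega \times [0,T_*]$.

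First I would dispatch $T_0$. By \eqref{dece_01} of Lemma \ref{decomp_estimates} we have the pointwise bound $\abs{T_0} \le C\,\nu_0(t)\,dt$ as measures, so the vanishing of $\nu_0$ forces $T_0 = 0$. Next, for $p_0$ and $\sigma \curl{B_*}\,\dt B_*^\bot$, I would apply \eqref{dece_02} and \eqref{dece_03} of the same lemma with any admissible positive $\eta \in C^0([0,T_*];\Rn{})$, giving
\begin{equation*}
\abs{p_0} \le \eta\,\frac{\zeta_0}{2\alpha} + \frac{\nu_0(t)\,dt}{\eta}, \qquad \abs{\sigma \curl{B_*}\,\dt B_*^\bot} \le \eta\,\frac{\zeta_0}{2} + \frac{\sigma\,\nu_0(t)\,dt}{\eta}.
\end{equation*}
Since both $\nu_0$ and $\zeta_0$ vanish by Proposition \ref{nu_equality}, the right-hand sides of both inequalities are zero, hence $p_0 = 0$ and $\sigma \curl{B_*}\,\dt B_*^\bot = 0$.

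Finally, for $S_0$, I would plug these vanishings into the explicit formula \eqref{t_d_03} from Lemma \ref{tensor_diverge},
\begin{equation*}
S_0 = \alpha p_0 - \Bigl( \nu_0 - \tfrac{\text{tr}(T_0)}{2} \Bigr)\nab \log b - \sigma \curl{B_*}\,\dt B_*^\bot,
\end{equation*}
every term of which is now zero, yielding $S_0 = 0$.

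There is no real obstacle here: the entire corollary is a corollary in the strictest sense, extracting consequences already packaged in Lemmas \ref{decomp_estimates} and \ref{tensor_diverge} once the key Gronwall argument of Proposition \ref{nu_equality} has eliminated the two ``bad'' singular-part measures $\nu_0$ and $\zeta_0$. The only thing to verify is that the admissibility condition $\eta > 0$ in Lemma \ref{decomp_estimates} can be maintained while taking the estimates, which is immediate since we may pick, say, $\eta \equiv 1$.
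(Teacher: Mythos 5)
Your proof is correct and follows essentially the same route as the paper: once Proposition \ref{nu_equality} kills $\nu_0$ and $\zeta_0$, the vanishing of $T_0$, $p_0$, and $\sigma \curl{B_*}\,\dt B_*^\bot$ is read off the estimates of Lemma \ref{decomp_estimates}. The only cosmetic difference is your treatment of $S_0$: the paper invokes Lemma \ref{S0_estimate} (which bounds $\abs{S_0}$ by $\nu_0$ and $\zeta_0$), whereas you substitute the already-established vanishings directly into the explicit formula \eqref{t_d_03}; both are valid and amount to the same calculation, since Lemma \ref{S0_estimate} is itself derived from \eqref{t_d_03} and Lemma \ref{decomp_estimates}.
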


\begin{proof}
Since $\nu_0(t)dt =0$ and $\zeta_0 =0$, the vanishing of these terms follows immediately from the estimates of Lemmas \ref{decomp_estimates} and \ref{S0_estimate}.
\end{proof}

 We can use the vanishing of $T_0$, $S_0$ and  $T_i$ for $i=1,\dotsc,n$ to show that $S_i=0$ for $i=1,\dotsc,n$.  This in turn allows us to compute the dynamical law for $a_i$ and complete the proof of Theorem \ref{dynamics-intro}.

\begin{prop}\label{dynamics} 
The vortex trajectories satisfy $a_i \in C^{1}([0,T_*];\Omega)$ for $i=1,\dotsc,n$.  Moreover, they obey the dynamical law
\begin{equation}\label{dynamics_0}
\begin{split}
\a\dot{a}_i + d_i \beta \dot{a}_i^\bot  &= -2 d_i (\nab^\bot \fb(a_i) - \xb^\bot(a_i)) - \nab \log b(a_i) \\
& = -2 d_i (\nab^\bot \fb(a_i) + \nab \xib (a_i)) - \nab \log b(a_i) \\
& =  -2 d_i \left(\frac{ \sigma \nab^\bot \phib(a_i) + \nab \hb(a_i)  }{b(a_i)} \right)- \nab \log b(a_i).
\end{split}
\end{equation} 
\end{prop}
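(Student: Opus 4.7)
The plan is to combine the vanishing results that have been accumulated in Propositions 6.2, 6.3 and Corollary 6.3, together with the formula for $S_i$ from Lemma 5.8, to extract the dynamical law algebraically; the $C^1$ regularity then follows from a posteriori continuity of the right-hand side and invertibility of the linear operator $\alpha I + d_i \beta J$ acting on $\dot{a}_i$.

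\textbf{Step 1: Show $S_i = 0$ for each $i$.} Proposition \ref{tensor_structure} gives $T_i = 0$ for $i=1,\dotsc,n$, while Corollary \ref{other_vanish} gives $T_0 = 0$. Combining these with the decomposition \eqref{ld_02}, the whole tensor measure $T$ vanishes; hence $\diverge T = 0$ as a distribution on $\Omega \times [0,T_*]$. On the other hand, the decomposition \eqref{t_d_05} of $\diverge T$ together with $S_0 = 0$ (again from Corollary \ref{other_vanish}) yields $\sum_{i=1}^n S_i(t) \delta_{a_i(t)}\, dt = 0$ as a measure on $\Omega \times [0,T_*]$. Since the trajectories $a_1(t),\dotsc,a_n(t)$ are separated by at least $\gamma_*$ uniformly in $t \in [0,T_*]$ by Lemma \ref{vortex_path}, the measures $S_i(t)\delta_{a_i(t)}\,dt$ are mutually singular, which forces each $S_i(t) = 0$ for a.e.\ $t \in [0,T_*]$.

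\textbf{Step 2: Extract the dynamical law.} Inserting $T_i = 0$, $\nu_i(t) = \pi b(a_i(t))$ (from Proposition \ref{nu_equality}), and $p_i(t) = -\nu_i(t)\dot{a}_i(t) = -\pi b(a_i(t))\dot{a}_i(t)$ (from Proposition \ref{velocities}) into the formula \eqref{t_d_04} for $S_i$, the identity $S_i = 0$ reads
\begin{equation}
0 = -\alpha\, \pi b(a_i)\dot{a}_i - \beta \pi d_i b(a_i)\dot{a}_i^\bot - 2\pi d_i b(a_i) Z^\bot(a_i) - \pi b(a_i)\nab\log b(a_i).
\end{equation}
Dividing by $\pi b(a_i) > 0$ (using \eqref{b_lower_bound}) and recalling $Z = \nab \fb - \xb$ yields the first form of \eqref{dynamics_0}. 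The second form follows from $\xb = \nab^\bot \xib$ together with the identity $(\nab^\bot \xib)^\bot = -\nab \xib$, and the third form follows from the identity $bZ = \sigma \nab \phib - \nab^\bot \curl \xb$ of Lemma \ref{X_f_relation}, combined with $\curl \xb = \hb$ and $(\nab^\bot \hb)^\bot = -\nab \hb$.

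\textbf{Step 3: Upgrade regularity from $H^1$ to $C^1$.} The matrix $M_i := \alpha I + d_i \beta J$ with $J$ the rotation by $\pi/2$ has determinant $\alpha^2 + \beta^2 > 0$ and so is invertible; therefore the dynamical law \eqref{dynamics_0} can be inverted to give
\begin{equation}
\dot{a}_i(t) = M_i^{-1}\!\left[-2 d_i Z^\bot(a_i(t)) - \nab \log b(a_i(t))\right].
\end{equation}
The right-hand side is a continuous function of $a_i(t)$ because $Z \in C^2(\bar\Omega;\Rn{2})$ by Remark \ref{Z_smooth} and $b$ is smooth and bounded below. Since Lemma \ref{vortex_path} already gives $a_i \in H^1([0,T_*];\Omega) \hookrightarrow C^{0,1/2}$, the composition $t \mapsto a_i(t)$ and hence $t \mapsto \dot{a}_i(t)$ is continuous, so $a_i \in C^1([0,T_*];\Omega)$.

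The only step with any subtlety is Step 1, specifically the disintegration argument that passes from $\sum_i S_i(t)\delta_{a_i(t)}\,dt = 0$ to $S_i(t) = 0$ a.e.; this is routine given the uniform separation of the trajectories, which was ensured by the choice of $T_*$ in Lemma \ref{vortex_path}. Everything else is straightforward substitution of previously established identities.
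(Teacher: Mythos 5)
Your proof is correct and follows essentially the same route as the paper's: combine $T_0=T_i=0$ and $S_0=0$ (Proposition \ref{tensor_structure}, Corollary \ref{other_vanish}) with the two decompositions of $\diverge T$ to conclude $S_i=0$, substitute $\nu_i=\pi b(a_i)$ and $p_i=-\nu_i\dot a_i$ into \eqref{t_d_04}, then invert the linear operator $\alpha I + d_i\beta J$ and use smoothness of $Z$, $b$ together with $a_i\in C^{0,1/2}$ to upgrade to $C^1$ via the absolutely-continuous integral identity. (Minor point: some of your numeric references are off — the formula for $S_i$ is in Lemma \ref{tensor_diverge}, not Lemma \ref{S0_estimate}, and the $\nu_i$ identification is Proposition \ref{nu_equality} — but the mathematical content is right.)
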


\begin{proof}

According to Lemma \ref{tensor_diverge} and Corollary \ref{other_vanish}, we have that
\begin{equation}
 \diverge{T} = S_0 + \sum_{i=1}^n S_i(t) \delta_{a_i(t)}dt = \sum_{i=1}^n S_i(t) \delta_{a_i(t)}dt.
\end{equation}
On the other hand, Lemma \ref{lebesgue_decomp}, Proposition \ref{tensor_structure}, and Corollary \ref{other_vanish} imply that
\begin{equation}
 \diverge{T} = \diverge\left( T_0 + \sum_{i=1}^n  T_i(t) \delta_{a_i(t)}dt \right) = 0.
\end{equation}
Equating these two, we find that
\begin{equation}
  \sum_{i=1}^n S_i(t) \delta_{a_i(t)} dt=0,
\end{equation}
and hence  $S_i=0$ for a.e. $t\in[0,T_*]$.  We use $S_i=0$ in the equation for $S_i$ given in \eqref{t_d_04} of Lemma \ref{tensor_diverge}, and then we substitute in the values of $p_i$ and $\nu_i$ given in Propositions \ref{velocities} and  \ref{nu_equality} to deduce that for a.e. $t \in [0,T_*]$, 
\begin{equation}\label{dynamics_1}
 \a\dot{a}_i + d_i \beta \dot{a}_i^\bot  = -2 d_i Z^\bot(a_i)  - \nab \log b(a_i).
\end{equation}

We may solve \eqref{dynamics_1} for $\dot{a}_i$ to see that \eqref{dot_solved} holds.  According to Remark \ref{Z_smooth}, the vector field $Z$ is smooth and bounded, while \eqref{b_lower_bound} implies that $\nab \log(b)$ is smooth and bounded; then  since $a_i \in C^{0,1/2}$, the right side of \eqref{dot_solved} is a continuous function of $t$ in $[0,T_*]$.  Plugging \eqref{dot_solved} into the equation
\begin{equation}
 a_i(t) = a_i(0) + \int_0^t \dot{a}_i(s) ds,
\end{equation}
which follows from the absolute continuity of $a_i$ (again, cf. Theorem 4.9.1/1 of \cite{ev_gar}), we see that $a_i(t)-a_i(0)$ is the integral of a continuous function and is thus classically differentiable.  Moreover, \eqref{dot_solved} implies  that $\pnorm{\dot{a}_i}{\infty} \le C$ so that $a_i \in C^{1}([0,T_*];\Omega)$.  

The first equation in \eqref{dynamics_0} follows from \eqref{dynamics_1} by using $Z= \nab \fb - \xb$, which is the definition of $Z$ given in Lemma \ref{specific_reformulation}.  The second equality in \eqref{dynamics_0} follows from the definition of $\xb$ \eqref{X_def} and equation \eqref{propeX}.  The third follows from  Lemma \ref{X_f_relation}.

\end{proof}

Although we have only worked on the interval $[0,T_*]$, the dynamics can be extended  until a collision occurs. Indeed, Proposition \ref{nu_equality} says that
\begin{equation}
 \tilde{F}_\ep(v_\ep, B_\ep) (T_*) \le \pi\sum_{i=1}^n b(a_i(T_*)) \ale + o(\ale).
\end{equation}
We can then run all of the above analysis again, starting from time $t=T_*$.  The only obstacle to running this iteration forever is the possibility of a vortex collision or a vortex exiting the domain. Hence the maximal time of validity of the theorem is the first occurrence of such a collision or an exit under the law \eqref{dynalaw}. Theorem \ref{dynamics-intro} is proved.

The proof of Theorem \ref{th2} can be obtained following the same steps (things are actually made simpler by the absence of the gauge $B$) according to the sketch given in Section \ref{glsimple}.
Details are left to the reader.

\section*{Acknowledgments}

We would like to thank an anonymous referee for their careful reading and for pointing out a flaw in one of the arguments used in the first draft of this paper.

\pagebreak

\vskip 1cm
\noindent
{\sc Sylvia Serfaty}\\
UPMC Univ. Paris 06, UMR 7598 Laboratoire Jacques-Louis Lions,\\
 Paris, F-75005 France ;\\
 CNRS, UMR 7598 LJLL, Paris, F-75005 France \\ 
 \&  Courant Institute, New York University\\
251 Mercer St., New York, NY  10012, USA\\
{\tt serfaty@ann.jussieu.fr}
\vskip 1cm
\noindent
{\sc Ian Tice}\\
Brown University, Division of Applied Mathematics\\
 182 George St., Providence, RI 02912, USA \\ 
{\tt tice@dam.brown.edu}


\begin{thebibliography}{99}
\bibitem[ASS]{ass} A. Aftalion, E. Sandier, S.  Serfaty. Pinning phenomena in the Ginzburg-Landau model of superconductivity.  \emph{J. Math. Pures Appl. (9) } {\bf 80}  (2001),  no. 3, 339--372.

\bibitem[Al]{almog} Y. Almog. The stability of the normal state of superconductors in the presence of electric currents.  {\it SIAM J. Math. Anal. } {\bf 40}   (2008),  no. 2, 824--850. 






\bibitem[BBH]{bbh} F. Bethuel, H. Brezis, F. H\'{e}lein. \textit{Ginzburg-Landau Vortices}.  Birkha\"user, Boston, 1994.

\bibitem[BOS1]{bos1} F. Bethuel, G. Orlandi, D. Smets. Collisions and phase-vortex interactions in dissipative Ginzburg-Landau dynamics.  \emph{Duke Math. J.} \textbf{130} (2005), no. 3, 523--614.


\bibitem[BOS2]{bos2} F. Bethuel, G. Orlandi, D. Smets.  Quantization and motion law for Ginzburg-Landau vortices.  \emph{Arch. Ration. Mech. Anal.} {\bf 183} (2007), no. 2, 315--370.

\bibitem[BFGLV]{vinokur} G. Blatter, M. V. Feigel'man, V. B.  Geshkenbein, A. I. Larkin, V. Vinokur.  Vortices in high-temperature superconductors.  {\it  Rev. Modern Physics} {\bf 66} (1994). 


\bibitem[CE]{campbell} A. M. Campbell, J. E. Evetts. {\it  Critical Currents in Superconductors}. 
Taylor \& Francis, London, 1972.


\bibitem[CH]{chap_her}
S. J. Chapman, D. R. Heron. A hierarchy of models for superconducting thin films.  \emph{SIAM J. Appl. Math.} {\bf 63} (2003), no. 6, 2087--2127.


\bibitem[CR]{cr} 
S. J. Chapman, G. Richardson. Vortex pinning by inhomogeneities in type-II superconductors. {\it  Phys. D}  {\bf 108} (1997), 397--407. 


\bibitem[Do]{dorsey} A. Dorsey. Vortex motion and the Hall effect in type II superconductors: a time-dependent Ginzburg-Landau approach. \emph{Phys. Rev. B} {\bf 46} (1992), 8376--8392.


\bibitem[Du1]{du_2} Q. Du. Diverse vortex dynamics in superfluids.  Current trends in scientific computing (Xi'an, 2002), 105--117, \emph{Contemp. Math.} {\bf 329}, Amer. Math. Soc., Providence, RI, 2003. 

\bibitem[Du2]{du_3} Q. Du. Numerical approximations of the Ginzburg-Landau models for superconductivity.  \emph{J. Math. Phys.} {\bf 46} (2005), no. 9, 095109, 22 pp.

\bibitem[DG]{du_gray} Q. Du, P. Gray. High-kappa limits of the time-dependent Ginzburg-Landau model. \emph{SIAM J. Appl. Math.} {\bf 56} (1996), no. 4, 1060--1093. 


\bibitem[EG]{ev_gar} L. C. Evans, R. Gariepy. \textit{Measure Theory and Fine Properties of Functions.}  Studies in Advanced Mathematics. CRC Press, Boca Raton, FL, 1992.



\bibitem[GE]{ge} L. P. Gor'kov, G. M. \'{E}liashberg. Generalization of the Ginzburg-Landau equations for non-stationary problems in the case of alloys with paramagnetic impurities. \emph{Sov. Phys. JETP} {\bf 27} (1968), 328--334.

\bibitem[JC]{jc} R. L. Jerrard, J. Colliander, Vortex dynamics for the Ginzburg-Landau-Schr\"{o}dinger equation. {\it Internat. Math. Res. Notices},   (1998),  no. 7, 333--358. 

\bibitem[JS]{js} R. L. Jerrard, H. M. Soner. Dynamics of Ginzburg-Landau vortices.  \emph{Arch. Ration. Mech. Anal.}  {\bf 142}  (1998), no. 2, 99--125.

\bibitem[JS2]{js_jacob} R. L. Jerrard, H. M. Soner.  The Jacobian and the Ginzburg-Landau energy. \emph{Calc. Var. Partial Differential Equations} {\bf 14} (2002), no. 2, 151--191.

\bibitem[JiSo] {jiangsong}
H-Y.  Jian, B-H.  Song.  Vortex dynamics of Ginzburg-Landau equations in inhomogeneous superconductors. \emph{J. Differential Equations}  {\bf 170}  (2001),  no. 1, 123--141.

\bibitem[KIK]{kopnin}
N. B. Kopnin, B. I. Ivlev, V. A.  Kalatsky. The flux-flow Hall effect in type II superconductors.
An explanation of the sign reversal. \emph{J. Low Temp. Phys.} {\bf 90} (1993),
1--13.

\bibitem[KMMS]{kurzkespirn}M. Kurzke, C.  Melcher, R.  Moser, D.  Spirn.
Dynamics for Ginzburg-Landau vortices under a mixed flow. \emph{Indiana Univ. Math. J.} {\bf 58} No. 6 (2009), 2597-2622.

\bibitem[KS]{kurzkespirngauge} M. Kurzke, D. Spirn.  $\Gamma$-stability and vortex motion in type-II superconductors.  Preprint, 2010.


\bibitem[Li1]{lin_1} F. H. Lin. Some dynamical properties of Ginzburg-Landau vortices.  \emph{Comm. Pure Appl. Math.} {\bf 49} (1996), 323--359.

\bibitem[Li2]{lin_2} F. H. Lin. Complex Ginzburg-Landau equations and dynamics of vortices, filaments, and codimension-$2$ submanifolds. \emph{Comm. Pure Appl. Math.} {\bf 51} (1998), no. 4, 385--441. 

\bibitem[LM]{lions_magenes} J.-L. Lions, E.  Magenes. Problemi ai limiti non omogenei. V. \emph{Ann. Scuola Norm Sup. Pisa (3)} {\bf 16} (1962), 1--44. 

\bibitem[Mi]{miot}E. Miot. Dynamics of vortices for the complex Ginzburg-Landau equation. \emph{ Analysis and PDE} {\bf 2} (2009), No. 2, 159-186. 


\bibitem[RSZ]{rub_stern_zum} J. Rubinstein, P. Sternberg, K. Zumbrun. The resistive state in a superconducting wire: bifurcation from the normal state.  \emph{Arch. Ration. Mech. Anal.}  {\bf 195}  (2010), no. 1, 117--158.

\bibitem[SS1]{ss_prod} E. Sandier, S. Serfaty.  A product-estimate for Ginzburg-Landau and corollaries. \emph{J. Funct. Anal.} \textbf{211} (2004), no. 1, 219--244. 

\bibitem[SS2]{ss_gamma} E. Sandier, S. Serfaty. Gamma-convergence of gradient flows with applications to Ginzburg-Landau.  \emph{Comm. Pure Appl. Math.} \textbf{57} (2004), no. 12, 1627--1672. 

\bibitem[SS3]{ss_book} E. Sandier, S. Serfaty.  \emph{Vortices in the Magnetic Ginzburg-Landau Model}. Birkh\"{a}user, Boston, 2007.


\bibitem[Se1]{serf_1} S. Serfaty. Vortex collisions and energy-dissipation rates in the Ginzburg-Landau heat flow. I. Study of the perturbed Ginzburg-Landau equation. \emph{J. Eur. Math. Soc.} \textbf{9} (2007), no. 2, 177--217. 

\bibitem[Se2]{serf_2} S. Serfaty. Vortex collisions and energy-dissipation rates in the Ginzburg-Landau heat flow. II. The dynamics. \emph{J. Eur. Math. Soc.} \textbf{9} (2007), no. 3, 383--426.

\bibitem[Si]{simon} J. Simon. Compact sets in the space $L\sp p(0,T;B)$. \emph{Ann. Mat. Pura Appl. (4)} \textbf{146} (1987), 65--96.

\bibitem[Sp]{spirn} D. Spirn. Vortex dynamics of the full time-dependent Ginzburg-Landau equations. \emph{Comm. Pure Appl. Math.} {\bf 55} (2002), no. 5, 537--581. 


\bibitem[Ti]{tice_2} I. Tice. Ginzburg-Landau vortex dynamics driven by an applied boundary current. \emph{Comm. Pure Appl. Math.} {\bf 63} (2010), no. 12, 1622--1676.

\bibitem[Tn]{tinkham} M.
Tinkham.
  \emph{Introduction to superconductivity}. Second edition.
   McGraw-Hill, New York, 1996.

\bibitem[Tr]{triebel} H. Triebel. \emph{Interpolation theory, function spaces, differential operators.  Second edition.}  Johann Ambrosius Barth, Heidelberg, 1995.


\end{thebibliography}
\end{document}